\newcommand{\tf}{{\tilde f}}
\newcommand{\tth}{{\tilde h}}
\newcommand{\tg}{{\tilde g}}
\newcommand{\tW}{{\widetilde W}}
\newcommand{\thF}{{\widetilde {\mathcal {F}}}}
\newcommand{\tP}{{\mathcal P}}
\newcommand{\tA}{{\mathcal A}}
\newcommand{\tpi}{{\tilde \pi}}
\newcommand{\tS}{{\tilde S}}
\newcommand{\Diff}{\mathrm{Diff}^1}
\newcommand{\td}{\mathrm{d}}
\newtheorem*{maintheorem}{Main Theorem}
\def\vep{\varepsilon}
\def\cF{{\mathcal F}}
\def\supp{\operatorname{supp}}
\def\diam{\operatorname{diam}}
\title
[$C^1$ entropy formula part 1]
{
    Ledrappier-Young entropy formula for $C^1$ diffeomorphisms with dominated splitting \\
    Part 1: unstable entropy formula and invariance principle
}
\begin{document}
\begin{abstract}
We study the unstable entropy of $C^1$ diffeomorphisms with dominated splittings. Our main result shows that when the zero Lyapunov exponent has multiplicity one, the center direction contributes no entropy, and the unstable entropy coincides with the metric entropy. This extends the celebrated work of Ledrappier–Young \cite{LEDRAPPIER_YOUNG_A} for $C^2$ diffeomorphisms to the $C^1$ setting under these assumptions. In particular, our results apply to $C^1$ diffeomorphisms away from homoclinic tangencies due to \cite{LVY13}.

As consequences, we obtain several applications at $C^1$ regularity. The Avila–Viana invariance principle \cite{crovisier2023invarianceprinciplenoncompactcenter, TY} holds when the center is one-dimensional. Results on measures of maximal entropy due to Hertz–Hertz–Tahzibi–Ures \cite{HHTU}, Tahzibi–Yang \cite{TY}, and Ures–Viana–Yang–Yang \cite{UVYY2025, UVYY2024} also remain valid for $C^1$ diffeomorphisms.

\end{abstract}
\maketitle

\tableofcontents

\section{Introduction and main results}
\subsection{Introduction}
The notion of metric entropy, introduced by Kolmogorov and Sinai in the 1950s, provides a way to quantify the complexity of chaotic dynamical systems. In the 1960s, Oseledets’ theorem \cite{Oseledets} introduced Lyapunov exponents, another fundamental tool in smooth ergodic theory. These two concepts are deeply connected: the inequality of Margulis and Ruelle, see \cite{1978Ruelle-inequality}, shows that the metric entropy of an invariant measure for diffeomorphisms is upper bounded by the sum of the positive Lyapunov exponents, counting multiplicity; moreover, for $C^{1+\mathrm{H\"{o}lder}}$ volume-preserving diffeomorphisms, the Pesin's entropy formula \cite{Pesin-entropyformula_1977} establishes equality, namely the entropy of the volume measure is equal to the sum of its positive Lyapunov exponents.

In a series of remarkable works \cite{Ledrappier_Strelcyn_1982}, \cite{Ledrappier1984}, and \cite{LEDRAPPIER_YOUNG_A} in the 1980s, it was proven that for $C^2$ diffeomorphisms, the Pesin entropy formula holds if and only if the measure admits absolutely continuous conditional measures along Pesin's unstable manifolds. To achieve this, the authors of these papers introduced the concept of unstable entropy, i.e., entropy along Pesin unstable manifolds, and the main step in \cite{LEDRAPPIER_YOUNG_A} is to prove that the unstable entropy is equal to the metric entropy for any $C^2$ diffeomorphisms. 

The assumption of $C^2$ regularity is essential in the classical results described above. The key reason is that the holonomy maps induced by unstable foliations within center–unstable sets are Lipschitz.
This Lipschitz property also holds for $C^{1+\mathrm{H\"{o}lder}}$ diffeomorphisms, as shown in recent works of Brown in \cite{AaronBrown2022} and Saghin in \cite{SaghinRadu2025}, thereby extending the Ledrappier-Young theory to the $C^{1+\mathrm{H\"{o}lder}}$ setting.

In contrast, $C^1$ diffeomorphisms may fail to admit stable manifolds \cite{Pugh1984, BCS2013}. Nevertheless, several results \cite{Gan2002, Tahzibi_2002, SunTianC1pesinformula, ABC} suggest the possibility of a parallel theory for ``$C^1$ + domination,'' analogous to the $C^{1+\mathrm{H\"{o}lder}}$ or $C^2$ cases.
Along this direction, a central and challenging problem is to establish the Ledrappier–Young entropy formula for $C^1$ diffeomorphisms with dominated splitting. In this setting, the unstable holonomy maps inside center–unstable sets are generally not Lipschitz (rather they are H\"older). In this paper, we provide a partial positive answer to this problem in the case where the center bundle is one-dimensional.

We now briefly describe the framework. Let $M$ be a compact $C^{\infty}$ Riemannian manifold without boundary, and let $f \in \Diff(M)$ be a $C^1$ diffeomorphism. Denote by $m$ an $f$-invariant ergodic Borel probability measure on $M$. 
The main result of this paper is that if the Oseledets splitting $ E^u \oplus  E^c \oplus E^s$ corresponding to the positive, zero and negative exponents bundles is a dominated splitting on the support of $m$, and if the zero exponent has multiplicity one, then the center contributes no entropy.
\begin{maintheorem}[Section 1.2, \Cref{main thm}]
Let $f\in \Diff(M)$ preserve an ergodic Borel probability $m$. If the Oseledets splitting $ E^u \oplus  E^c \oplus  E^s $ is dominated on the support of $m$ and $\dim  E^c \leq1$, then we have $h^u_m(f)=h_m(f)$, where $h_m(f)$ denotes the metric entropy and $h_m^u(f)$ the unstable entropy.
\end{maintheorem}
See \Cref{section 1.2} for precise definitions. In particular, the assumptions of the Main Theorem hold for any ergodic measure of diffeomorphisms away from homoclinic tangencies. See \Cref{section 1.3} for details. 

We now turn to the invariance principle, first established by Ledrappier in \cite{Ledrappier_86} for linear cocycles and later extended to the general setting by Avila and Viana in \cite{AV}. It is a powerful tool in smooth ergodic theory with numerous important applications. 
Although not explicitly stated in Avila-Viana's work, the Lipschitz regularity for the unstable holonomy maps inside center-stable manifolds (as in the Ledrappier-Young's theory) is in fact a necessary assumption.  

As an application of our Main Theorem, we show that the Avila–Viana invariance principle continues to hold in the $C^1$ category when the center is one-dimensional.
Indeed, Tahzibi and Yang \cite[Corollary 2.1]{TY} proved that a measure is $u$-invariance if and only if its entropy along unstable foliation coincides with the entropy of the quotient measure along the unstable foliation of the quotient space.
From this perspective, the invariance principle follows from our Main Theorem, which ensures that zero Lyapunov exponents contribute no entropy. Consequently, several recent works that rely on the invariance principle also remain valid for $C^1$ diffeomorphisms, including \cite{HHTU}, \cite{TY}, \cite{UVY}, \cite{UVYY2024}, and \cite{UVYY2025}; see \Cref{section6} for details.

Finally,we emphasize that the Lipschitz regularity of holonomy maps is a fundamental assumption for many remarkable results in smooth ergodic theory, such as \cite{BW2010}. 
In future works, we aim to relax the Lipschitz regularity to Hölder regularity in some important cases. In particular, in Part 2 of this paper, 
we will establish a $C^1$ version of the Ledrappier-Young entropy formula from their seminal work \cite{LEDRAPPIER_YOUNG_B}.

\smallskip 
\textbf{Strategy of proof:}
We now outline the strategy used to overcome the lack of regularity.

In the $C^1$ setting, the Lyapunov charts in \cite{LEDRAPPIER_YOUNG_A} are no longer available, since the sub-exponential variation on the size of the charts under iteration depends heavily on the $C^{1+\mathrm{H\"{o}lder}}$ regularity. 
Instead, the presence of a dominated splitting allows us to construct fake foliations in local chart neighborhoods. Moreover, the results of Abdenur, Bonatti and Crovisier in \cite{ABC} enables the construction of fake foliation charts, see \Cref{fake foliation charts def}, to replace the Lyapunov charts. It is worth noting that the size of the fake foliation charts are non-uniform, scaling as the reciprocal of the hyperbolicity function of the base point (\Cref{ABC ergodic lemma}). Although these charts lack the one-step length estimates as for Lyapunov charts in \cite[Section 2.1]{LEDRAPPIER_YOUNG_A}, they still provide sufficient asymptotic length control (\Cref{local argument}), which is sufficient for our purpose.

To address the absence of Lipschitz regularity for unstable holonomies, we first show that the fake unstable holonomies inside the fake center-unstable leaves are bi-H\"older continuous, with H\"older exponent arbitrarily close to one; see \Cref{uniform Hölder}. While this fact seems to be well-known, we are not aware of an explicit reference for our precise setting. Next, we carefully modify the convergence argument in \cite[Section 5]{LEDRAPPIER_YOUNG_A}. Instead of centered balls, we utilize the uncentered maximal function, where the balls are allowed to shift only along $E^c$. The one-dimensionality of $E^c$ is crucial here, as it enables us to establish the required convergence arguments for uncentered balls (\Cref{section 4}, \Cref{section 5.3}), thereby recovering the rest of the proof in \cite[Section 5]{LEDRAPPIER_YOUNG_A}.

\smallskip
\textbf{Organization:}
Section 2 contains the construction of the fake foliation charts and provides the necessary geometric preparations, including the H\"older continuity of fake unstable holonomies inside fake center–unstable leaves, as well as properties of unstable manifolds and center–unstable sets.
Section 3 introduces the notion of unstable entropy for $C^1$ diffeomorphisms with dominated splitting, in parallel with \cite{LEDRAPPIER_YOUNG_B}, and establishes the quotient structure of two special partitions. In Section 4 we present the uncentered maximal function, where the one-dimensional assumption plays a crucial role. In Section 5 we complete the proof of the Main Theorem. Finally, in Section \ref{section6} we state and prove the $C^1$ invariance principle, namely \Cref{thm invprin}.  

\subsection{Main results}\label{section 1.2}
In this section, we provide the technical definitions and state the main results of this paper.
As before, $f$ is a $C^1$ diffeomorphism on a compact Riemannian manifold $M$ without boundary, and $m$ is an $f$-invariant ergodic Borel probability measure.

A point $x\in M$ is said to be a regular point if there exist numbers $\lambda_1(x)>\lambda_2(x)>\cdots >\lambda_{r(x)}(x)$ and a decomposition of the tangent space $$T_xM=E_1(x)\oplus E_2(x)\oplus \cdots \oplus E_{r(x)}(x)$$ such that for each $v\neq 0\in E_i(x)$, we have $$\lim_{n\rightarrow \pm\infty} \frac{1}{n}\log | Df^n_x v|=\lambda_i(x),$$ and $$\lim_{n\rightarrow \pm \infty}\frac{1}{n}\log |\mathrm{Jac}(Df^n_x)|=\sum^{r(x)}_{i=1}\lambda_i(x)\dim E_i(x).$$
By the Oseledets theorem \cite{Oseledets}, the set of regular points, denoted as $\Gamma$, is a set of full $m$ measure and $\lambda_i(x),r(x)$ and $\dim E_i(x)$ are all $f$-invariant measurable functions. When $m$ is ergodic, for $m$-a.e.\ $x\in M$,
$\lambda_i(x)=\lambda_i$ are called the Lyapunov exponents of $f$, and $\dim E_i(x)=\dim E_i$ is called the multiplicity of $\lambda_i$. We denote the smallest positive exponent by $\lambda^u$ and the largest negative exponent by $\lambda^s$.

We recall some basic properties of dominated splittings (see \cite{ABC}). A $Df$-invariant splitting $T_\Lambda M=F\oplus E$ of the tangent bundle over an $f$-invariant set $\Lambda$ is dominated if there exists $N>0$ such that given any $x\in \Lambda$ and any unit vectors $v\in E(x),u\in F(x)$, we have $$| Df_x^N(v)|\leq\frac{1}{2}| Df_x^N(u)|.$$
More generally, a $Df$-invariant splitting $T_\Lambda M=E_1\oplus\cdots\oplus E_t$ is dominated if given any $\ell\in\{ 1,\cdots,t-1\}$ the splitting $(E_1\oplus\cdots\oplus E_\ell)\oplus(E_{\ell+1}\oplus\cdots\oplus E_t)$ is dominated. Recall that if an $f$ invariant set $\Lambda$ admits a dominated splitting $T_\Lambda M=E_1\oplus\cdots\oplus E_t$, then the splitting depends continuously on $x\in \Lambda$ and extends uniquely to a dominated splitting over the closure of $\Lambda$. Moreover, the angles between different bundles are uniformly bounded away from zero.

Define $ E^s (x)=\bigoplus_{\lambda_i<0} E_i(x)$, $ E^c (x)=\bigoplus_{\lambda_i=0} E_i(x)$ and $ E^u (x)=\bigoplus_{\lambda_i>0} E_i(x)$ for any $x\in\Gamma$. We say that the splitting $ E^u \oplus  E^c \oplus  E^s $ is dominated on the support of $m$, if $ E^u \oplus  E^c \oplus  E^s $ is a dominated splitting on the invariant set $\Gamma \cap \mathrm{supp}(m)$. In this case, the splitting can be uniquely extended to a dominated splitting on $\mathrm{supp}(m)$, since $\Gamma \cap \mathrm{supp}(m)$ is dense inside $\mathrm{supp}(m)$.

For any $x\in \Gamma\cap \mathrm{supp}(m)$ we define the (global) stable and unstable manifolds as $$W^s(x)=\left\{ y\in M: \limsup_{n\rightarrow +\infty}\frac{1}{n}\log d(f^nx,f^ny)<0 \right\}$$
and
$$W^u(x)=\left\{ y\in M: \limsup_{n\rightarrow +\infty}\frac{1}{n}\log d(f^{-n}x,f^{-n}y)<0 \right\},$$
where $d$ is the Riemannian metric on $M$. When $ E^u \oplus  E^c \oplus  E^s $ is dominated on the support of $m$, Abdenur, Bonatti, and Crovisier proved in \cite[Section 8]{ABC} that $W^u(x)$  (resp. $W^s(x)$) is an injectively immersed $C^1$ submanifold of dimension $\dim  E^u (x)$ (resp. $\dim  E^s $).

When $ E^u \oplus  E^c \oplus  E^s $ is dominated on the support of $m$, we define the $u$-entropy of $f$ following the ideas of \cite{LEDRAPPIER_YOUNG_A}, \cite{wuweisheng-u-entropy} and \cite{YANG_expanding_entropy}. We will use the concepts of measurable partitions, disintegration and entropy, for which the reader can find more details in \cite{Rokhlin_1967}. Similar to \cite{LEDRAPPIER_YOUNG_A}, we define the $u$-entropy through special partitions:

\begin{definition}[Subordinate partition]
Let $\xi$ be a measurable partition of $M$. We say $\xi$ is subordinate to $W^u$, if for $m$-a.e.\ $x$ we have:\\
a) $\xi(x)\subseteq  W^u(x) $ and $\xi(x)$ contains an open neighborhood of $x$ within $W^u(x)$.\\
b) $\xi$ is increasing, meaning that $f\xi \prec \xi$.\\
c) $\bigvee_{n=0}^\infty f^{-n}\xi$ is the partition into points. 
\end{definition}
The $u$-entropy, denoted by $h^u_m(f)$, is defined as the conditional entropy $h^u_m(f) :=  H(\xi|f\xi)$ for any $\xi$ subordinate to $W^u$. We will see that the $u$-entropy does not depend on the choice of $\xi$. More details will be given in \Cref{u-entropy}.

We now state our main result in this paper:
\begin{theorem}[Main Theorem]\label{main thm}
Let $f\in \Diff(M)$ preserve an ergodic Borel probability $m$. If the splitting $ E^u \oplus  E^c \oplus  E^s $ is dominated on the support of $m$ and $\dim  E^c \leq1$, then $h^u_m(f)=h_m(f)$, where $h_m(f)$ denotes the metric entropy of $f$.
\end{theorem}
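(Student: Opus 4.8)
The plan is to prove the two inequalities $h^u_m(f)\le h_m(f)$ and $h_m(f)\le h^u_m(f)$, following the scheme of \cite{LEDRAPPIER_YOUNG_A} with its $C^{1+\mathrm{H\"older}}$ ingredients replaced as outlined in the strategy above. The first inequality is soft: a partition $\xi$ subordinate to $W^u$ satisfies $f\xi\prec\xi$ and $\bigvee_{n\ge0}f^{-n}\xi$ is the partition into points, so by the theory of entropy of measurable partitions \cite{Rokhlin_1967} one has $H(\xi\mid f\xi)\le h_m(f)$; the same analysis shows that $H(\xi\mid f\xi)$ does not depend on the choice of $\xi$, as recorded in \Cref{u-entropy}. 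The content of the theorem is the reverse inequality.

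For $h_m(f)\le h^u_m(f)$ I would factor the comparison through the center--unstable set $W^{cu}$, tangent to $E^u\oplus E^c$. Using the $C^1$ manifolds of \cite[Section~8]{ABC} together with the fake foliation charts of \Cref{fake foliation charts def}, one constructs an increasing measurable partition $\eta$ whose atoms are open plaques of $W^{cu}$, coarser than a given $\xi$ subordinate to $W^u$, and such that $\xi$ disintegrates each $\eta$-atom into $W^u$-plaques; this quotient structure relating the two partitions is established in Section~3. Since $E^s\oplus E^c\oplus E^u=TM$ and the stable manifolds $W^s$ are genuinely contracted, the ``easy half'' gives $h_m(f)=H(\eta\mid f\eta)=:h^{cu}_m(f)$, as the contracted direction $W^s$ creates no orbit separation for $f$ and a fine finite partition adapted to $W^{cu}$ and $W^s$ is generating. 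It then remains to prove
\[
h^{cu}_m(f)\le h^u_m(f),\qquad\text{i.e.}\qquad H(\eta\mid f\eta)\le H(\xi\mid f\xi),
\]
which says exactly that the one-dimensional zero-exponent center carries no entropy; combined with $h^u_m(f)\le h_m(f)$ this closes the chain $h^u_m(f)\le h_m(f)=h^{cu}_m(f)\le h^u_m(f)$.

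To prove $H(\eta\mid f\eta)\le H(\xi\mid f\xi)$ I would transport the convergence argument of \cite[Section~5]{LEDRAPPIER_YOUNG_A} to the fake-chart setting. By the Shannon--McMillan--Breiman theorem these two quantities are the $m$-almost-everywhere exponential decay rates of $m^{cu}_x\!\big(f^{-n}\eta(f^nx)\big)$ and $m^u_x\!\big(f^{-n}\xi(f^nx)\big)$, where $m^{cu}_x$ and $m^u_x$ are the conditional measures of $m$ along $W^{cu}$ and $W^u$; it therefore suffices to bound the ratio of these two quantities from below at a subexponential rate. Disintegrating $m^{cu}_x$ over the $W^u$-subfoliation inside $\eta(x)$, this reduces to comparing the conditional measures of nearby unstable plaques under the fake unstable holonomy inside the fake center--unstable leaf, together with a Lebesgue-density and covering estimate transverse to $W^u$, i.e.\ along the center. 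The ingredients are: the bi-H\"older regularity of the fake unstable holonomies with exponent arbitrarily close to $1$ (\Cref{uniform Hölder}); the non-uniform but subexponentially varying size of the fake charts (\Cref{ABC ergodic lemma}), together with the asymptotic length control of \Cref{local argument}, which replace the one-step estimates available for Lyapunov charts in \cite{LEDRAPPIER_YOUNG_A}; and a covering argument along $E^c$. Because the holonomies are only H\"older, round balls in a center of dimension $\ge 2$ would be distorted too severely for a Besicovitch-type argument; but $\dim E^c\le 1$ lets one replace centered balls by the uncentered maximal function with balls sliding only along the one-dimensional $E^c$ (\Cref{section 4}, \Cref{section 5.3}), for which the required maximal inequality and almost-everywhere density statements do hold. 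Feeding this back gives the subexponential lower bound on the ratio, hence $H(\eta\mid f\eta)\le H(\xi\mid f\xi)$.

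The main obstacle is precisely this last step. When $\dim E^c=0$ the measure is hyperbolic, $W^{cu}=W^u$, and $h^{cu}_m(f)=h^u_m(f)$ trivially, so the theorem reduces to the ``easy half'' $h_m(f)=h^{cu}_m(f)$; all the genuinely new difficulty lies in the case $\dim E^c=1$, where one must run the Ledrappier--Young convergence argument across fake rather than Lyapunov charts and with merely H\"older rather than Lipschitz holonomies. Everything in Section~2 --- the fake foliation charts and their size control, the H\"older estimate \Cref{uniform Hölder}, and the geometry of $W^u$ and $W^{cu}$ --- is in service of making this step work, and the one-dimensionality of the center is what rescues the covering/maximal-function argument that would otherwise fail.
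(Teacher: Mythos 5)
Your proposal follows essentially the same route as the paper: fake foliation charts in place of Lyapunov charts, the Hölder estimate for fake $u$-holonomies inside fake $cu$-leaves, the quotient structure between an unstable-subordinate partition $\xi$ and a center-unstable partition, and the uncentered maximal function along the one-dimensional $E^c$ to replace the Besicovitch argument. The convergence argument sketched in your third paragraph is the heart of the matter, and your diagnosis of where $\dim E^c=1$ enters is correct.

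The one place where your outline diverges from what the paper actually does, and where you should be careful, is the ``easy half'' claim $h_m(f)=H(\eta\mid f\eta)=:h^{cu}_m(f)$. You present it as an exact equality for a partition $\eta$ whose atoms are open plaques of $W^{cu}$, justified by the heuristic that $W^s$ creates no forward orbit separation. In the $C^1$ setting there is no genuine $W^{cu}$-foliation to be subordinate to; the paper works with the center-unstable sets $S^{cu}_\delta(x)$, shown only to lie inside fake $cu$-leaves (Lemma \ref{lemma above}). More to the point, the paper never proves the exact identity $h_m(f)=h^{cu}_m(f)$, nor even that a well-defined quantity $h^{cu}_m$ exists independently of the subordinate partition. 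Instead it takes a finite-entropy adapted partition $\tP$ with $h_m(f,\tP)\geq h_m(f)-\epsilon$, sets $\eta_2=\tP^+$, uses $h_m(f,\eta_2)=h_m(f,\tP)$, and lets the $\epsilon$-error be absorbed in the final limit: the inequality obtained from the convergence argument reads
\[
\frac{3\epsilon}{\alpha}(s+1)\;\geq\;-(1-\epsilon)\bigl(h^u_m(f)+2\epsilon\bigr)+(1-\epsilon)\bigl(h_m(f)-\epsilon\bigr),
\]
and only after $\epsilon\to 0$ does one conclude $h^u_m(f)\geq h_m(f)$. So what your proof labels as two clean steps ($h_m(f)=h^{cu}_m(f)$, then $h^{cu}_m(f)\leq h^u_m(f)$) is in the paper a single $\epsilon$-quantified estimate. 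If you wanted to carry out your version literally you would have to supply a separate Ledrappier--Strelcyn--type argument for center-unstable partitions in the $C^1$ setting, which is avoidable and which the paper wisely avoids. Apart from this, the skeleton of your proposal is faithful to the paper's.
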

\begin{remark}
We only need to prove the theorem for $\dim  E^c =1$. In the case $\dim  E^c =0$, i.e., $m$ is an ergodic hyperbolic measure, the result is proven by Ures, Viana, Yang and Yang in \cite[Proposition A.4]{UVYY2024}. Although their result was proven for partially hyperbolic diffeomorphisms, the proof is still valid under our setting with minor adjustments and is indeed easier than the case $\dim  E^c =1$.
\end{remark}

\subsection{Diffeomorphisms away from tangencies}\label{section 1.3}
We say that $f\in\mathrm{Diff}^1(M)$ is away from homoclinic tangencies if there exists a $C^1$ neighborhood $\mathcal{U}$ of $f$ such that for any $g\in\mathcal{U}$ and any hyperbolic periodic orbit $P$ of $g$, $W^s(P)$ is transverse to $W^u(P)$.

The following is a simple consequence of \cite[Proposition 3.4]{LVY13}.

\begin{proposition} Assume that $f\in\mathrm{Diff}^1(M)$ is away from homoclinic tangencies and $m$ is an ergodic measure of $f$. Then there exists a dominated splitting $T_{\mathrm{supp}(m)}M=E^u\oplus E^c\oplus E^s$, such that $\dim E^c\leq 1$, the Lyapunov exponents in $E^u$ are positive and the Lyapunov exponents in $E^s$ are negative.
\end{proposition}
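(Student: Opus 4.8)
The plan is to deduce the statement essentially verbatim from \cite[Proposition 3.4]{LVY13}, which is the only real input, and then to carry out the small amount of bookkeeping needed to recast that result in the form stated here. First I would invoke \cite[Proposition 3.4]{LVY13}: since $f$ is away from homoclinic tangencies, the ergodic measure $m$ admits, over $\supp(m)$, a dominated splitting which at $m$-a.e.\ regular point is a grouping of the Oseledets decomposition and each of whose sub-bundles is either uniformly hyperbolic (contracting or expanding) over $\supp(m)$ or else one-dimensional. The key point is that along a uniformly contracting (resp.\ expanding) sub-bundle every Lyapunov exponent is negative (resp.\ positive) at $m$-a.e.\ point, while a one-dimensional sub-bundle carries a single exponent; hence every sub-bundle of this splitting has all of its Lyapunov exponents of one and the same sign. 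Grouping the sub-bundles with positive exponents into $E^u$, those with negative exponents into $E^s$, and those with zero exponent into $E^c$, and using that a grouping of a dominated splitting is again dominated (immediate from the definition recalled in \Cref{section 1.2}), I obtain a dominated splitting $T_{\supp(m)}M = E^u \oplus E^c \oplus E^s$ in which, by construction, $E^u$ has only positive and $E^s$ only negative Lyapunov exponents at $m$-a.e.\ point.

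It then remains to verify $\dim E^c \le 1$. Every sub-bundle feeding into $E^c$ is non-hyperbolic, hence one-dimensional, so $\dim E^c$ equals the number of such sub-bundles; what \cite[Proposition 3.4]{LVY13} actually guarantees is that, away from tangencies, this number is at most one, i.e.\ the zero Lyapunov exponent of $m$ has multiplicity at most one. This is the one step that is not purely formal, and I expect it to be the main obstacle: it addresses the classical phenomenon that a dominated sub-bundle of dimension two which cannot be refined and along which $m$ is non-hyperbolic (in particular, two one-dimensional central sub-bundles carrying exponents close to zero) could be used, via the perturbation techniques of Ma\~n\'e, Bonatti--D\'iaz--Pujals and Wen underlying \cite{LVY13}, to create a homoclinic tangency for a diffeomorphism $C^1$-close to $f$, contradicting the hypothesis. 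Granting this bound from \cite{LVY13}, one has $\dim E^c \le 1$.

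Finally, a technical remark to include if necessary: if \cite[Proposition 3.4]{LVY13} is stated only over $\Gamma \cap \supp(m)$ rather than over all of $\supp(m)$, one extends the splitting to $\supp(m)$ using that $\Gamma \cap \supp(m)$ is dense in $\supp(m)$ together with the fact, recalled in \Cref{section 1.2}, that a dominated splitting extends uniquely to the closure of the invariant set carrying it; this extension preserves both $\dim E^c$ and the signs of the Lyapunov exponents along $E^u$ and $E^s$ at $m$-a.e.\ point, which completes the argument.
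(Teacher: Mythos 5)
Your proposal is correct and follows essentially the same route as the paper, which disposes of the proposition in a single sentence by declaring it ``a simple consequence of \cite[Proposition 3.4]{LVY13}''. You are merely spelling out the bookkeeping that the paper leaves implicit: that the dominated splitting into uniformly hyperbolic or one-dimensional sub-bundles furnished by \cite[Proposition 3.4]{LVY13} can be grouped by sign of the exponents, that grouping preserves domination, and that the multiplicity-one bound on the zero exponent is precisely the substantive content imported from \cite{LVY13}.
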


Hence the zero exponent has multiplicity at most one. 
By the Main Theorem, we have $h^u_m(f)=h_m(f)$. This is summarized in the following theorem.
\begin{theorem}\label{away from tangent thm}
    If $f\in\mathrm{Diff}^1(M)$ is away from homoclinic tangencies and $m$ is an ergodic measure of $f$, then $h^u_m(f)=h_m(f)$. 
\end{theorem}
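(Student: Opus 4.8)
The plan is to obtain \Cref{away from tangent thm} as a formal consequence of the Proposition in \Cref{section 1.3} (which comes from \cite[Proposition 3.4]{LVY13}) together with the Main Theorem \Cref{main thm}; there is no new dynamics to carry out here, only a short verification that the hypotheses of \Cref{main thm} are met. First I would apply the Proposition: since $f$ is away from homoclinic tangencies and $m$ is ergodic, it produces a dominated splitting $T_{\mathrm{supp}(m)}M = F^u \oplus F^c \oplus F^s$ with $\dim F^c \le 1$, with all Lyapunov exponents along $F^u$ positive and all along $F^s$ negative. The Main Theorem, however, is phrased in terms of the \emph{Oseledets-graded} splitting $E^u \oplus E^c \oplus E^s$ (the positive-, zero-, and negative-exponent bundles), so the remaining step is to check that this graded splitting is itself dominated on $\mathrm{supp}(m)$ and has $\dim E^c \le 1$, the splitting from the Proposition being a priori only a candidate for it.

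For the reconciliation I would argue as follows. At an Oseledets-regular point the decomposition into exponent-graded pieces is canonical, and since $F^u$ (resp.\ $F^s$) carries only positive (resp.\ negative) exponents we get $F^u \subseteq E^u$ and $F^s \subseteq E^s$ at $m$-a.e.\ point. The bundle $F^c$ has at most one exponent $\lambda$, so a dimension count forces one of three situations: if $\lambda > 0$ then $E^u = F^u \oplus F^c$, $E^c = 0$, $E^s = F^s$; if $\lambda < 0$ then $E^u = F^u$, $E^c = 0$, $E^s = F^c \oplus F^s$; and if $\lambda = 0$ (or $\dim F^c = 0$) then $E^u = F^u$, $E^c = F^c$, $E^s = F^s$. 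In every case $\dim E^c \le 1$. For domination I would use that a three-block splitting is dominated precisely when each of its two consecutive two-block groupings is dominated, so from the Proposition both $(F^u) \oplus (F^c \oplus F^s)$ and $(F^u \oplus F^c) \oplus (F^s)$ are dominated over $\mathrm{supp}(m)$; matching these against the three cases above, the graded splitting $E^u \oplus E^c \oplus E^s$ is in each case either one of these dominated groupings or, when $\lambda = 0$, the full three-block splitting, hence dominated on $\mathrm{supp}(m)$ (and its restriction to $\Gamma \cap \mathrm{supp}(m)$ is dominated as required). With the hypotheses of \Cref{main thm} thus verified, I would invoke it to conclude $h^u_m(f) = h_m(f)$.

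I do not expect any genuine obstacle at this stage: all the substance lies in \cite[Proposition 3.4]{LVY13} and in \Cref{main thm}, and \Cref{away from tangent thm} is a corollary of the two. The only point demanding a moment's care — the identification of the LVY13 splitting with the exponent-graded splitting required by \Cref{main thm}, including the bookkeeping when the center bundle carries a nonzero exponent — is routine, relying only on the uniqueness of dominated splittings of a prescribed index and on the equivalence between domination of a multi-way splitting and domination of each of its consecutive groupings.
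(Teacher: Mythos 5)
Your proof is correct and follows the same route as the paper: invoke the LVY13 Proposition to produce a dominated splitting with one-dimensional center over $\mathrm{supp}(m)$, and then apply the Main Theorem. The extra care you take in reconciling the LVY13 bundle $F^c$ (which may carry a nonzero exponent) with the Oseledets-graded $E^c$ required by \Cref{main thm} is warranted and correct: the paper compresses this step into ``Hence the zero exponent has multiplicity at most one'' without explicitly checking that the Oseledets-graded splitting $E^u\oplus E^c\oplus E^s$ is itself dominated, whereas your three-case dimension count together with the equivalence between domination of a three-block splitting and domination of its two consecutive two-block groupings supplies exactly that verification.
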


\subsection{The non-ergodic case}
If the measure $m$ is not an ergodic measure, we can use the Ergodic Decomposition Theorem to reduce to the ergodic case. Such an argument was used in \cite[Section 6.2]{LEDRAPPIER_YOUNG_A} to reduce the non-ergodic case to the ergodic case in the $C^2$ setting. The same technique applies to our setting, and we omit the details here.

However, one should note that by the Oseledets theorem, there exists a splitting of positive, zero and negative exponents bundles $E^u(x)\oplus E^c(x)\oplus E^s(x)$ for $x\in\Gamma$ as before, where $\Gamma$ is the regular point set, but the dimension of each bundle need not be constant for $m$-a.e.$\ x\in M$ if $m$ is not ergodic. If this splitting is dominated on the support of $m$ and $\dim  E^c (x)\leq 1$ for $m$-a.e.\ $x\in M$, then for $m$-a.e.\ $x\in M$, the ergodic component $m_x$ of $m$ satisfies the assumptions of \Cref{main thm}, i.e., the splitting $ E^u \oplus  E^c \oplus  E^s $ is dominated on the support of $m_x$ and $\dim  E^c (y)\leq 1$ for $m_x-$a.e.\ $y\in M$. In this case, we define the $u$-entropy of $m$ as the integral of the $u$-entropy of its ergodic component $m_x$, i.e., $$h^u_m(f) := \int h^u_{m_x}(f)\,\td m(x).$$  Then we obtain a non-ergodic version of the main Theorem.
\begin{theorem}
	Let $f\in \Diff(M)$ preserve a Borel probability $m$. If the splitting $ E^u \oplus  E^c \oplus  E^s $ is dominated on the support of $m$ and $\dim  E^c (x)\leq1$ for $m$-a.e.\ $x\in M$, then we have $h^u_m(f)=h_m(f)$.
\end{theorem}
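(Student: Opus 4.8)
The plan is to deduce the non-ergodic statement from the ergodic Main Theorem (\Cref{main thm}) via the Ergodic Decomposition Theorem, exactly in the spirit of \cite[Section 6.2]{LEDRAPPIER_YOUNG_A}. Let $m=\int m_x\,\td m(x)$ be the ergodic decomposition of $m$. The first step is to verify that $m$-a.e.\ ergodic component $m_x$ satisfies the hypotheses of \Cref{main thm}: since $\supp(m_x)\subseteq\supp(m)$ for $m$-a.e.\ $x$, the dominated splitting $E^u\oplus E^c\oplus E^s$ on $\supp(m)$ restricts to a dominated splitting on $\supp(m_x)$; and by the Oseledets theorem the Oseledets bundles of $m_x$ at a regular point agree with those of $m$, so $\dim E^c(y)\le 1$ for $m_x$-a.e.\ $y$ forces the zero exponent of the ergodic measure $m_x$ to have multiplicity at most one. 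Hence $h^u_{m_x}(f)=h_{m_x}(f)$ for $m$-a.e.\ $x$, and integrating gives $\int h^u_{m_x}(f)\,\td m(x)=\int h_{m_x}(f)\,\td m(x)$.

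The second step is to match both sides with the corresponding quantities for $m$. For the metric entropy this is the classical affine property of entropy under ergodic decomposition (Jacobs' theorem / the Kolmogorov–Sinai ergodic decomposition of entropy), so $h_m(f)=\int h_{m_x}(f)\,\td m(x)$. For the $u$-entropy the right-hand side $\int h^u_{m_x}(f)\,\td m(x)$ is by definition $h^u_m(f)$, as declared just before the statement. Combining the two displays yields $h^u_m(f)=h_m(f)$, which is the claim.

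The only genuine point requiring care — and the step I would expect to be the main (mild) obstacle — is the measurability and consistency needed to make the ergodic decomposition argument rigorous in this $C^1$ setting: one must know that the Oseledets splitting and the Lyapunov exponents of the ergodic components vary measurably and coincide a.e.\ with those of $m$, so that the hypothesis ``$\dim E^c(x)\le1$ for $m$-a.e.\ $x$'' genuinely transfers to ``$m_x$ is ergodic with a one-dimensional (or trivial) center and a dominated $E^u\oplus E^c\oplus E^s$ on $\supp(m_x)$.'' This is standard (it is exactly the content of the Oseledets theorem together with the fact that $\Gamma$ has full measure for every invariant measure, and that $\supp(m_x)\subseteq\supp(m)$ almost surely), but it is the place where one should be explicit rather than wave hands; once it is in place the rest is a one-line integration. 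I would therefore write the proof by first recording these measurability facts, then invoking \Cref{main thm} fiberwise, and finally integrating and citing the affine property of metric entropy, noting that the details are suppressed precisely because they mirror \cite[Section 6.2]{LEDRAPPIER_YOUNG_A} verbatim.
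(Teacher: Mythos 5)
Your proposal is exactly the argument the paper has in mind: the paper explicitly defines $h^u_m(f):=\int h^u_{m_x}(f)\,\td m(x)$, remarks that each ergodic component $m_x$ inherits the hypotheses of \Cref{main thm} (since $\supp(m_x)\subseteq\supp(m)$ and the Oseledets data agree $m_x$-a.e.), and then invokes the $C^2$ template of \cite[Section 6.2]{LEDRAPPIER_YOUNG_A}, i.e.\ the affine property of metric entropy under ergodic decomposition, omitting the routine measurability verifications. You have simply made those omitted steps explicit; the route is the same.
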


Proceeding as above, one also obtains a non-ergodic version of \Cref{away from tangent thm}.
\begin{theorem}
	Assume that $f\in\mathrm{Diff}^1(M)$ is away from homoclinic tangencies and $m$ is an invariant measure of $f$, then $h^u_m(f)=h_m(f)$.
\end{theorem}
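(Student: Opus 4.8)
The plan is to obtain this statement by combining the proposition of \Cref{section 1.3} with the non-ergodic version of the Main Theorem stated just above. The substantive entropy identity is already contained in that theorem, so the task reduces to checking that, for a general (not necessarily ergodic) invariant $m$, being away from homoclinic tangencies supplies the data needed to apply it and even to make sense of $h^u_m(f)$: a dominated splitting $E^u\oplus E^c\oplus E^s$ on $\supp(m)$ with $\dim E^c(x)\le 1$ for $m$-a.e.\ $x$.

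First I would invoke the proposition of \Cref{section 1.3}, itself a consequence of \cite[Proposition 3.4]{LVY13}: since $f$ is away from homoclinic tangencies, there is a dominated splitting $T_{\supp(m)}M=\hat E^u\oplus\hat E^c\oplus\hat E^s$ with $\dim\hat E^c\le 1$, in which the Lyapunov exponents along $\hat E^u$ are positive and those along $\hat E^s$ negative. This concerns only the tangent dynamics over the compact invariant set $\supp(m)$ and uses nothing about ergodicity, so it applies to an arbitrary invariant $m$.

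Next I would compare $\hat E^u\oplus\hat E^c\oplus\hat E^s$ with the canonical Oseledets splitting $E^u\oplus E^c\oplus E^s$ into the positive-, zero- and negative-exponent bundles. Since every dominated splitting is, over the regular points, a coarsening of the Oseledets decomposition, at each $x\in\Gamma\cap\supp(m)$ each Oseledets bundle lies inside one of $\hat E^u(x),\hat E^c(x),\hat E^s(x)$; as the exponents in $\hat E^u$ (resp.\ $\hat E^s$) are positive (resp.\ negative), the zero-exponent bundle $E^c(x)$ is forced inside $\hat E^c(x)$, so $\dim E^c(x)\le\dim\hat E^c\le 1$. Using $\dim\hat E^c\le 1$ once more, at each regular point $E^u(x)\oplus E^c(x)\oplus E^s(x)$ is one of at most three coarsenings of $\hat E^u(x)\oplus\hat E^c(x)\oplus\hat E^s(x)$ (namely $\hat E^c$ gets merged into $\hat E^u$, or into $\hat E^s$, or kept as the center), the relevant type being governed by the sign of the exponent along $\hat E^c$ and hence $f$-invariant. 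Since a coarsening of a dominated splitting is again dominated with the same constant, and a finite union of invariant sets carrying a dominated splitting with a common constant is itself dominated, the splitting $E^u\oplus E^c\oplus E^s$ is dominated on $\Gamma\cap\supp(m)$, hence on $\supp(m)$. The non-ergodic Main Theorem now applies and yields $h^u_m(f)=h_m(f)$. Equivalently, one may note that for $m$-a.e.\ $x$ the ergodic component $m_x$ is supported in $\supp(m)$ while $f$ is still away from tangencies, apply \Cref{away from tangent thm} to $(f,m_x)$, and integrate, using $h^u_m(f)=\int h^u_{m_x}(f)\,\td m(x)$ together with the affinity of $h_m(f)$ in $m$.

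I do not expect a genuine obstacle here. The only point requiring a short argument rather than a direct citation is the domination of the canonical splitting $E^u\oplus E^c\oplus E^s$ over $\Gamma\cap\supp(m)$ for non-ergodic $m$, sketched above, together with the measurability of $x\mapsto h^u_{m_x}(f)$ that makes the defining integral legitimate; both follow routinely once the ambient dominated splitting of \cite{LVY13} is available. All the deeper content --- reducing the entropy identity to the ergodic Main Theorem via ergodic decomposition, in the manner of \cite[Section 6.2]{LEDRAPPIER_YOUNG_A} --- is already built into the non-ergodic Main Theorem proved above.
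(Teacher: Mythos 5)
Your proposal is correct and, via the alternative route you sketch at the end, matches the paper's own argument: the paper simply says ``proceeding as above,'' where ``above'' is the ergodic decomposition argument used for the non-ergodic Main Theorem, so the intended proof is exactly to apply \Cref{away from tangent thm} to $m$-a.e.\ ergodic component $m_x$ (the proposition of \Cref{section 1.3} applies to each ergodic $m_x$ directly) and then integrate using $h^u_m(f)=\int h^u_{m_x}(f)\,\td m(x)$ and the affinity of metric entropy.

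Your primary route --- upgrading the proposition of \Cref{section 1.3} to non-ergodic $m$ and then invoking the non-ergodic Main Theorem as a black box --- is a legitimate variant, but it carries a step the paper does not commit to: the proposition as stated in the paper is only for ergodic $m$, and while you argue that \cite[Proposition 3.4]{LVY13} plausibly yields a single dominated splitting $\hat E^u\oplus\hat E^c\oplus\hat E^s$ over the compact set $\supp(m)$ even when $m$ is not ergodic, that requires a statement from the reference that the paper does not cite in this generality. Moreover, the resulting Oseledets splitting $E^u\oplus E^c\oplus E^s$ may have non-constant bundle dimensions across different ergodic components (when the sign of the exponent along $\hat E^c$ varies), which makes it a dominated splitting only in the weak pointwise sense the paper's non-ergodic Main Theorem tolerates, not one that extends continuously to the closure. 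Your alternative route bypasses all of this, which is precisely why it is the cleaner (and the paper's own) way to get the statement. The one small point worth flagging, which you do flag, is measurability of $x\mapsto h^u_{m_x}(f)$; the paper takes it for granted, and it does follow routinely from the construction of subordinate partitions, but it is fair to note it needs a word.
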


{Further applications, in particular, the $C^1$ invariance principle, are postponed to Section \ref{section6}.}

\section{The geometric structure}\label{section 2}
In this section which is parallel to \cite[Section 2]{LEDRAPPIER_YOUNG_A}, we introduce the fake foliations in \Cref{section 2.1} and prove the uniform H\"older property of the fake unstable holonomy maps inside the fake center unstable leaves in \Cref{section 2.2}. In \Cref{section 2.3}, we consider the unstable manifolds and the center unstable sets in the fake foliation charts, and prove properties analogous to \cite[Section 2.2]{LEDRAPPIER_YOUNG_A} and \cite[Section 2.3]{LEDRAPPIER_YOUNG_A} under our setting. In \Cref{section 2.4} we construct the adapted partition, similar to \cite[Section 2.4]{LEDRAPPIER_YOUNG_A}.

{Compared to \cite[Section 2]{LEDRAPPIER_YOUNG_A}, the main difficulty caused by the lack of $C^{1+\mathrm{H\"{o}lder}}$ regularity is that we can not obtain the sub-exponential variation for the size of the Lyapunov charts. 
	To solve this issue, we will construct charts using {fake foliations} (Definition \ref{fake foliation charts def}) to replace the Lyapunov charts in \cite{LEDRAPPIER_YOUNG_A}. We then use the local argument (see \Cref{local argument}) to obtain asymptotic length control and establish the properties parallel to \cite[Section 2]{LEDRAPPIER_YOUNG_A}.  }

\subsection{Fake foliation}\label{section 2.1}
In this section, we consider fake foliations arising from dominated splittings. More details on fake foliations can be found in \cite{BW2010} and \cite{LVY13}.

Throughout this paper, we assume that $f\in \Diff(M)$ preserves an ergodic Borel probability $m$, {and the Oseledets splitting extends to a continuous dominated splitting $ E^u \oplus  E^c \oplus  E^s $ on the support of $m$, where $E^u$ (resp. $E^c, E^s$) corresponds to the positive (resp. zero, negative) Lyapunov exponents of $m$.} By the compactness of $\supp(m),$ there exists a constant $\theta>0$ such that for any $x\in \mathrm{supp}(m)$ and all $i\neq j\in\{u,c,s\}$, we have $|\sin\angle(E^i(x),E^{j}(x))|>\theta$. In other words, the angles between different bundles are uniformly bounded away from zero.

The following lemma follows from \cite[Section 8.1]{ABC}.
\begin{lemma}[{\cite[Section 8.1]{ABC}}]\label{lemmaA(x)}
    Given any $\epsilon, \eta>0$, there exist a full measure set $\Gamma'\subseteq \mathrm{supp}(m)\cap \Gamma$, two measurable functions $A':\Gamma'\rightarrow[1,\infty)$ and $C:\Gamma'\rightarrow(0,\infty)$, and a constant $N\in\mathbb N$ such that given any $x\in\Gamma'$, one has 	
	\begin{equation*}
		C(x)^{-1}\mathrm{e}^{-\eta n} \leq A'(f^{\pm n}x)\leq C(x)\mathrm{e}^{\eta n},\ \forall n\in \mathbb N,
	\end{equation*}
	and for any $k\in \mathbb N $ we have:
	\begin{equation*}
		\prod_{l=0}^{k-1}\| Df^N|_{ E^s (f^{lN}(x))}\| \leq A'(x)\cdot \mathrm{e}^{kN(\lambda^s+\epsilon)} ,
	\end{equation*}
	\begin{equation*}
		\prod_{l=0}^{k-1}\| Df^N|_{ E^c (f^{lN}(x))}\| \leq A'(x)\cdot \mathrm{e}^{kN\epsilon} ,
	\end{equation*}
	\begin{equation*}
		\prod_{l=0}^{k-1}\| Df^{-N}|_{ E^c (f^{-lN}(x))}\| \leq A'(x)\cdot \mathrm{e}^{kN\epsilon} ,
	\end{equation*}
	\begin{equation*}
		\prod_{l=0}^{k-1}\| Df^{-N}|_{ E^u (f^{-lN}(x))}\| \leq A'(x)\cdot \mathrm{e}^{kN(-\lambda^u+\epsilon)}. 
	\end{equation*}
\end{lemma}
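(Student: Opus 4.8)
The plan is to derive the Lemma from \cite[Section 8.1]{ABC}, together with the Oseledets and Kingman ergodic theorems; I think of $A'$ as (the reciprocal of) the size of a Pesin-type datum attached to $x$, of the four product inequalities as non-uniform hyperbolicity of the extremal bundles along $f^N$-orbits, and of the first display as the usual sub-exponential (tempered) variation of that datum. First I would record the pointwise input. By the Oseledets theorem, for $m$-a.e.\ $x$ one has $\tfrac1n\log\|Df^n|_{E^s(x)}\|\to\lambda^s$ (the top exponent of $E^s$), $\tfrac1n\log\|Df^{\pm n}|_{E^c(x)}\|\to0$, and $\tfrac1n\log\|Df^{-n}|_{E^u(x)}\|\to-\lambda^u$ (the least contracted direction of $Df^{-n}|_{E^u}$ corresponds to the smallest positive exponent $\lambda^u$); since $\|Df^{\pm1}|_{E^i}\|^{\pm1}\le L$ on $\supp(m)$ for some $L\ge1$, these convergences also hold in $L^1(m)$. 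By Kingman's subadditive ergodic theorem the averages $\tfrac1n\int\log\|Df^n|_{E^s}\|\,\td m$, and their three analogues for $E^c$ in both time directions and for $Df^{-n}|_{E^u}$, decrease to the respective limits, so I fix once and for all an $N\in\mathbb N$ which exceeds the domination period and is large enough that each of these $N$-averages lies within $\epsilon/4$ of its limit.

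Next I would pass from space averages to orbit averages. Since $m$ is $f$-ergodic it has a period $p=p(m)\ge1$; for any $N$ the number of $f^N$-ergodic components of $m$ is $\gcd(N,p)\le p$, and these components are permuted cyclically by $f$. Consequently, for any $\varphi$ bounded by $N\log L$ on $\supp(m)$, the $f^N$-Birkhoff average of $\varphi$ at $m$-a.e.\ point differs from $\int\varphi\,\td m$ by at most $2p\log L$ (a cocycle controlled by $L$ changes by $O(p\log L)$ under at most $p$ shifts by $f$). Applying this to $\varphi=\log\|Df^N|_{E^s}\|$, and taking $N$ still larger if necessary so that $2p\log L<N\epsilon/4$, I obtain that for $m$-a.e.\ $x$ the product $\prod_{l<k}\|Df^N|_{E^s(f^{lN}x)}\|$ has exponential growth rate $<N(\lambda^s+\epsilon)$, and likewise for the $E^c$- and $E^u$-products with rates $\epsilon$ and $-\lambda^u+\epsilon$. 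Letting $\Gamma'\subseteq\supp(m)\cap\Gamma$ be the full-measure set on which the above holds and setting
\[
A'(x):=\max\Big\{1,\ \sup_{k\ge1}\tfrac{\prod_{l<k}\|Df^{N}|_{E^s(f^{lN}x)}\|}{\mathrm e^{kN(\lambda^s+\epsilon)}},\ \sup_{k\ge1}\tfrac{\prod_{l<k}\|Df^{N}|_{E^c(f^{lN}x)}\|}{\mathrm e^{kN\epsilon}},\ \sup_{k\ge1}\tfrac{\prod_{l<k}\|Df^{-N}|_{E^c(f^{-lN}x)}\|}{\mathrm e^{kN\epsilon}},\ \sup_{k\ge1}\tfrac{\prod_{l<k}\|Df^{-N}|_{E^u(f^{-lN}x)}\|}{\mathrm e^{kN(-\lambda^u+\epsilon)}}\Big\},
\]
each supremum is finite on $\Gamma'$, $A'$ is measurable with $A'\ge1$, and the four product inequalities of the Lemma hold by construction.

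It then remains to produce $C$, which amounts to showing that $n\mapsto A'(f^{\pm n}x)\mathrm e^{-\eta n}$ is bounded for $m$-a.e.\ $x$ — the standard ``tempered variable'' property; granting this, $C(x):=\sup_{n\in\mathbb Z}A'(f^nx)\mathrm e^{-\eta|n|}$ is $m$-a.e.\ finite, is $\ge A'(x)\ge1$, and gives $C(x)^{-1}\mathrm e^{-\eta n}\le1\le A'(f^{\pm n}x)\le C(x)\mathrm e^{\eta n}$ for all $n\in\mathbb N$. I would establish this temperedness following the uniform-block constructions of \cite[Section 8.1]{ABC} (one controls the ``bad times'', where the relevant supremum is large, along $m$-a.e.\ orbit using the maximal ergodic theorem and Borel--Cantelli), and then conclude as above.

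\textbf{The main obstacle}, and the reason this is not a one-line consequence of Oseledets, lies precisely in the two places where \cite[Section 8.1]{ABC} is genuinely needed. First, when $\dim E^s>1$ (or $\lambda^s$ has multiplicity $>1$) the product $\prod_{l<k}\|Df^N|_{E^s(f^{lN}x)}\|$ of $N$-step operator norms may grow strictly faster than $\mathrm e^{kN\lambda^s}$, since the operator norm is only submultiplicative and the least-contracted direction may drift along the orbit; so one must choose $N$ large (to bring $\tfrac1N\int\log\|Df^N|_{E^s}\|\,\td m$ near $\lambda^s$) and genuinely exploit the bound $\gcd(N,p)\le p$ in order to push the $f^N$-Birkhoff average of $\log\|Df^N|_{E^s}\|$ below $N(\lambda^s+\epsilon)$ at \emph{every} point of a full-measure set. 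Second, the tempered bound on $A'$ along orbits cannot be obtained from a.e.\ finiteness alone — for a general bounded ergodic cocycle of strictly negative mean the supremum of its Birkhoff sums need not even be integrable — and again requires the uniform-block machinery of \cite[Section 8.1]{ABC}, which I would invoke for the complete verification.
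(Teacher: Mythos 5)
The paper does not prove this lemma; it is quoted directly from \cite[Section 8.1]{ABC}. So there is no ``paper's proof'' to compare against, and what you have written is a reconstruction. Most of it is reasonable: the identification of the four limits $\lambda^s,0,0,-\lambda^u$ via Oseledets, the use of Kingman to choose $N$, the definition of $A'$ as a supremum of ratios so that the four product inequalities hold by construction, and the definition $C(x)=\sup_n A'(f^nx)\,\mathrm e^{-\eta|n|}$ once temperedness is known. You also correctly isolate the two genuinely hard points: (i) going from a bound on $\tfrac1N\int\log\|Df^N|_{E^s}\|\,\td m$ to a bound on the $f^N$-Birkhoff average at $m$-a.e.\ point, and (ii) temperedness of $A'$. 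Deferring to \cite{ABC} for the full argument is appropriate.

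However, the specific mechanism you propose for point (i) is not sound. You assert that $m$ has a ``period'' $p=p(m)$ such that the number of $f^N$-ergodic components is $\gcd(N,p)\le p$ for every $N$, and you then absorb the component-to-component drift of $\int\log\|Df^N|_{E^s}\|$ (of order $p\log L$) into $N\epsilon/4$ by taking $N$ large. This fails in general: the rational point spectrum of an ergodic measure can be infinite (e.g.\ an odometer factor), in which case there is no fixed $p$ bounding the number of $f^N$-ergodic components uniformly over $N$. For such $m$, as $N$ grows the number of components can grow with it, the drift $2(d-1)\log L$ is no longer $o(N)$, and the Markov/averaging argument only yields the desired inequality on a positive-measure set, not almost everywhere. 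Subadditivity only controls the wrong side ($E[\log\|Df^N|_{E^s}\|\,|\,\mathcal I_{f^N}]\ge N\lambda^s$), so nothing prevents $\int\log\|Df^N|_{E^s}\|\,\td m_i$ from exceeding $N(\lambda^s+\epsilon)$ on some component $m_i$. This is precisely where \cite[Section 8.1]{ABC} does real work (via Pliss-type / uniform-block selections and using the dominated splitting, not just submultiplicativity), and your sketch should not claim that an elementary period bound suffices before handing off to \cite{ABC}; it should flag this step as genuinely requiring the cited machinery, just as you do for temperedness.
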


The following lemma follows by taking $\eta=\frac{\epsilon}{2}$ in the previous lemma and $A(x)=\max_{n\in \mathbb Z} \left\{ A'(f^nx)\cdot \mathrm{e}^{-|n|\epsilon}\right\}$.
\begin{lemma}\label{ABC ergodic lemma}
	Given any $\epsilon>0$, there exist a measurable function $A:\Gamma'\rightarrow[1,\infty)$ and a constant $N\in\mathbb{N}$, such that $A(f^{\pm}x)\leq \mathrm{e}^{\epsilon}A(x)$ and for any $k\in \mathbb N $ we have:
	\begin{equation*}
		\prod_{l=0}^{k-1}\| Df^N|_{ E^s(f^{lN}(x))}\| \leq A(x)\cdot \mathrm{e}^{kN(\lambda^s+\epsilon)},
	\end{equation*}
	\begin{equation*}
		\prod_{l=0}^{k-1}\| Df^N|_{ E^c (f^{lN}(x))}\| \leq A(x)\cdot \mathrm{e}^{kN\epsilon},
	\end{equation*}
	\begin{equation*}
		\prod_{l=0}^{k-1}\| Df^{-N}|_{ E^c (f^{-lN}(x))}\| \leq A(x)\cdot \mathrm{e}^{kN\epsilon},
	\end{equation*}
	and
	\begin{equation*}
		\prod_{l=0}^{k-1}\| Df^{-N}|_{ E^u (f^{-lN}(x))}\| \leq A(x)\cdot \mathrm{e}^{kN(-\lambda^u+\epsilon)}. 
	\end{equation*}
\end{lemma}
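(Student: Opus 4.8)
The plan is to deduce \Cref{ABC ergodic lemma} from \Cref{lemmaA(x)} by a standard tempering-kernel argument, specialized to the choice $\eta = \epsilon/2$. First I would apply \Cref{lemmaA(x)} with the given $\epsilon$ and with $\eta = \epsilon/2$, obtaining the full-measure set $\Gamma'$, the functions $A'$ and $C$, and the integer $N$ satisfying the four product estimates as stated there, together with the sub-exponential control $C(x)^{-1}e^{-\eta n} \le A'(f^{\pm n}x) \le C(x) e^{\eta n}$ for all $n \in \mathbb{N}$. The idea is then to replace $A'$ by its ``tempered envelope''
\[
A(x) := \max_{n \in \mathbb{Z}} \left\{ A'(f^n x) \cdot e^{-|n|\epsilon} \right\}.
\]

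Next I would check that $A$ is well-defined, finite, measurable, and takes values in $[1,\infty)$. Finiteness follows from the bound $A'(f^n x) \le C(x) e^{\eta |n|} = C(x) e^{|n|\epsilon/2}$, so that $A'(f^n x) e^{-|n|\epsilon} \le C(x) e^{-|n|\epsilon/2} \to 0$ as $|n| \to \infty$; hence the supremum over $n \in \mathbb{Z}$ is attained and is finite. The value is at least $A'(x) \ge 1$ (take $n=0$), so $A(x) \ge 1$. Measurability is clear since $A$ is a countable supremum of measurable functions. Then I would verify the tempering inequality $A(f^{\pm 1} x) \le e^{\epsilon} A(x)$: for any $n \in \mathbb{Z}$,
\[
A'(f^{n}(f^{\pm 1}x)) e^{-|n|\epsilon} = A'(f^{n\pm 1}x) e^{-|n\pm 1|\epsilon} \cdot e^{(|n\pm1| - |n|)\epsilon} \le e^{\epsilon} \cdot A'(f^{n\pm1}x) e^{-|n\pm1|\epsilon} \le e^{\epsilon} A(x),
\]
using $|n\pm 1| - |n| \le 1$, and taking the supremum over $n$ gives the claim. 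Finally, since $A(x) \ge A'(x)$ pointwise by the $n=0$ term, each of the four product estimates in \Cref{lemmaA(x)} remains valid with $A'(x)$ replaced by the larger quantity $A(x)$, which yields exactly the four displayed inequalities of \Cref{ABC ergodic lemma}.

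There is no real obstacle here; the only point requiring a little care is the finiteness of the envelope, which is precisely why one fixes $\eta = \epsilon/2$ rather than $\eta = \epsilon$ in \Cref{lemmaA(x)} — the gap between $\eta$ and $\epsilon$ is what forces $A'(f^n x) e^{-|n|\epsilon}$ to decay and makes the maximum finite. Everything else is a direct substitution, so the proof is short; I would present it essentially as the one-line justification already indicated in the statement (``taking $\eta = \epsilon/2$ in the previous lemma and $A(x) = \max_{n \in \mathbb{Z}}\{A'(f^n x) e^{-|n|\epsilon}\}$''), expanded only to the extent of the three verifications above.
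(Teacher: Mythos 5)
Your argument is precisely the one the paper indicates just before the lemma (``taking $\eta=\frac{\epsilon}{2}$ in the previous lemma and $A(x)=\max_{n\in \mathbb Z} \left\{ A'(f^nx)\cdot \mathrm{e}^{-|n|\epsilon}\right\}$''), and your three verifications — finiteness of the envelope, the tempering bound $A(f^{\pm 1}x)\le e^{\epsilon}A(x)$, and monotonicity $A\ge A'$ — are exactly the routine checks that justify it. No gap and no divergence from the paper's approach.
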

Fix a constant 
\begin{equation}\label{e.cf}
	C_f\geq 100\max\{\| Df\|,\mathrm{e}^{\lambda^s+100\epsilon},\mathrm{e}^{-\lambda^u+100\epsilon}\}.
\end{equation}
There exists a constant $\Delta>0$ such that for any $x\in M$, $\exp_x:T_xM\to M$ maps $\{v\in T_xM:|v|\leq \Delta\}$  diffeomorphically to its image, with $\sup_x\|D\exp_x^{\pm}\|\leq 2$. We define $\tf_x=\exp^{-1}_{fx}\circ f\circ \exp_x$ whenever it makes sense.

Choose a $C^\infty$ bump function $\rho:\mathbb R\rightarrow\mathbb R$ such that
\begin{equation*}
	\rho(x)=\left\{
	\begin{aligned}
		1 & , &|x|&<1\\
		0 & , & |x|&>2
	\end{aligned}
	\right.
\end{equation*}
and satisfies $\rho \in [0,1]$ for all $x\in \mathbb R$.
For any $r_0<0.1\Delta$, we can define $\tg_x:T_xM\rightarrow T_{fx}M$ for $x\in M$ as
\begin{equation*}
	\tg_x(v)=\left\{
	\begin{aligned}
		&\rho(|v|\cdot r_0^{-1})\tf_x(v)+(1-\rho(|v|\cdot r_0^{-1}))D\tf_x(0)v  , &|v|\leq 2r_0.\\
		&D\tf_x(0)v   , & |v|>2r_0.\\
	\end{aligned}
	\right.
\end{equation*}
Then we can calculate  
\begin{equation*}
	\begin{aligned}
		&\|D(\tg_x-D\tf_x(0))\|\\&\leq \rho(|v|\cdot r_0^{-1})\|D\tf_x(v)-D\tf_x(0)\|+\rho'(|v|\cdot r_0^{-1})r_0^{-1}\|\tf_x(v)-D\tf_x(0)\|\\
		&\leq \|D\tf_x(v)-D\tf_x(0)\|+C\cdot r_0^{-1}|v| \max_{|v'|\leq 2r_0}\|D\tf_x(v')-D\tf_x(0)\|\\
		&\leq 3C\cdot\max_{|v'|\leq 2r_0}\|D\tf_x(v')-D\tf_x(0)\|,
	\end{aligned}
\end{equation*}
where the constant $C>0$ can be chosen as the $C^1$ norm of $\rho$. By the uniform continuity of $Df$, for any $\epsilon_0>0$, there exists $r_0>0$ such that $D(\tg_x-D\tf_x(0))\leq \epsilon_0$. We use $L(g)$ to denote the Lipschitz constant of $g$; then we have $L(\tg_x-D\tf_x(0))\leq \epsilon_0$ and $\tg_x=\tf_x$ when $|v|\leq r_0$. Hence we can choose $\epsilon_0'>0$ sufficiently small such that for any $\epsilon_0\leq\epsilon_0'$ and $\tg_x$, $N$ defined as above, we have 
\begin{equation}\label{local argument 1}
	\mathrm{e}^{-\epsilon N}\leq \frac{|D\tf^{\pm N}_x(0)v|}{|D\tg^{\pm N}_x(y)v|}\leq \mathrm{e}^{\epsilon N}, \ \forall x\in M,\ v ,y\in T_xM .   
\end{equation}

Unlike \cite{LEDRAPPIER_YOUNG_A}, in this article we will not use the Lyapunov norms. Instead, we use the box norm $|\cdot|'$ on each $T_xM,x\in\Gamma'$ w.r.t.\ the splitting $T_xM= E^u _x\oplus E_x^c\oplus  E^s_x$, that is, $|v|'=\max\{|v_u|,|v_c|,|v_s|\}$ where $v=v_u+v_c+v_s$ is the decomposition as above, and $|\cdot|$ is the norm induced by the Riemann metric on $M$.

We then denote by $R^i_x(\rho)$ the ball centered at $0$ with radius $\rho$ under the box norm in $E^{i}_x$, where $i\in\{u,c,s,cu,cs,us\}$. Similarly, $R_x(\rho)$ (without any superscript) is the ball centered at $0$ with radius $\rho$ under the box norm in $T_xM$. Since the angles between different bundles are uniformly bounded away from zero, we have a constant $K>0$ such that $K^{-1}|\cdot|\leq |\cdot|'\leq K|\cdot|$. 

Henceforth we slightly abuse notation by treating $u,c,s,cu,cs,us$ as the dimension of their corresponding bundles (i.e., $u = \dim E^u$ and so on). 
By continuity, there exists a constant $C_0>K$ sufficiently large such that for any $x$ in the full measure set $\Gamma'$, and any $i$-dimensional linear subspace $V^i(x)$ as the graph of a linear function $G:E^i_x\rightarrow E^{j}_x$ with slope $\leq \frac{1}{C_0}$, $i,j\in\{u,c,s,cu,cs,us\}$ and $T_xM=E^i_x\oplus E^{j}_x$, we have
\begin{equation}\label{local argument 2}
	\mathrm{e}^{-\epsilon N}\leq\frac{\|D\tf_x^{\pm N}(0)|_{V^i(x)}\|}{\|D\tf_x^{\pm N}(0)|_{E^i(x)}\|}\leq \mathrm{e}^{\epsilon N},
\end{equation}
where $N$ is given by Lemma \ref{lemmaA(x)}.
We note that $C_0$ is a constant that depends only on $(f,m)$, $N$ and $\epsilon$ but not on $\epsilon_0$, $r_0$ or $\tg_\cdot$.

Now we consider the splitting $E^{cu}\oplus  E^s $. Since the splitting is dominated, there exists a large constant $C_1>0$ and a constant $\epsilon_0''\leq\epsilon'_0$, such that for any $\epsilon_0\leq\epsilon_0''$, and the $r_0$ and $\tg_\cdot$ given by $\epsilon_0$ as above, we have for any $x\in \Gamma'$, if $\varphi:E^{cu}_x\rightarrow  E^s_x$ is a smooth function with slope $\leq\frac{1}{C_1}$, then for any $n\geq 0$, we have $\tg^n_x(\mathrm{graph}(\varphi))\subseteq T_{f^nx}M$ is the graph of a function $\varphi_n:E^{cu}_{f^nx}\rightarrow  E^s_{f^nx}$ with slope $\leq\frac{1}{C_0}$. Similarly, if $\varphi:E^{s}_x\rightarrow E^{cu}_x$ is a smooth function with slope $\leq\frac{1}{C_1}$, then for any $n\leq 0$, we have $\tg^n_x(\mathrm{graph}(\varphi))\subseteq T_{f^nx}M$ is the graph of a function $\varphi_n:E^{s}_{f^nx}\rightarrow E^{cu}_{f^nx}$ with slope $\leq\frac{1}{C_0}$.
By enlarging $C_1$, we may further assume that the same conclusion holds for the dominated splitting $ E^u \oplus E^{cs}$. Keep in mind that the slopes here are all considered under the box norm.

We fixed a constant $\gamma_0<\frac{\pi}{10^6\cdot C_0^{100}}$. Following the standard graph transformation methods (see for instance \cite{Katok_book}, \cite{HPS}, \cite{BW2010} and \cite{LVY13}), we can choose $\epsilon_0>0$ and consequently $r_0>0$ small enough, such that the following lemma hold.

{
	\begin{lemma}[Fake foliations]\label{fake foliations}
		For each $x\in\Gamma'$ and $i\in\{u,s,cs,cu\}$, there exist unique global fake foliations $\thF^{i}_x$ on $T_xM$ with $C^1$ leaves, such that for any $y \in T_xM$:
		\begin{enumerate}
			\item[a)] The unique leaf containing $y$, denoted by $\tW^{i}_x(y)$, is the graph of a $C^1$ function $\varphi:E^{i}_x\rightarrow E^{\tau(i)}_x$ with $|D\varphi|\leq\frac{\gamma_0}{C_1}$. Here when $i = u,s,cu,cs$, $\tau(i) = cs,cu,s,u$ respectively. 
			\item[b)] The foliations are invariant under $\tg_{\cdot}$ in the sense that 
			$$
			g_x\left(\thF^{i}_x(y)\right) = \thF^{i}_{f(x)}(g_x(y)).
			$$
			\item[c)] $\thF^{u}_x$ subfoliates $\thF^{cu}_x$, and $\thF^{s}_x$ subfoliates $\thF^{cs}_x$.
		\end{enumerate}
	\end{lemma}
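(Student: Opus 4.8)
The plan is to obtain each $\thF^{i}_x$ by the classical graph–transform (Hadamard--Perron/HPS) construction applied to the partially linearised cocycle $\tg_\cdot$, realising the leaf–foliation as a $C^1$–limit of iterates of an affine foliation of the relevant fibre. The one point that must be checked — and that makes these foliations useful as substitutes for Lyapunov charts — is that the construction is \emph{global} on each $T_xM$ with all constants uniform along orbits; this is legitimate because $\tg_x$ coincides with the linear map $D\tf_x(0)$ for $|v|>2r_0$ and differs from it by a map of Lipschitz constant $\le\epsilon_0$ everywhere on $T_xM$, so the nonlinearity is bounded uniformly in $x$ and no loss of size occurs under iteration; for $\epsilon_0$ small $\tg_x$ is moreover a $C^1$ diffeomorphism $T_xM\to T_{fx}M$. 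The required cone fields are: for $i=u$ the slope–$\le\gamma_0/C_1$ cone over $E^u_x$ (using the dominated splitting $E^u\oplus E^{cs}$), for $i=cu$ the analogous cone over $E^{cu}_x$ (using $E^{cu}\oplus E^s$), and for $i=s,cs$ the transverse cones over $E^s_x$, $E^{cs}_x$ with $\tg^{-1}_\cdot$ in place of $\tg_\cdot$. The slope estimates established just above the lemma — a graph over $E^u$ (resp. $E^{cu}$) of slope $\le 1/C_1$ is carried by every forward iterate of $\tg_\cdot$ to a graph of slope $\le 1/C_0$, and symmetrically under backward iteration for $E^s,E^{cs}$ — together with \eqref{local argument 1}, \eqref{local argument 2} and $N$–step domination, give exactly the input of the graph transform: strict invariance of these cone fields, contraction of the cone opening by a fixed factor every $N$ iterates, and uniform expansion of $\tg^{\pm N}_\cdot$ inside the cones relative to the transverse directions.

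Fix $x\in\Gamma'$ and, say, $i=u$. For $n\ge 0$ let $\mathcal G_n$ be the pushforward under the diffeomorphism $\tg^n_{f^{-n}x}$ of the affine foliation of $T_{f^{-n}x}M$ by translates of $E^u_{f^{-n}x}$; by cone–invariance $\mathcal G_n$ is a foliation of all of $T_xM$ whose leaves are graphs of $C^1$ maps $E^u_x\to E^{cs}_x$ of slope $\le 1/C_0$. Since $\tg_\cdot$ is $C^1$ with $D\tg_\cdot$ uniformly continuous (inherited from $f\in\Diff(M)$ on the compact $M$ and the $C^\infty$ bump function), the graph transform is a uniform contraction on the space of such $C^1$ graph–foliations in the natural $C^1$ metric, so $(\mathcal G_n)$ is Cauchy and converges to a $C^1$ foliation $\thF^u_x$; the contraction of the cone opening by domination drives the limiting slope down to $\le\gamma_0/C_1$ once $\epsilon_0$ (hence $r_0$) is small, giving a). Invariance b) is immediate, since applying $\tg_x$ to $\mathcal G_n$ based at $x$ gives $\mathcal G_n$ based at $fx$, so the limits match; and uniqueness among $C^1$ graph–foliations of slope $\le\gamma_0/C_1$ is again the same contraction, such a foliation being the unique fixed point of the transform. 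The cases $i=cu,s,cs$ are identical after substituting the appropriate dominated splitting and reversing time for $s,cs$.

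For the subfoliation property c) I would invoke uniqueness rather than build $\thF^u$ a second time: each leaf $\tW^{cu}_x(y)$ of the already–constructed $\thF^{cu}_x$ is a $C^1$ manifold uniformly transverse to $E^s$ and invariant under $\tg_\cdot$ along the orbit, so the ambient dominated splitting restricts to a dominated splitting $E^u\oplus E^c$ of its tangent bundle for the induced dynamics. Running the graph transform for $E^u\oplus E^c$ inside the $cu$–leaves yields on each of them a $\tg_\cdot$–invariant $C^1$ foliation by graphs over $E^u$; assembling over all $cu$–leaves gives a global $\tg_\cdot$–invariant $C^1$ foliation of $T_xM$ by slope–$\le\gamma_0/C_1$ graphs over $E^u_x$, which by the uniqueness above coincides with $\thF^u_x$. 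Hence $\thF^u_x$ subfoliates $\thF^{cu}_x$, and symmetrically $\thF^s_x$ subfoliates $\thF^{cs}_x$.

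The step I expect to be the main obstacle is the genuine $C^1$ (not merely Lipschitz) regularity of the leaves with $f$ only of class $C^1$: unlike in \cite{LEDRAPPIER_YOUNG_A} one cannot appeal to Hölder control of $Df$, and must instead verify that the graph transform contracts on $C^1$ sections sharing a common modulus of continuity for the derivative — this is precisely where $N$–step domination is used, since it supplies at the level of $N$–th iterates the bunching inequality needed for $1$–normal hyperbolicity, and it is why the fake foliations have $C^1$ but in general no better leaves. Globality and the uniformity of $\gamma_0,C_0,C_1,N$ along orbits — so that the charts built from these foliations do not shrink under iteration — are by contrast automatic from the uniform bound $L(\tg_x-D\tf_x(0))\le\epsilon_0$.
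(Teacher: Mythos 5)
The paper does not actually prove this lemma; immediately above the statement it writes ``Following the standard graph transformation methods (see for instance \cite{Katok_book}, \cite{HPS}, \cite{BW2010} and \cite{LVY13}), we can choose $\epsilon_0>0$ \dots such that the following lemma hold,'' and nothing more. Your proposal is therefore filling in an argument the authors have chosen to cite rather than write out, and the route you take (global graph transform on $T_xM$ exploiting $L(\tg_x - D\tf_x(0))\le\epsilon_0$ and the $N$-step cone estimates already set up in the text, then uniqueness for the subfoliation) is precisely the standard one the references carry out; so in spirit you are doing the same thing.

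One technical imprecision worth flagging. You assert that ``the graph transform is a uniform contraction on the space of such $C^1$ graph--foliations in the natural $C^1$ metric.'' That is not the standard statement and is not literally what is true: the graph transform contracts in the $C^0$ (Lipschitz) distance, but it need not contract in a $C^1$ distance, and naively iterating would only give you Lipschitz leaves. The usual way to upgrade to $C^1$ is the \emph{fiber contraction theorem} of Hirsch--Pugh: one runs the graph transform together with its tangent-bundle lift (the induced action on candidate derivative fields over the contracting base), and the domination/bunching furnished by the $N$-step estimates \eqref{local argument 2} and Lemma~\ref{ABC ergodic lemma} is what makes that lifted map fiber-contracting, hence the limiting section is $C^1$. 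You correctly identify this as the delicate step and correctly attribute it to the domination, but the mechanism should be stated as a fiber contraction rather than as a $C^1$-norm contraction of the transform itself. The rest of your sketch --- globality and uniformity of constants via the bounded nonlinearity of $\tg_\cdot$, the slope bound $\le\gamma_0/C_1$ obtained by taking $\epsilon_0$ small, and the subfoliation property via uniqueness of the invariant graph-foliation of bounded slope --- matches the cited references.
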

}
{Note that Lemma \ref{fake foliations} does not immediately give the fake $c$-foliation}. Instead, the fake $c$-foliation is defined by 
$\tW^c_x(y)=\tW^{cu}_x(y)\cap\tW^{cs}_x(y)$. By the previous lemma, it is invariant under $\tg_\cdot$ and is the graph of a function $\varphi: E^c _x\rightarrow E^{us}_x$ with slope $\leq \frac{\gamma_0}{C_1}$. It subfoliates both $\thF^{cs}_x$ and $\thF^{cu}_x$.

{
	\begin{remark}\label{r.foliation.size}
		It is worth pointing out that fake foliations are defined on the entirety of $T_xM$ and are invariant under $\tg_\cdot$; however,  for points that stay in the ``local charts'' $\{|v|\leq r_0\}$, we have $\tg_\cdot=\tf_\cdot$ and thus the local foliations are invariant under $\tf_\cdot$. In other words, fake foliations are invariant under the lift of $f$ as long as they remain within the uniform size $r_0$; only in this case do they capture the behavior of the actual dynamics of $f$. We call this the local invariance property.
	\end{remark}
}

For simplicity, when $y\in \exp_x\{|v|\leq r_0\}$, we write $\tW_x^i(y) :=  \tW^i_x(\exp_x^{-1}y)$ for $i\in\{s,c,u,cu,cs\}$.

\subsection{Uniformly Hölder holonomy}\label{section 2.2}
In this section, we {consider the regularity of the fake $u$-holonomies insider the fake $cu$-leaves}. When $f$ is only $C^1$, we lose the Lipschitz property of the holonomy maps as in \cite[Section 4.2]{LEDRAPPIER_YOUNG_A}; however, we can prove the uniformly Hölder property with Hölder exponent arbitrarily close to $1$. We first define $c$-transversal.

\begin{definition}[$c$-transversal]\label{c-trans-def}
	For each $x\in\Gamma'$, $U\subseteq T_xM$ is called a $c$-transversal inside $\tW^{cu}_x(x)\subseteq T_xM$ with slope $\leq \frac{1}{C}$, if it is a {$c$-dimensional submanifold} of $\hat{U}=\mathrm{graph}(\varphi)\subseteq \tW^{cu}_x(x)$ where the function $\varphi: E^c _x\rightarrow E^{us}_x$ satisfies $|D\varphi|\leq\frac{1}{C}$.  
\end{definition}

Then we can prove that the fake $u$-holonomies inside the fake $cu$-leaves are uniformly Hölder. Similar results can be found in \cite{AmieW2013} and \cite{CaoWZ}. {Recall $\epsilon$ from Lemma \ref{lemmaA(x)} which, with $\eta=\frac{\epsilon}{2}$, determines $\Gamma'$.}

\begin{lemma}[Uniformly Hölder]\label{uniform Hölder}
	For each $x\in\Gamma'$, given two $c$-transversals $U$ and $V$ with slope $\leq\frac{1}{C_1}$ inside $\tW^{cu}_x(x)\subseteq T_xM$, if the fake $u$-foliation $\thF ^{u}_x$ induces an injective holonomy $\tth^u_x:U\rightarrow V$, then the fake $u$-holonomy is uniformly Hölder. Specifically, for every $r>0$, if we have for any $ p\in U, |p-\tth^u_x(p)|\leq r$, then for $\alpha=1-\frac{7\epsilon}{\lambda^u}$, $C_2=C_2(A(x),r)$,
	\begin{equation*}
		C_2^{-1}|p-q|^{\alpha^{-1}}\leq |\tth^u_x(p)-\tth^u_x(q)|\leq C_2|p-q|^\alpha, \ \forall p,q\in U.
	\end{equation*}
\end{lemma}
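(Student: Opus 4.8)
The plan is to estimate the fake $u$-holonomy by pulling it back under the dynamics to where the two $c$-transversals become exponentially close, controlling the distortion with the exponential estimates from \Cref{ABC ergodic lemma} together with the asymptotic length control \eqref{local argument 1}--\eqref{local argument 2}. Concretely, fix $p,q\in U$ and write $p' = \tth^u_x(p)$, $q'=\tth^u_x(q)\in V$. The points $p$ and $p'$ lie on the same fake $u$-leaf inside $\tW^{cu}_x(x)$, and similarly $q,q'$. Since fake $u$-leaves are graphs over $E^u_x$ with small slope, the $E^u$-components of $p-p'$ and of $q-q'$ are comparable to $|p-p'|\le r$, $|q-q'|\le r$. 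The key dynamical input is that along the fake $u$-foliation, applying $\tg_x^{-N}$ contracts by roughly $\mathrm{e}^{N(-\lambda^u+\epsilon)}$ up to the factor $A(x)$ (using local invariance, \Cref{r.foliation.size}, as long as the orbit stays in the charts of size $r_0$), whereas inside a $c$-transversal the iterate $\tg_x^{-N}$ expands by at most $\mathrm{e}^{N\epsilon}$ up to $A(x)$.

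The main steps: (1) Choose the first backward time $kN$ at which the orbit segment $\{\tg_x^{-jN}p, \tg_x^{-jN}p'\}$ for $j\le k$ is about to leave the charts of size $r_0$; equivalently, the first time the $u$-distance between the pulled-back transversals reaches a fixed fraction of $r_0$. Because $u$-distances are contracted at rate $\approx\mathrm{e}^{-\lambda^u N}$ per step (with the $A(\cdot)$ corrections controlled by $A(f^{-jN}x)\le \mathrm{e}^{j\epsilon}A(x)$), this time $k$ is determined, up to an additive constant depending on $A(x)$ and $r$, by $k N\lambda^u \approx \log(r_0/|p-p'|)$ and similarly for $q$. (2) At this stopping time, the pulled-back images $\tilde p = \tg_x^{-kN}p$, $\tilde q=\tg_x^{-kN}q$ inside the transversal $\tg_x^{-kN}U$ are at bounded-from-below distance in the $u$-direction from their $\thF^u$-images in $\tg_x^{-kN}V$, so a uniform (size $r_0$) transversality/graph argument in the local chart gives $|\tilde p' - \tilde q'| \le \mathrm{const}\cdot(|\tilde p-\tilde q| + \text{diam in } u)$, i.e.\ a Lipschitz-type bound at unit scale; the reverse inequality is symmetric. (3) Now push forward: $|p-q| = |\tg_x^{kN}\tilde p - \tg_x^{kN}\tilde q|$ is estimated by the expansion of $\tg_x^{kN}$ along the $c$-transversal, which is between $\mathrm{e}^{-kN\epsilon}A(x)^{-1}$ and (using that a $c$-transversal inside a $cu$-leaf expands at most like $E^{cu}$, hence at most $\mathrm{e}^{kN\epsilon}A(x)$ after absorbing the $E^c$ and $E^u$ rates — here one should be a little careful and instead bound the $u$-component growth by $\mathrm{e}^{kN(\lambda^u+\epsilon)}$ and note the transversal has slope $\le 1/C_1$) times $|\tilde p-\tilde q|$; and the same for $p',q'$. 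Combining the three steps, $\log|p'-q'|$ and $\log|p-q|$ are each, up to additive $O(\log A(x)) + O(\log r)$ errors, equal to $\log|\tilde p'-\tilde q'| \pm kN\epsilon$ and $\log|\tilde p - \tilde q|\pm kN\epsilon$ respectively, while $k$ satisfies $kN\lambda^u = \log(r_0/|p-p'|) + O(\log A(x))$, which does \emph{not} depend on the pair $(p,q)$ but only on the transversals. Eliminating $kN$ and using $\log|\tilde p'-\tilde q'| = \log|\tilde p-\tilde q| + O(1)$ yields
\[
\bigl|\log|p'-q'| - \log|p-q|\bigr| \le \frac{2\epsilon}{\lambda^u}\,\bigl|\log|p-q|\bigr| + \log C_2,
\]
which, after checking the elementary inequality $7\epsilon/\lambda^u$ absorbs the constant-$\lambda^u$-denominator losses from the several $\mathrm{e}^{\pm\epsilon N}$ and $A(\cdot)$ factors, gives the claimed two-sided Hölder bound with $\alpha = 1 - 7\epsilon/\lambda^u$ and $C_2 = C_2(A(x), r)$.

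The hard part will be step (2) combined with bookkeeping the stopping time uniformly in the pair $(p,q)$. One must ensure the single stopping time $k$ works for all pairs — so it should be defined from the transversals $U,V$ and the bound $r$ alone (e.g.\ as the first time the $\thF^u$-holonomy distance from $\tg_x^{-kN}U$ to $\tg_x^{-kN}V$ falls below $r_0/100$ everywhere, which is well-defined because that distance decays like $\mathrm{e}^{-kN\lambda^u}$ uniformly up to the factor $A(f^{-kN}x)\le\mathrm{e}^{kN\epsilon}A(x)$). One also has to confirm that throughout $0\le j\le k$ all relevant points stay inside $\{|v|\le r_0\}$, so that \Cref{r.foliation.size} applies and the fake foliations genuinely track $\tilde f$; this is where the choice $\gamma_0 < \pi/(10^6 C_0^{100})$ and $C_1$ large enter, guaranteeing the $cu$-leaf and its $u$-subleaves have small enough slopes that a point at $u$-distance $\le r_0/100$ and whose base-point moved by $\le r_0/2$ stays in the chart. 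The remaining estimates — the two-sided comparison of $|\tg_x^{\pm jN}v|$ with $|D\tf_x^{\pm jN}(0)v|$ via \eqref{local argument 1}, and the comparison of the $u$-component growth on a slope-$1/C_1$ transversal with that on $E^u$ via \eqref{local argument 2} — are routine given the lemmas, as is the final arithmetic with the error exponents. I would present (1) and the definition of the stopping time carefully, then (3) as two parallel displayed chains of inequalities for $|p-q|$ and $|p'-q'|$, then combine.
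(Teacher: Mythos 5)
Your overall strategy --- pull back by the dynamics inside the fake $cu$-leaf, use a Lipschitz-plus-additive-error bound at the pulled-back level, then push forward --- is the same as the paper's. But the crucial detail is wrong: the paper takes the pull-back time $n$ to \emph{depend on $|p-q|$}, namely $n=[\log|p-q|/(-\lambda^u)]$, so that after pulling back the $u$-holonomy distance $\approx A(x)e^{-n\lambda^u}r$ is comparable to the transversal distance $|p_n-q_n|\approx e^{\pm n\epsilon}|p-q|$ and the two terms in the estimate balance. You instead insist on a stopping time determined only by $r$ and $A(x)$ (``does not depend on the pair $(p,q)$ but only on the transversals''). With such a fixed $k$, the Lipschitz-plus-error bound $|\tilde p'-\tilde q'|\le C(|\tilde p-\tilde q|+\text{hol.\ dist.})$ has an additive term that does not shrink as $|p-q|\to 0$, so pushing forward only gives $|p'-q'|\le C(A(x),r)$, a bound that cannot produce any modulus of continuity, let alone the Hölder exponent $\alpha$. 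Your final inequality $|\log|p'-q'|-\log|p-q||\le \tfrac{2\epsilon}{\lambda^u}|\log|p-q||+\log C_2$ is a non sequitur from your setup: it is precisely what you would obtain if $kN\lambda^u\approx|\log|p-q||$ (the paper's choice), not if $kN\lambda^u\approx\log(r_0/r)+O(\log A(x))$ as you declared.

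Two smaller points. First, your description of the stopping time is reversed: under \emph{backward} iteration the $u$-holonomy distance $\approx e^{-n\lambda^u}r$ contracts, so ``the first time it reaches a fixed fraction of $r_0$'' is not defined (it starts below $r_0$ and only decreases); iterating forward until it reaches $r_0$ would carry the points out of the chart and you would not get the two-sided control on $|\tilde p-\tilde q|$ in terms of $|p-q|$. Second, the worry about staying inside $\{|v|\le r_0\}$ is not actually an issue for this lemma: the estimates \eqref{local argument 1} and \eqref{local argument 2} compare $D\tg_x$ with $D\tf_x(0)$ \emph{globally} on $T_xM$, and the statement is purely about the regularity of the fake foliation $\thF^u_x$ on the tangent space model, not about $f$ itself; the paper simply applies the local argument without any restriction to the chart. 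So the correct fix is to drop the ``uniform stopping time'' idea, adopt $n=[\log|p-q|/(-\lambda^u)]$, and keep the one-sided estimate, obtaining the lower Hölder bound from the inverse holonomy $V\to U$ by symmetry, exactly as the paper does.
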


\begin{proof}Fix $n\in\mathbb N$ whose choice (depending on $x,p$ and $q$) will be specified later.
	We define $U_n := \tg_x^{-n}U$, $V_n :=  \tg_x^{-n}V$ and note that $U_n,V_n\subseteq \tW^{cu}_{f^{-n}x}(f^{-n}x)$. By the definition of $C_1$ (see the discussion after Equation \eqref{local argument 2}), $U_n,V_n$ stay as $c$-transversals with slope $\leq \frac{1}{C_0}$. We denote $p_n=\tg^{-n}_xp$, $q_n=\tg^{-n}_xq$, and note that by invariance, $\tth^u_{f^{-n}x}p_n=\tg_x^{-n}\tth^u_xp$ and $\tth^u_{f^{-n}x}q_n=\tg^{-n}_x\tth^u_xq$.

	We denote $\hat{U}_n :=  \tg_x^{-n}\hat U$ and $\hat{V}_n :=  \tg_x^{-n}\hat V$ where $\hat U,\hat V$ are given in Definition \ref{c-trans-def}. In general, we write $d_R(\cdot,\cdot)$ for the distance in the submanifold $R$. 
	Since $\hat{U}_n$ and $\hat{V}_n$ are both graphs, we have $\frac{1}{2}|p-q|\leq d_{\hat U_n}(p,q)\leq 2|p-q|$ and $\frac{1}{2}|p-q|\leq d_{\hat V_n}(p,q)\leq 2|p-q|$.  Then we have
	\begin{equation}\label{local calclulate 1}
		\begin{aligned}
			|p_n-q_n|&=|\tg^{-n}_xp-\tg^{-n}_xq|\leq 2d_{\hat U_n}(\tg^{-n}_xp,\tg^{-n}_xq)\\
			&\leq 2\int^{p}_q |D\tg^{-n}_x(y)|_{T_y\hat{U}}|\,\td y\\
			&\leq \frac{1}{2}C_f^N\cdot\int^p_q |D\tg^{-KN}_x(y)|_{T_y\hat{U}}|\,\td y\ \ \ (\text{where }K=[\frac{n}{N}])\\
			&\leq \frac{1}{2}C^N_f\cdot\int^p_q |D\tf^{-KN}_x(0)|_{ E^c _x}|\cdot \mathrm{e}^{2KN\epsilon}\, \td y\ \ \ (\text{by  \Cref{local argument 1} and \ref{local argument 2}})\\
			&\leq \frac{1}{2}C^N_f \mathrm{e}^{2n\epsilon} A(x)\mathrm{e}^{KN\epsilon} d_{U}(p,q)\ \ \ (\text{by \Cref{ABC ergodic lemma}})\\
			&\leq A(x)C^N_f\mathrm{e}^{3n\epsilon} |p-q|.
		\end{aligned}     
	\end{equation}
	In the same way, we can estimate
	\begin{equation}\label{local calculate 2}
		\begin{aligned}
			|p_n-\tth^u_{f^{-n}x}(p_n)|&=|\tg^{-n}_xp-\tg^{-n}_x\tth^u_xp|\leq 2d_{\tW^u_{f^{-n}x}(p_n)}(\tg^{-n}_xp,\tg^{-n}_x\tth^u_xp)\\
			&\leq 2\int^{p}_{\tth^u_xp} |D\tg^{-n}_x(y)|_{T_y\tW^u_x(p)}|\td y\\
			&\leq \frac{1}{2}C_f^N\cdot\int^p_{\tth^u_xp} |D\tg^{-KN}_x(y)|_{{T_y\tW^u_x(p)}}|\td y\ \ \ (\text{where }K=[\frac{n}{N}])\\
			&\leq \frac{1}{2}C^N_f\cdot\int^p_{\tth^u_xp} |D\tf^{-KN}_x(0)|_{ E^u _x}|\cdot \mathrm{e}^{2KN\epsilon} \td y\ \ \ (\text{by  \Cref{local argument 1} and \ref{local argument 2}})\\
			&\leq \frac{1}{2}C^N_f \mathrm{e}^{2n\epsilon} A(x)\mathrm{e}^{KN(-\lambda^u+\epsilon)} d_{\tW^u_x(p)}(p,\tth^u_xp)\ \ \ (\text{by \Cref{ABC ergodic lemma}})\\
			&\leq A(x)C^N_f\mathrm{e}^{n(-\lambda^u+3\epsilon)} |p-\tth^u_xp| \\
			&\leq A(x)C^N_f\mathrm{e}^{n(-\lambda^u+3\epsilon)} \cdot r,
		\end{aligned}
	\end{equation}
	and similarly,
	\begin{equation*}
		|q_n-\tth^u_{f^{-n}x}(q_n)|\leq A(x)C^N_f\mathrm{e}^{n(-\lambda^u+3\epsilon)} \cdot r.         
	\end{equation*}

	We now choose a smooth foliation $\thF^{\bot}$ in $T_{ f^{-n}x}M$ with parallel leaves, such that each leaf is a plane parallel to the $E^{us}_{f^{-n}x}$ plane. This foliation induces a holonomy $h^{\bot}:\hat U_n\rightarrow\hat V_n$ which is a homeomorphism since $\hat U_n$ and $\hat V_n$ are graphs of functions with slopes less than $\frac{1}{C_0}$. Since the angle of the holonomy and the fake leaf is uniform, there exists $C_3>0$ such that
	\begin{equation*}
		|h^{\bot}(p_n)-h^\bot(q_n)|\leq C_3|p_n-q_n|\leq C_3 A(x)C^N_f\mathrm{e}^{3n\epsilon} |p-q|,
	\end{equation*}
	\begin{equation*}
		\begin{aligned}
			|\tth^u_{f^{-n}x}(p_n)-h^{\bot}(p_n)|&\leq C_3 |\tth^u_{f^{-n}x}(p_n)-h^{\bot}(p_n)|_{(E^{us}_x)^\bot}\\
			&=C_3|p_n-\tth^u_{f^{-n}x}(p_n)|_{(E^{us}_x)^\bot}\\
			&\leq C_3|p_n-\tth^u_{f^{-n}x}(p_n)| \\
			&\leq C_3 A(x)C^N_f\mathrm{e}^{n(-\lambda^u+3\epsilon)}\cdot  r,
		\end{aligned}
	\end{equation*}
	and in the same way,
	\begin{equation*}
		|\tth^u_{f^{-n}x}(q_n)-h^{\bot}(q_n)|\leq C_3 A(x)C^N_f\mathrm{e}^{n(-\lambda^u+3\epsilon)} \cdot r  .
	\end{equation*}
	Thus, we have that 
	\begin{equation*}
		|\tth^u_{f^{-n}x}(p_n)-\tth^u_{f^{-n}x}(q_n)|\leq C_3 A(x)C^N_f\mathrm{e}^{3n\epsilon} |p-q|+2C_3 A(x)C^N_f\mathrm{e}^{n(-\lambda^u+3\epsilon)} \cdot r.
	\end{equation*}
	Then we write
	\begin{equation}\label{local calculate 3}
		\begin{aligned}
			&  |\tth^u_x(p)-\tth^u_{x}(q)|\\&=|\tg^{n}_{f^{-n}x}\tth^u_{f^{-n}x}(p_n)-\tg^{n}_{f^{-n}x}\tth^u_{f^{-n}x}(q_n)|\\&\leq 2d_{\hat V_n}(\tg^{n}_{f^{-n}x}\tth^u_{f^{-n}x}(p_n),\tg^{n}_{f^{-n}x}\tth^u_{f^{-n}x}(q_n))\\
			&\leq 2\int^{\tth^u_{f^{-n}x}(p_n)}_{\tth^u_{f^{-n}x}(q_n)} |D\tg^{n}_{f^{-n}x}(y)|_{T_y\hat V_n(\tth^u_{f^{-n}x}(p_n))}|\,\td y\\
			&\leq \frac{1}{2}C_f^N\cdot\int^{\tth^u_{f^{-n}x}(p_n)}_{\tth^u_{f^{-n}x}(q_n)} |D\tg^{KN}_{f^{-n}x}(y)|_{T_y\hat V_n(\tth^u_{f^{-n}x}(p_n))}|\,\td y\ \ \ (\text{where }K=[\frac{n}{N}])\\
			&\leq \frac{1}{2}C^N_f\cdot\int^{\tth^u_{f^{-n}x}(p_n)}_{\tth^u_{f^{-n}x}(q_n)} |D\tf^{KN}_{f^{-n}x}(0)|_{ E^u _{f^{-n}x}}|\cdot \mathrm{e}^{2KN\epsilon} \,\td y\ \ \ (\text{by  \Cref{local argument 1} and \ref{local argument 2}})\\
			&\leq \frac{1}{2}C^N_f \mathrm{e}^{2n\epsilon} A(f^{-n}x)\mathrm{e}^{KN\epsilon} d_{\hat V_n}(\tth^u_{f^{-n}x}(p_n),\tth^u_{f^{-n}x}(q_n))\ \ \ (\text{by \Cref{ABC ergodic lemma}})\\
			&\leq A(f^{-n}x)C^N_f\mathrm{e}^{3n\epsilon} |\tth^u_{f^{-n}x}(p_n)-\tth^u_{f^{-n}x}(q_n)| \\
			&\leq A(x)C^N_f\mathrm{e}^{4n\epsilon}(C_3 A(x)C^N_f\mathrm{e}^{3n\epsilon} |p-q|+2C_3 A(x)C^N_f\mathrm{e}^{n(-\lambda^u+3\epsilon)} r).\\
		\end{aligned}
	\end{equation}
	Hence there exists a constant $C_4=C_4(A(x))$ such that 
	\begin{equation*}
		|\tth^u_x(p)-\tth^u_{x}(q)|\leq C_4 (\mathrm{e}^{7n\epsilon}|p-q|+\mathrm{e}^{(-\lambda^u+7\epsilon)n}r).
	\end{equation*}
	Letting $n=[\frac{\log |p-q|}{-\lambda^u}]$, $C_2=C_4C_f(r+1)$ and recalling the choice of $\alpha = 1-7\epsilon/\lambda^u$, we have
	\begin{equation*}
		|\tth^u_x(p)-\tth^u_x(q)|\leq C_2|p-q|^\alpha.
	\end{equation*}
	With this, we finished the proof for the right-hand side of the lemma. The left-hand side can be proven similarly.
\end{proof}
\begin{remark}[The local argument]\label{local argument}
	\Cref{local calclulate 1}, \Cref{local calculate 2} and \Cref{local calculate 3} are obtained by the same method, that is, by combining a form of continuity (\Cref{local argument 1}, \Cref{local argument 2}) with hyperbolicity (\Cref{ABC ergodic lemma}) to estimate the growth of the length of the fake leaves or $c$-transversals. This argument was used in \cite[Section 8]{ABC} and allows us to bypass the one-step estimate within Lyapunov charts in \cite{LEDRAPPIER_YOUNG_A} and nullify the lack of a higher regularity. We refer to this technique as \textbf{the local argument} and will use it frequently in the rest of this paper. For simplicity, henceforth we will omit the details and only state the result whenever the same argument is used.
\end{remark}

\subsection{Unstable manifolds and center unstable sets}\label{section 2.3}
In this section we discuss the geometric structure of the unstable manifolds and center unstable sets. We introduce the concept of fake foliation charts. For this purpose, let $\epsilon>0$ be fixed, and $\Gamma'$ be the full measure set given by Lemma \ref{ABC ergodic lemma}. Note that $R_x(\rho)$ is the ball centered at $0$ with radius $\rho$ under the box norm.

\begin{definition}[Fake foliation charts]\label{fake foliation charts def}
	For $x\in \Gamma'$, the $\delta$-fake foliation chart of $x$ is the pair $\{R_x(\delta A(x)^{-1}),\exp_x\}$ where $R_x(\delta A(x)^{-1})\subseteq T_xM$ and $\exp_x: R_x(\delta A(x)^{-1})\rightarrow M$.
\end{definition}

In Remark \ref{r.foliation.size} we established that fake foliations carry true dynamical information only within the uniform size $r_0$, inside which we have $\tilde g = \tilde f$. To guarantee that orbits stay within this scale, we will only consider foliation charts of the size $\delta A(x)^{-1}\ll r_0$, as indicated by the local argument (see for instance the proof of Lemma \ref{uniform Hölder} and \Cref{lemma 8 the size} below). This will also be the scale for which unstable/center unstable manifolds are defined. Remarkably, this is precisely the size of the Lyapunov charts in \cite{LEDRAPPIER_YOUNG_A} as the reciprocal of the hyperbolic function $A(x)$.



Recall that $C_f$ was chosen by Equation \eqref{e.cf} and $C_0$ was taken such that \eqref{local argument 2} holds. For $0<\delta\leq \delta_0 :=  \frac{r_0}{100C^N_fC_0^{10}}$, we define the center unstable set
$$S^{cu}_\delta(x) := \{v\in T_xM:|\tf^{-n}_xv|'\leq\delta A(f^{-n}x)^{-1},\forall n\geq 0\}.$$
The global unstable manifold of $x\in\Gamma'$
$$W^u(x)=\{ y\in M: \limsup_{n\rightarrow \infty}\frac{1}{n}\log (d(f^{-n}x,f^{-n}y))<0 \}$$
is an injectively immersed $C^1$ manifold with dimension $u$ by \cite[Proposition 8.9]{ABC}. 
For any $y\in\Gamma'\cap\exp_xS^{cu}_\delta(x)$ we denote 
$$
W^u_{x,2\delta}(y) := \exp^{-1}_x(\text{the component of }W^u(y)\cap \exp_xR_x(2\delta A(x)^{-1})\text{ that contains $y$})
$$ as the local unstable manifold (in the $\delta$-fake foliation charts of $x$). We first prove the following lemma, which characterizes the size of the fake unstable leaves that coincide with the true unstable manifolds inside the center unstable sets.  
\begin{lemma}\label{lemma 8 the size}
	For any $y\in\Gamma'\cap\exp_x S^{cu}_\delta(x)$, $W^u_{x,2\delta}(y)=\tW^u_x(y)\cap R_x(2\delta A(x)^{-1})$.
	\begin{proof}
		Since $W^u_{x,2\delta}(y)$ is a $u$-dimensional connected injectively immersed $C^1$ manifold and $\tW^u_x(y)\cap R_x(2\delta A(x)^{-1})$ is a graph of a function $\varphi:R_x^{u}(2\delta A(x)^{-1})\rightarrow R_x^{cs}(2\delta A(x^{-1}))$ with slope $\leq \frac{\gamma_0}{C_1}$, we only need to prove that $\tW^u_x(y)\cap R_x(2\delta A(x)^{-1})\subseteq W^u_{x,2\delta}(y)$, which only requires us to prove that $\tW^u_x(y)\cap R_x(2\delta A(x)^{-1})\subseteq \exp_x^{-1}W^u(y)$.

		If $z\in \tW^u_x(y)\cap R_x(2\delta A(x)^{-1})$, we consider $\tg^{-n}_xz\in \tW^u_{f^{-n}x}(f^{-n}y)$. By the local argument, we have 
		\begin{equation}\label{equation 26}
			\begin{aligned}
				|\tg^{-n}_xz-\tg^{-n}_x\exp_x^{-1}y|&\leq C^N_f\mathrm{e}^{(-\lambda^u+3\epsilon)n}A(x)|z-\exp_x^{-1}y|\\
				&\leq 4C_0\delta C^N_f\mathrm{e}^{(-\lambda^u+3\epsilon)n}\leq \frac{r_0}{10},
			\end{aligned}
		\end{equation}
		which goes to zero exponentially when $n\to +\infty$.

		Since $y\in \exp_xS^{cu}_\delta(x)$, for any $n\geq 0$, $\tf^{-n}_x\exp_x^{-1}y$ stays in the local charts $\{|v|\leq r_0\}$. Since $\tf_x=\tg_x$ when $|v|\leq r_0$, we have $|\tg_x^{-n}\exp_x^{-1}y|\leq \frac{r_0}{10}$ and hence $|\tg^{-n}_xz|<r_0$. Thus, for any $n\geq 0$, $\tg_x^{-n}z$ stays in the local charts $\{|v|\leq r_0\}$. Together with \Cref{equation 26} we have $|\tf^{-n}_xz-\tf^{-n}_x\exp_x^{-1}y|$  goes to zero exponentially when $n\to +\infty$. This means $z\in\exp_x^{-1}W^u(y)$, and consequently $\tW^u_x(y)\cap R_x(2\delta A(x)^{-1})\subseteq \exp_x^{-1}W^u(y)$.
	\end{proof}
\end{lemma}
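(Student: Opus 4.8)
The plan is to reduce the asserted equality to a single inclusion between two $u$-dimensional $C^1$ manifolds through $y$, and then to reduce that inclusion to a backward-orbit estimate furnished by the local argument (\Cref{local argument}) together with the defining property of $S^{cu}_\delta(x)$.

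First I would record that $W^u_{x,2\delta}(y)$ is, by \cite[Proposition 8.9]{ABC}, a $u$-dimensional connected injectively immersed $C^1$ manifold containing $y$, while by \Cref{fake foliations} the set $\tW^u_x(y)\cap R_x(2\delta A(x)^{-1})$ is the graph over \emph{all} of $R^u_x(2\delta A(x)^{-1})$ of a $C^1$ function into $R^{cs}_x(2\delta A(x)^{-1})$ with slope $\le\gamma_0/C_1$; in particular it is itself a $u$-dimensional connected $C^1$ manifold containing $y$ (note $\exp_x^{-1}y\in R_x(2\delta A(x)^{-1})$ since $y\in\exp_x S^{cu}_\delta(x)$). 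Because this graph is full over the $u$-ball — hence a closed $u$-submanifold of the chart meeting every $E^{cs}_x$-fiber exactly once — a connected $u$-submanifold of $\exp_x R_x(2\delta A(x)^{-1})$ containing it cannot be strictly larger; so it suffices to prove $\tW^u_x(y)\cap R_x(2\delta A(x)^{-1})\subseteq W^u_{x,2\delta}(y)$. Since the left-hand side is connected, lies in $R_x(2\delta A(x)^{-1})$, and contains $y$, for this it is in turn enough to show $\tW^u_x(y)\cap R_x(2\delta A(x)^{-1})\subseteq\exp_x^{-1}W^u(y)$.

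For the last inclusion, fix $z\in\tW^u_x(y)\cap R_x(2\delta A(x)^{-1})$. By invariance of the fake foliations (\Cref{fake foliations}(b)), $\tg^{-n}_x z$ lies on the fake unstable leaf through $\tg^{-n}_x\exp_x^{-1}y$, so the local argument applied exactly as in \eqref{local calculate 2} yields
\begin{equation*}
	|\tg^{-n}_x z-\tg^{-n}_x\exp_x^{-1}y|\ \le\ A(x)C^N_f\,\mathrm{e}^{(-\lambda^u+3\epsilon)n}\,|z-\exp_x^{-1}y|\ \le\ 4C_0\delta C^N_f\,\mathrm{e}^{(-\lambda^u+3\epsilon)n}\ \le\ \tfrac{r_0}{10},
\end{equation*}
using $|z-\exp_x^{-1}y|\le 4C_0\delta A(x)^{-1}$, the choice $\delta\le\delta_0=r_0/(100\,C^N_fC_0^{10})$, and $\lambda^u>3\epsilon$; this also shows the left side tends to $0$ exponentially. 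On the other hand, since $y\in\exp_x S^{cu}_\delta(x)$ one has $|\tf^{-n}_x\exp_x^{-1}y|'\le\delta A(f^{-n}x)^{-1}\le\delta$ for all $n\ge0$, so (passing from the box norm via $K^{-1}|\cdot|\le|\cdot|'\le K|\cdot|$) the backward orbit of $\exp_x^{-1}y$ never leaves the local chart $\{|v|\le r_0\}$, where $\tf_\cdot=\tg_\cdot$; hence $\tg^{-n}_x\exp_x^{-1}y=\tf^{-n}_x\exp_x^{-1}y$ and $|\tg^{-n}_x\exp_x^{-1}y|\le r_0/10$. Combining the two estimates gives $|\tg^{-n}_x z|<r_0$ for every $n\ge0$, and therefore, by induction on $n$, $\tg^{-n}_x z=\tf^{-n}_x z$ for all $n\ge0$ as well. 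Since $\tf^{-n}_x=\exp^{-1}_{f^{-n}x}\circ f^{-n}\circ\exp_x$ and $\|D\exp^{\pm}_x\|\le2$, it follows that
\begin{equation*}
	d(f^{-n}\exp_x z,\, f^{-n}y)\ \le\ 2\,|\tf^{-n}_x z-\tf^{-n}_x\exp_x^{-1}y|\ =\ 2\,|\tg^{-n}_x z-\tg^{-n}_x\exp_x^{-1}y|\ \longrightarrow\ 0
\end{equation*}
exponentially fast as $n\to\infty$, so $\exp_x z\in W^u(y)$, which gives the desired inclusion and hence the lemma.

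The genuinely delicate point is the self-bootstrapping character of the third paragraph: the contraction estimate coming from the local argument is legitimate only along orbit segments that remain within the uniform scale $\{|v|\le r_0\}$, inside which the fake foliation truly reflects the dynamics of $f$, yet staying within that scale is precisely what the estimate (combined with the $S^{cu}_\delta$-condition, which controls the reference orbit of $y$) is used to establish. Making the constants line up — which is exactly why $\delta_0$ was fixed as a small multiple of $r_0/(C^N_fC_0^{10})$ — is what lets the argument close; the remaining ingredients (the norm equivalences, the bound on $D\exp^{\pm}_x$, and the standard graph-and-dimension reduction of the second paragraph) are routine.
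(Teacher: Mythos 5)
Your proof is correct and follows essentially the same route as the paper's: reduce the equality of the two $u$-dimensional graphs to the inclusion $\tW^u_x(y)\cap R_x(2\delta A(x)^{-1})\subseteq\exp_x^{-1}W^u(y)$, and then establish it by combining the local-argument contraction estimate for $\tg^{-n}_x$ along the fake $u$-leaf with the $S^{cu}_\delta$ condition to show that both backward orbits stay inside $\{|v|\le r_0\}$, so that $\tg$-iterates equal $\tf$-iterates and the exponential convergence transfers to $f$. One small caveat about your closing commentary: the contraction estimate \eqref{local calculate 2}-style is an unconditional statement about $\tg$, whose fake foliations are invariant on all of $T_xM$, so it is not ``legitimate only along orbit segments staying within scale $r_0$''; what does require staying at that scale is the separate identification $\tg^{-n}_x=\tf^{-n}_x$, which the (unconditional) estimate then delivers as a consequence rather than as a circular premise.
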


For $0<\delta\leq \delta_0$, we prove the following lemma, which is partially parallel to \cite[Lemma 2.2.3]{LEDRAPPIER_YOUNG_A}.
\begin{lemma}\label{lemma 10 the manifold property}
	A. For any $ x\in\Gamma',\ y\in \Gamma'\cap \exp_xS^{cu}_\delta(x)$:\\
	1) $W^u_{x,2\delta}(y)$ is the graph of a function $\varphi:R^u_x(2\delta A(x)^{-1})\rightarrow R_x^{cs}(2\delta A(x)^{-1})$ with slope $\leq\frac{\gamma_0}{C_1}$.\\
	B. For $m$-a.e.$\ x\in\Gamma'$, if $y\in\Gamma'\cap \exp_xS^{cu}_\delta(x)$ and $fy\in \exp_{fx}S^{cu}_\delta(fx)$, then \\
	1) $\tf_xW^u_{x,2\delta}(y)\cap R_{fx}(2\delta A(fx)^{-1})\subseteq W^u_{fx,2\delta}(fy)$.\\
	2) $S^{cu}_{2\delta}(x)\cap\exp_x^{-1}W^u(y)\subseteq W^u_{x,2\delta}(y)$.
	\begin{proof}
		A.1) It directly follows from \Cref{lemma 8 the size} and \Cref{fake foliations}.\\
		B.1) It follows from \Cref{lemma 8 the size} and the invariance of the fake $u$-foliations.\\
		B.2) By Poincaré recurrence theorem, for $m$-a.e.$\ x\in\Gamma'$ we have $A(f^{-n}x)\nrightarrow 0$. If there exists $z\in S^{cu}_{2\delta}(x)\cap\exp^{-1}_xW^u(y)$ but $z\notin W^u_{x,2\delta}(y)$, we have $d(\tf^{-n}_xz,\tf^{-n}_x\exp_x^{-1}y)\rightarrow 0$. Hence for typical $x$, there exists $k=k(x)\in \mathbb{N}$ which is the smallest number such that $\tf_x^{-k}z\in W^u_{f^{-k}x,2\delta}(f^{-k}y)$. By the assumption of $z$, we have $k>0$, and hence $\tf^{-k+1}_xz\notin W^u_{f^{-k+1}x,2\delta}(f^{-k+1}y)$; by B.1), we have $\tf^{-k+1}_xz\notin R_{f^{-k+1}x}(2\delta A(f^{-k+1}x)^{-1})$, a contradiction.
	\end{proof}
\end{lemma}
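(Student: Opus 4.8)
The plan is to establish A.1), B.1) and B.2) in turn, the first two as quick consequences of \Cref{lemma 8 the size} and \Cref{fake foliations}, and the last by bootstrapping B.1) with a Poincaré recurrence argument. For A.1) I would combine \Cref{lemma 8 the size}, which identifies $W^u_{x,2\delta}(y)$ with $\tW^u_x(y)\cap R_x(2\delta A(x)^{-1})$, with part a) of \Cref{fake foliations}, which presents $\tW^u_x(y)$ as the graph of a $C^1$ map $\varphi\colon E^u_x\to E^{cs}_x$ of slope $\le\gamma_0/C_1$. Since $y\in\exp_xS^{cu}_\delta(x)$, the point $\exp_x^{-1}y$ lies in $R_x(\delta A(x)^{-1})$, so the value of $\varphi$ at its $E^u_x$-coordinate has box norm $\le\delta A(x)^{-1}$; as $\gamma_0/C_1$ is tiny, for every $v_u\in R^u_x(2\delta A(x)^{-1})$ one then gets $|\varphi(v_u)|'\le\delta A(x)^{-1}+\tfrac{3\gamma_0}{C_1}\delta A(x)^{-1}\le 2\delta A(x)^{-1}$, so the intersection with the box is exactly the graph of $\varphi$ over $R^u_x(2\delta A(x)^{-1})$. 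This step is routine.

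For B.1) the key is local invariance (cf.\ \Cref{r.foliation.size}): on $W^u_{x,2\delta}(y)=\tW^u_x(y)\cap R_x(2\delta A(x)^{-1})$ one has $|v|'\le 2\delta A(x)^{-1}\le 2\delta_0$, so $|v|$ stays well below $r_0$ and $\tf_x v=\tg_x v$ there; likewise $\tf_x\exp_x^{-1}y=\tg_x\exp_x^{-1}y=\exp_{fx}^{-1}fy$. By the $\tg_\cdot$-invariance of the fake $u$-foliation, $\tg_x(\tW^u_x(y))=\thF^u_{fx}(\exp_{fx}^{-1}fy)=\tW^u_{fx}(fy)$, hence $\tf_xW^u_{x,2\delta}(y)\subseteq\tW^u_{fx}(fy)$; intersecting with $R_{fx}(2\delta A(fx)^{-1})$ and invoking \Cref{lemma 8 the size} at $fx$ (legitimate since $fy\in\exp_{fx}S^{cu}_\delta(fx)$) yields B.1).

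For B.2) I would fix a typical $x$, take $z\in S^{cu}_{2\delta}(x)\cap\exp_x^{-1}W^u(y)$, let $k\ge 0$ be the least integer with $\tf^{-k}_x z\in W^u_{f^{-k}x,2\delta}(f^{-k}y)$, and show $k=0$. First, such a $k$ exists: since $\exp_x z\in W^u(y)$ the distance $|\tf^{-n}_x z-\tf^{-n}_x\exp_x^{-1}y|$ decays exponentially, and Poincaré recurrence gives $\liminf_n A(f^{-n}x)<\infty$ for $m$-a.e.\ $x$, so along a subsequence the chart radius $\delta A(f^{-n}x)^{-1}$ stays bounded below; since $\tf^{-n}_x\exp_x^{-1}y\in R_{f^{-n}x}(\delta A(f^{-n}x)^{-1})$ by the backward invariance of $S^{cu}_\delta$, for such large $n$ the iterate $\tf^{-n}_x z$ sits in $R_{f^{-n}x}(2\delta A(f^{-n}x)^{-1})$, and because $\exp_x z\in W^u(y)$, \Cref{lemma 8 the size} at $f^{-n}x$ identifies it with a point of $W^u_{f^{-n}x,2\delta}(f^{-n}y)$. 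Second, if $k\ge 1$ then $\tf^{-k}_x z\in W^u_{f^{-k}x,2\delta}(f^{-k}y)$ but $\tf^{-k+1}_x z=\tf_{f^{-k}x}(\tf^{-k}_x z)\notin W^u_{f^{-k+1}x,2\delta}(f^{-k+1}y)$ by minimality, so B.1) at $f^{-k}x$ forces $\tf^{-k+1}_x z\notin R_{f^{-k+1}x}(2\delta A(f^{-k+1}x)^{-1})$, contradicting $z\in S^{cu}_{2\delta}(x)$, which gives $|\tf^{-k+1}_x z|'\le 2\delta A(f^{-k+1}x)^{-1}$. Hence $k=0$, i.e.\ $z\in W^u_{x,2\delta}(y)$.

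I expect the main obstacle to be the existence of a finite $k$ in B.2): the delicate point is to guarantee that along a Poincaré-recurrence subsequence of times the iterate $\tf^{-n}_x z$ does not merely lie in the chart $R_{f^{-n}x}(2\delta A(f^{-n}x)^{-1})$ close to $\tf^{-n}_x\exp_x^{-1}y$, but in fact lands on the fake unstable leaf through $\tf^{-n}_x\exp_x^{-1}y$, equivalently on the correct connected component of $W^u(f^{-n}y)$ inside the chart. This rests on the quantitative interplay between exponential backward contraction along $W^u(y)$ and the non-vanishing of the chart radius at recurrence times, reconciled through \Cref{lemma 8 the size}; the other two assertions are comparatively soft.
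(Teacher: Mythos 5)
Your proposal is correct and follows essentially the same route as the paper: A.1) from Lemma \ref{lemma 8 the size} plus the graph description in Lemma \ref{fake foliations}, B.1) from Lemma \ref{lemma 8 the size} plus local invariance of the fake unstable foliation, and B.2) via Poincaré recurrence to produce a finite time $k$ at which $\tf_x^{-k}z$ lands in the local unstable manifold, then a minimality-plus-B.1) contradiction to force $k=0$. Two small observations worth noting. First, you have implicitly corrected what appears to be a typo in the paper: the recurrence statement should say $A(f^{-n}x)^{-1}\nrightarrow 0$ (equivalently $\liminf_n A(f^{-n}x)<\infty$) rather than the printed $A(f^{-n}x)\nrightarrow 0$, which is vacuous since $A\ge 1$; your phrasing is the correct one and is what makes the chart radius stay bounded below along the recurrent subsequence. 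Second, your treatment of the ``existence of finite $k$'' is more explicit than the paper's, in particular about why, along the recurrent subsequence, $\tf^{-n}_xz$ not only lands in the chart $R_{f^{-n}x}(2\delta A(f^{-n}x)^{-1})$ but on the right connected component of $W^u(f^{-n}y)$ there; this is precisely where the exponential decay of the intrinsic unstable distance $d^u(f^{-n}y,f^{-n}\exp_xz)$ combined with the bounded-below chart size is used, and spelling it out is a useful addition.
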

Finally, we prove that the center unstable set is contained in the $cu$-fake leaf.
\begin{lemma}\label{lemma above}
	For any $x\in\Gamma'$, $S^{cu}_{2\delta}(x)\subseteq \tW^{cu}_x(x)$.
	\begin{proof}
		Suppose on the contrary there exists $y\in S^{cu}_{2\delta}(x)$ but $y\notin \tW^{cu}_x(x)$. Let $z\in\tW^{cu}_x(x)\cap \tW^s_x(y)$ and note $z\ne y$. By the definition of the center unstable set, we have $\tf^{-n}_xy\in S^{cu}_{2\delta}(f^{-n}x)$ for every $n\in \mathbb N$ and by invariance, $$\tf^{-n}_xz\in\tf^{-n}_x\tW^{cu}_x(x)\cap\tf^{-n}_x\tW^s_x(y).$$ Hence we have $|\tf^{-n}_xz|'\leq 2(1+\frac{\gamma_0}{C_1})\delta$ for every $n\in\mathbb N$. But then the local argument shows
			\begin{align*}
				|z-y| &= |\tf^n_{f^{-n}x}(\tf^{-n}_xz) - \tf^n_{f^{-n}x}(\tf^{-n}_xy)|\\
				&\le 2(1+\frac{\gamma_0}{C_1})\delta A(f^{-n}x)C_f^Ne^{n(\lambda^s+3\epsilon)}\\
				&\le 2(1+\frac{\gamma_0}{C_1})\delta A(x)C_f^Ne^{n(\lambda^s+4\epsilon)}\\
				& \to 0 \mbox{ as $n\to\infty$},
			\end{align*}
			which means $z = y$, a contradiction.

	\end{proof}
\end{lemma}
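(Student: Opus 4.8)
The natural route is by contradiction, exploiting the transversality between the fake $cu$-leaf and the fake $s$-foliation (both of which are available from Lemma~\ref{fake foliations}). Suppose $y\in S^{cu}_{2\delta}(x)$ but $y\notin\tW^{cu}_x(x)$. Since $\tW^{cu}_x(x)$ is the graph over $E^{cu}_x$ of a function with slope $\le\gamma_0/C_1$ passing through the origin, the leaves of $\thF^s_x$ are graphs over $E^s_x$ with slope $\le\gamma_0/C_1$, and $T_xM=E^{cu}_x\oplus E^s_x$ is a splitting with angle bounded away from zero, the intersection $z:=\tW^{cu}_x(x)\cap\tW^s_x(y)$ is a single point, and $z\ne y$ because $z\in\tW^{cu}_x(x)$ while $y\notin\tW^{cu}_x(x)$. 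The plan is then to follow $z$ and $y$ backward and forward and conclude $|z-y|=0$.

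First I would use the $\tg_\cdot$-invariance of the fake foliations: from $z\in\tW^{cu}_x(x)\cap\tW^s_x(y)$ one gets, for every $n\ge0$, that $\tg^{-n}_xz$ lies simultaneously in $\tW^{cu}_{f^{-n}x}(f^{-n}x)$ (a graph over $E^{cu}_{f^{-n}x}$ through the origin, slope $\le\gamma_0/C_1$) and in $\tW^s_{f^{-n}x}(\tg^{-n}_xy)$ (a graph over $E^s_{f^{-n}x}$ through $\tg^{-n}_xy$, slope $\le\gamma_0/C_1$). Because $y\in S^{cu}_{2\delta}(x)$ forces $|\tf^{-n}_xy|'\le 2\delta A(f^{-n}x)^{-1}\le2\delta_0\ll r_0$, the backward orbit of $y$ stays in the region $\{|v|\le r_0\}$ where $\tg_\cdot=\tf_\cdot$, so $\tg^{-n}_xy=\tf^{-n}_xy$. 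Intersecting the two slope bounds and solving for the $E^{cu}_{f^{-n}x}$- and $E^s_{f^{-n}x}$-components of $\tg^{-n}_xz$ yields a \emph{uniform} bound $|\tg^{-n}_xz|'\le 2(1+\gamma_0/C_1)\delta$ for all $n$; in particular $\tf^{-n}_xz$ also stays in $\{|v|\le r_0\}$, as does the entire fake $s$-leaf segment joining $\tf^{-n}_xz$ to $\tf^{-n}_xy$, so on that whole segment $\tf_\cdot$ and $\tg_\cdot$ coincide and the local invariance property (Remark~\ref{r.foliation.size}) applies.

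Finally I would invoke the local argument (Remark~\ref{local argument}) applied to $\tf^n_{f^{-n}x}$ restricted to the tangent direction of the fake $s$-leaf: using \eqref{local argument 1}, \eqref{local argument 2} (the slope is $\le\gamma_0/C_1<1/C_0$) and the $E^s$-contraction estimate of Lemma~\ref{ABC ergodic lemma}, the length of the $s$-segment expands under $\tf^n_{f^{-n}x}$ by at most $A(f^{-n}x)\,C_f^N\,e^{n(\lambda^s+3\epsilon)}$. Since $z=\tf^n_{f^{-n}x}(\tf^{-n}_xz)$ and $y=\tf^n_{f^{-n}x}(\tf^{-n}_xy)$ are the images of two points joined by such a segment of length $O(\delta)$, and $A(f^{-n}x)\le e^{n\epsilon}A(x)$, one obtains
\[
|z-y|\le 2\Bigl(1+\tfrac{\gamma_0}{C_1}\Bigr)\,\delta\,A(x)\,C_f^N\,e^{n(\lambda^s+4\epsilon)},
\]
and the right-hand side tends to $0$ as $n\to\infty$ because $\lambda^s<0$ and $\epsilon$ is small (cf.\ \eqref{e.cf}); hence $z=y$, a contradiction.

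The step I expect to be the real obstacle is the bookkeeping that keeps the \emph{whole} relevant part of the orbit — not merely $\tf^{-n}_xy$, but $\tf^{-n}_xz$ and the entire fake $s$-segment between them — inside the local chart $\{|v|\le r_0\}$, since this is exactly what licenses replacing $\tg_\cdot$ by $\tf_\cdot$ and running the local argument. The uniform slope control on the fake $cu$- and $s$-leaves together with the defining inequality of $S^{cu}_{2\delta}$ are precisely what is needed to close this loop; once it is closed, the $E^s$-contraction furnished by Lemma~\ref{ABC ergodic lemma} finishes the argument.
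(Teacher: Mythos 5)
Your proof is correct and follows essentially the same route as the paper's: contradiction via the transversal intersection point $z\in\tW^{cu}_x(x)\cap\tW^s_x(y)$, a uniform box-norm bound on $\tf^{-n}_xz$ coming from the slope bounds on the fake leaves and $\tf^{-n}_xy\in S^{cu}_{2\delta}(f^{-n}x)$, and then the local argument with the $E^s$-contraction from Lemma~\ref{ABC ergodic lemma} to force $|z-y|\to 0$. Your extra bookkeeping that the whole $s$-segment stays inside $\{|v|\le r_0\}$ (so that $\tg$ may be replaced by $\tf$) makes explicit a point the paper leaves implicit, but the argument is the same.
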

\begin{remark}
	The center unstable sets defined in \cite[Section 2.2]{LEDRAPPIER_YOUNG_A} are a priori just $u$-saturated sets. In our setting, \Cref{lemma above} provides more topological information by showing that they are contained in the center unstable fake leaves. This can be seen as a form of dynamical coherence.
\end{remark}

\subsection{Adapted partition}\label{section 2.4}
In this section, we establish the existence of finite entropy adapted partitions, similar to \cite[Section 2.4]{LEDRAPPIER_YOUNG_A}. See also \cite{SunTianC1pesinformula}.

\begin{definition}[Adapted partitions]
	A measurable partition $\tP$ is said to be adapted to $\{A(x),\delta\}$, if for $m$-a.e.$\ x\in\Gamma'$, we have $\tP^{+}(x)\subseteq \exp_xS^{cu}_\delta(x)$, where $\tP^{+} := \bigvee^{\infty}_{i=0}f^i\tP$.
\end{definition}
Here we need the adapted partitions to have the property that points in each atom all stay well within the $\delta$-fake foliation charts. The following lemma ensures the existence of a finite entropy partition adapted to $\{A(x),\delta\}$. 
\begin{lemma}\label{adapted partition exist}
	For any $\epsilon>0$, $A(x)$ and $0<\delta\leq \delta_0$ defined as before, there exists a finite entropy partition $\tP$ such that for $m$-a.e.$\ x\in\Gamma',$ it holds that  $\tP^{+}(x)\subseteq\exp_xS^{cu}_\delta(x)$.
	\begin{proof}
		We first fix $A_0>0$ such that $\Lambda := \Lambda_{A_0} := \{x\in\Gamma':A(x)\leq A_0\}$ has positive measure. Define a function $\psi:M\rightarrow \mathbb R^{u}$ as:
		\begin{equation*}
			\psi(x)=\left\{
			\begin{aligned}
				&\delta  , &x&\notin \Lambda,\\
				&\delta A_0^{-1}C^{-r(x)}_f\frac{\mathrm{e}^{-\epsilon r(x)}}{2C_0}  , & x&\in \Lambda,
			\end{aligned}
			\right.
		\end{equation*}
		where $r(x)$ is the smallest positive integer $k$ such that $f^kx\in \Lambda$. 
		Following the same argument as in \cite[Lemma 2]{Mañé_1981} or  \cite[Lemma 2.4.2]{LEDRAPPIER_YOUNG_A}, there exists a finite entropy partition $\tP$ such that $\tP(x)\subseteq B(x,\psi(x))$ for almost every $x$, where $B(x,r)$ denote the ball centered at $x$ with radius $r$.

		It only remains to show $\tP^+(x)\subseteq \exp_xR_x(\delta A(x)^{-1})$ for $m$-a.e.$\ x$. 
		When $x\in\Lambda, \tP^+(x)\subseteq \tP(x)\subseteq B(x,\frac{\delta A_0^{-1}}{2C_0})\subseteq \exp_xR_x(\delta A(x)^{-1})$.
		When $x\notin \Lambda$, we choose $n>0$ as the smallest positive integer such that $f^{-n}x\in\Lambda$. Then $f^{-n}\tP^+(x)\subseteq B(f^{-n}x,\psi(f^{-n}x))$. Notice that by definition, $n\leq r(f^{-n}x)$. We have
		\begin{equation*}
			\begin{aligned}
				\tP^+(x)&\subseteq f^nB(f^{-n}x,\psi(f^{-n}x))\\
				&\subseteq B(x,C^n_f\psi(f^{-n}x))\\
				&\subseteq \exp_xR_x(2C_0C^n_f\delta A_0^{-1}C^{-r(f^{-n}x)}_f\frac{\mathrm{e}^{-\epsilon r(f^{-n}x)}}{2C_0})\\
				&\subseteq \exp_xR_x(\delta A_0^{-1} \mathrm{e}^{-\epsilon n})\\
				&\subseteq \exp_xR_x(\delta A(x)^{-1}).
			\end{aligned}
		\end{equation*}
		Here the last inclusion is due to
			$$
			A_0e^{\epsilon n} \ge A(f^{-n}x) e^{\epsilon n} \ge A(x).
			$$
			The proof is now complete.
	\end{proof}
\end{lemma}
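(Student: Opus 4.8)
The plan is to follow the classical construction of Mañé (\cite[Lemma 2]{Mañé_1981}, see also \cite[Lemma 2.4.2]{LEDRAPPIER_YOUNG_A}): from a suitable measurable radius function $\psi\colon M\to(0,\delta_0]$ with $\log(1/\psi)\in L^1(m)$ one obtains a finite entropy partition $\tP$ whose atoms satisfy $\tP(x)\subseteq B(x,\psi(x))$ for $m$-a.e.\ $x$. The only real issue is to choose $\psi$ so that the forward saturation $\tP^{+}(x)=\bigvee_{i\ge 0}f^{i}\tP\,(x)$ fits inside the non-uniform chart $\exp_x R_x(\delta A(x)^{-1})$, hence inside $\exp_x S^{cu}_\delta(x)$. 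The key elementary observation is that $y\in\tP^{+}(x)$ if and only if $f^{-i}y$ and $f^{-i}x$ lie in the same $\tP$-atom for all $i\ge 0$, so $\tP^{+}(x)$ is in fact controlled by \emph{backward} orbits; in particular $f^{-n}\tP^{+}(x)\subseteq\tP^{+}(f^{-n}x)$ for every $n\ge 0$, since shifting the index set $\{i\ge 0\}$ to $\{i\ge n\}$ only imposes fewer constraints.

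First I would fix $A_0>0$ large enough that $\Lambda:=\Lambda_{A_0}:=\{x\in\Gamma':A(x)\le A_0\}$ has positive $m$-measure, and for $x\in\Lambda$ let $r(x)$ be the first return time of $x$ to $\Lambda$ under $f$. I would then set $\psi(x)=\delta$ for $x\notin\Lambda$ and $\psi(x)=\delta A_0^{-1}C_f^{-r(x)}e^{-\epsilon r(x)}/(2C_0)$ for $x\in\Lambda$. The geometric factor $C_f^{-r(x)}e^{-\epsilon r(x)}$ is exactly what the bookkeeping below requires: a point leaving $\Lambda$ for $n$ steps sees its atom distorted by at most $C_f^{n}$ under $f^{n}$ and its hyperbolicity function oscillate by at most $e^{\epsilon n}$, while the constant $2C_0>2K$ absorbs the comparison between the Riemannian metric, the box norm and the bounded distortion of $\exp$. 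Since on $\Lambda$ the function $\log(1/\psi)$ differs from $r(x)(\log C_f+\epsilon)$ by a bounded term, and is constant off $\Lambda$, Kac's lemma gives $\int_\Lambda r\,dm<\infty$, hence $\log(1/\psi)\in L^1(m)$; so Mañé's construction produces the desired finite entropy $\tP$ with $\tP(x)\subseteq B(x,\psi(x))$ a.e.

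It then remains to check $\tP^{+}(x)\subseteq\exp_x R_x(\delta A(x)^{-1})$ for $m$-a.e.\ $x$; applying this at every $f^{-n}x$ together with $f^{-n}\tP^{+}(x)\subseteq\tP^{+}(f^{-n}x)$ yields $\tf_x^{-n}(\exp_x^{-1}\tP^{+}(x))\subseteq R_{f^{-n}x}(\delta A(f^{-n}x)^{-1})$ for all $n\ge 0$, i.e.\ $\exp_x^{-1}\tP^{+}(x)\subseteq S^{cu}_\delta(x)$, which is the assertion. For the verification I would split into two cases. If $x\in\Lambda$, then $\tP^{+}(x)\subseteq\tP(x)\subseteq B(x,\delta A_0^{-1}/(2C_0))$, which in the chart lies in $R_x(\delta A_0^{-1})\subseteq R_x(\delta A(x)^{-1})$ since $A(x)\le A_0$. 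If $x\notin\Lambda$, let $n\ge 1$ be the first time $f^{-n}x\in\Lambda$; then $n\le r(f^{-n}x)$, and since $f^{-n}\tP^{+}(x)\subseteq\tP^{+}(f^{-n}x)\subseteq B(f^{-n}x,\psi(f^{-n}x))$ while $f^{n}$ expands distances by at most $C_f^{n}$, one gets $\tP^{+}(x)\subseteq B(x,C_f^{n}\psi(f^{-n}x))\subseteq B(x,\delta A_0^{-1}e^{-\epsilon n}/(2C_0))$, hence $\tP^{+}(x)\subseteq\exp_x R_x(\delta A_0^{-1}e^{-\epsilon n})\subseteq\exp_x R_x(\delta A(x)^{-1})$, the last inclusion because $A(x)=A(f^{n}(f^{-n}x))\le e^{\epsilon n}A(f^{-n}x)\le e^{\epsilon n}A_0$ by \Cref{ABC ergodic lemma}.

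The only step that needs genuine care is the choice of decay rate in $\psi$: it must dominate both the one-step chart distortion $C_f$ and the oscillation rate $e^{\epsilon}$ of $A$ (so the two-case estimate closes) while remaining slow enough that $\log(1/\psi)$ is integrable (so Mañé's lemma applies). Taking the rate to be exactly $C_f e^{\epsilon}$, together with Kac's lemma for the finiteness of $\int_\Lambda r\,dm$, reconciles the two demands, and everything else is the routine chasing of the constants $C_f,C_0,K$ sketched above; I do not expect any obstacle beyond this balancing act.
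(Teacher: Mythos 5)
Your proposal is correct and follows essentially the same path as the paper: the same choice of $\Lambda=\Lambda_{A_0}$, the same radius function $\psi$ with the $C_f^{-r(x)}e^{-\epsilon r(x)}$ decay on $\Lambda$, an appeal to the Mañé construction for the finite-entropy partition, and the same two-case estimate chasing orbits back to their last visit to $\Lambda$. The one thing you do that the paper leaves implicit — and that is genuinely worth spelling out — is the reduction of the target $\tP^{+}(x)\subseteq\exp_x S^{cu}_\delta(x)$ to the single-chart statement $\tP^{+}(y)\subseteq\exp_y R_y(\delta A(y)^{-1})$ via the observation $f^{-n}\tP^{+}(x)\subseteq\tP^{+}(f^{-n}x)$; and you correctly flag Kac's lemma as the reason $\log(1/\psi)\in L^1(m)$, which is needed for Mañé's lemma to apply.
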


\section{Special partitions and unstable entropy}\label{u-entropy}
This section contains the basic properties of $u$-entropy.
In \Cref{section 3.1} we introduce the $u$-entropy for $C^1$ diffeomorphisms with dominated splitting; in \Cref{section 3.2} we define two special partitions and discuss their quotient structure; then in \Cref{section 3.3} we introduce a way to define transverse metrics. Most of the results here are parallel to \cite[Section 3]{LEDRAPPIER_YOUNG_A}.

\subsection{The unstable entropy}\label{section 3.1}
In this section we construct measurable partitions subordinate to $W^u$ following \cite{Ledrappier_Strelcyn_1982}, and use them to define the $u$-entropy of $f$.

Let $A(x)$ and $\Lambda_{A_0}$ be as before, and, parallel to \cite{Ledrappier_Strelcyn_1982}, write $V_{loc}(y)=\exp_yW^u_{y,\delta_0}(y)$ for $y\in\Lambda_{A_0}$. We now check that all the conditions in \cite[Section 3.3]{Ledrappier_Strelcyn_1982} hold. For simplicity, we provide only brief explanations and leave details to the reader.

\textit{1. For each $y\in\Lambda_{A_0}$, there exist $\epsilon({A_0})>0$ and $r_{A_0}>0$ such that for any $0<r\leq r_{A_0}$ and any $y\in\Lambda_{A_0}\cap B(x,\epsilon({A_0})\cdot r)$, $V_{loc}(y)\cap B(x,r)$ is connected.}
This follows from \Cref{lemma 8 the size} and the local graph structure of the fake foliations, \Cref{fake foliations}.

\textit{2. $y\mapsto V_{loc}(y)\cap B(x,r_{A_0})$ is continuous at every $y\in\Lambda_{A_0}\cap B(x,\epsilon(A_0)r_{A_0})$.}
This follows from \Cref{lemma 8 the size} and the continuity of the fake foliation, \Cref{fake foliations}.

\textit{3. There exists $ A_1>0$ such that $V_{loc}(y)$ contains the ball of radius $A_1$ centered at $y$.}
This follows from \Cref{lemma 8 the size} and \Cref{fake foliations}, since $A_0$ is uniform.

\textit{4. There exist $B_{A_0}>0,C_{A_0}>0$ such that if $z\in V_{loc}(y)$ then for every $n\geq 0$, $d(f^{-n}y,f^{-n}z)\leq B_{A_0}\mathrm{e}^{-nC_{A_0}}d(y,z)$.}
This follows from the proof of \Cref{lemma 8 the size}, more specifically, \Cref{equation 26}.

\textit{5. For any point $a_0$ in the support of $m|_{\Lambda_{A_0}}$ and any $0<r\leq r_{A_0}$, if two points $z_1,z_2\in S(a_0,r) := \bigcup_{y\in\Lambda_{A_0}\cap B(a_0,\epsilon({A_0})r)} V_{loc}(y)\cap B(a_0,r)$ are not in the same local unstable leaf, then $d^u(z_1,z_2)>2r$, where $d^u$ means the distance along the global unstable manifolds. If $z_1,z_2$ do not lie in the same global unstable manifold, $d^u(z_1,z_2) = \infty$.} This follows from \Cref{lemma 8 the size} and the local structure of the fake foliation, \Cref{fake foliations}.

Furthermore, for any point $a_0\in\supp(m|_{\Lambda_{A_0}})$, by reducing $r_{A_0}$, we can ensure that for every $S(a_0,r)$ defined as above and for any $x\in S(a_0,r)\cap \Lambda_{A_0},\ y\in S^{cu}_{\delta_0}(x)$, we have 
\begin{equation*}
    V_{loc}(y)\cap B(a_0,r)\subseteq \exp_xW^u_{x,2\delta_0}(y).
\end{equation*}

Hence following \cite{Ledrappier_Strelcyn_1982} (see also \cite[Section 3.1]{LEDRAPPIER_YOUNG_A}), we can construct a measurable set $S=S(a_0,r)$ as above (but not unique) such that the following holds:\\
a) $m(S\cap \Lambda_{A_0})>0$.\\
b) $S$ is a disjoint union of continuous embedded disks $\{D_\alpha\}$; each $D_\alpha$ is an open subset of $\exp_{x_\alpha} W^u_{x_\alpha,\delta_0}(x_\alpha)$ where $x_\alpha\in \Lambda_{A_0}$.\\
c) for $m$-a.e.$\ x\in\Gamma'$, there exists an open neighborhood $U_x$ of $x$ in $W^u(x)$ such that for any $n>0$, either $f^{-n}U_x\cap S=\emptyset$ or $f^{-n}U_x\subseteq D_\alpha$ for some $\alpha$.\\
d) Denote $r_1 :=  \sup_{\alpha} d^u(D_\alpha)$. If $x,y\in S$ lie on different local unstable leaves, then $ d^u(x,y)>r_1$.

Then we introduce two measurable partitions $\hat\xi$ and $\xi$: 
\begin{equation}\label{e.30}
\hat{\xi}(x)=\left\{
    \begin{aligned}
        &D_\alpha, & x\in D_\alpha\\
        &M-S,      & x\notin S,
    \end{aligned}
    \right .
\end{equation}
and
\begin{equation*}
    \xi(x)=\hat\xi^+(x).
\end{equation*}

The following lemma follows directly from the definition of $\xi$, \cite{Ledrappier_Strelcyn_1982} and the properties a)--d) mentioned above. The proof is left to the reader.
\begin{lemma}
    For Leb-a.e. $r>0$ small enough, the measurable partition
    $\xi$ defined as above is subordinate to $W^u$.
\end{lemma}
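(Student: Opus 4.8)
The statement to prove is that for Leb-a.e.\ small $r>0$, the partition $\xi = \hat\xi^+$ is subordinate to $W^u$; that is, for $m$-a.e.\ $x$ we need: (a) $\xi(x)\subseteq W^u(x)$ and contains an open neighborhood of $x$ in $W^u(x)$; (b) $f\xi \prec \xi$ (increasing); and (c) $\bigvee_{n\ge 0} f^{-n}\xi$ is the point partition. The plan is to verify these three in turn, leaning on the construction of $S=S(a_0,r)$ and properties (a)--(d) listed just before \eqref{e.30}, together with the framework of Ledrappier--Strelcyn \cite{Ledrappier_Strelcyn_1982}.

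\textbf{Property (b)} is essentially formal: $\hat\xi^+ = \bigvee_{i\ge 0} f^i\hat\xi$ satisfies $f\hat\xi^+ = \bigvee_{i\ge 1} f^i\hat\xi \prec \bigvee_{i\ge 0} f^i\hat\xi = \hat\xi^+$, so $\xi$ is automatically increasing. \textbf{Property (a)} uses the structure of $S$: since each atom $D_\alpha$ of $\hat\xi$ is an open subset of $\exp_{x_\alpha} W^u_{x_\alpha,\delta_0}(x_\alpha)\subseteq W^u(x_\alpha)$, and since $W^u$-leaves are either equal or disjoint, the atom $\xi(x) = \bigcap_{i\ge 0} f^i\hat\xi(f^{-i}x)$ lies inside a single unstable leaf $W^u(x)$. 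That $\xi(x)$ contains an open neighborhood of $x$ in $W^u(x)$ is exactly the content of property (c) in the list before \eqref{e.30}: there is an open neighborhood $U_x$ of $x$ in $W^u(x)$ such that for every $n>0$ either $f^{-n}U_x\cap S=\emptyset$ or $f^{-n}U_x\subseteq D_\alpha$; chasing this through the definition of $\hat\xi^+$ shows $U_x\subseteq \xi(x)$ (this is where we need Leb-a.e.\ $r$, to guarantee the orbit of $U_x$ never straddles $\partial S$; the boundary $\partial S$ as a function of $r$ sweeps out a null set of configurations, so for a.e.\ $r$ this is avoided, exactly as in \cite[Section 3.1]{LEDRAPPIER_YOUNG_A}).

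\textbf{Property (c)} is the one requiring genuine work. We must show $\bigvee_{n\ge 0} f^{-n}\xi$ separates points $m$-a.e., equivalently that the diameters of the atoms $\big(\bigvee_{n=0}^{N} f^{-n}\xi\big)(x)$ shrink to $0$ along the unstable direction. The key input is property (d): distinct local unstable leaves meeting $S$ are $d^u$-separated by more than $r_1 = \sup_\alpha d^u(D_\alpha)$. Because $\xi(x)\subseteq W^u_{x_\alpha,\delta_0}(x_\alpha)$ has $d^u$-diameter at most $r_1$, for any two points $y,z\in\xi(x)$ with $y\ne z$ we have $0 < d^u(y,z)\le r_1$, and then the exponential expansion of $f$ along $W^u$ — quantified by property (4) in the list at the start of Section \ref{u-entropy}, i.e.\ $d(f^{-n}y,f^{-n}z)\le B_{A_0}e^{-nC_{A_0}} d(y,z)$ read backwards as forward expansion — forces $d^u(f^n y, f^n z)$ to exceed $r_1$ for some $n\ge 0$, so that $f^n y$ and $f^n z$ land in different atoms of $\hat\xi$ (hence of $\xi$), i.e.\ $y$ and $z$ are separated by $f^{-n}\xi$. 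More precisely, one invokes the Poincaré recurrence / ergodicity to say $f^{-n}x$ returns to $\Lambda_{A_0}$ infinitely often so that the hyperbolicity estimates apply at a definite rate along a subsequence, and the local-argument length control (Remark \ref{local argument}) gives the uniform expansion needed. Altogether this shows $\bigvee_{n\ge 0} f^{-n}\xi$ is the partition into points $m$-a.e.

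\textbf{Main obstacle.} The routine parts are (a) and (b); the real content is transplanting the Ledrappier--Strelcyn increasing-partition construction into the $C^1$ fake-foliation setting and verifying property (c). The subtle point there is the ``Leb-a.e.\ $r$'' hypothesis: one needs that for almost every radius $r$ the boundary $\partial S(a_0,r)$ is $m$-null and, more importantly, that no orbit spends a positive-measure amount of time exactly on the relevant boundaries, so that the combinatorial alternative in property (c) of the pre-\eqref{e.30} list holds cleanly; this is handled by a Fubini argument in $r$ exactly as in \cite[Section 3.1]{LEDRAPPIER_YOUNG_A} and \cite{Ledrappier_Strelcyn_1982}. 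Since the excerpt explicitly says ``The proof is left to the reader'' and all five verification items plus properties (a)--(d) are already in hand, the proof is a bookkeeping assembly of these facts; I would present it as such, citing \cite{Ledrappier_Strelcyn_1982} for the partition construction and our Lemma \ref{lemma 8 the size}, Lemma \ref{fake foliations}, and Remark \ref{local argument} for the geometric inputs.
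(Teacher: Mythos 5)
Your proposal assembles the argument from the listed properties a)--d), the five conditions verified at the start of Section 3.1, and the Ledrappier--Strelcyn framework, which is exactly what the paper intends when it writes ``the proof is left to the reader.'' The three subordination conditions are handled in the right order: (b) is formal from $\xi=\hat\xi^+$; (a) follows because each $D_\alpha$ sits in a single unstable leaf and property (c) of the pre-\eqref{e.30} list supplies the open neighborhood $U_x$ whose backward iterates never straddle $\partial S$; and the generation property uses the $d^u$-separation $r_1$ from item (d), forward expansion (property 4, i.e.\ the contraction $d(f^{-n}y,f^{-n}z)\leq B_{A_0}e^{-nC_{A_0}}d(y,z)$ read forwards), and Poincar\'e recurrence to $\Lambda_{A_0}\cap S$ so the estimate is available along a subsequence; the ``Leb-a.e.\ $r$'' hypothesis is correctly identified as a Fubini argument guaranteeing $m(\bigcup_{n\ge 0}f^{-n}\partial S(a_0,r))=0$. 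This matches the paper's intended route, and the minor points you gloss over (e.g.\ that non-separation at all return times to $\Lambda_{A_0}\cap S$ forces $d^u(y,z)=0$, and the $n=0$ case of the neighborhood inclusion when $x\notin S$) are straightforward to fill in.
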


We then define the $u$-entropy through the entropy of the partitions subordinate to $W^u$ as $h^u_m(f) :=  h_m(f,\xi)=H(\xi|f\xi)$. The following lemma shows that the $u$-entropy does not depend on the choice of $\xi$, and is a number only determined by $f$ and $m$.
\begin{lemma}[{\cite[Lemma 3.1.2]{LEDRAPPIER_YOUNG_A}}]
    For any partitions $\xi_1,\xi_2$ subordinate to $W^u$, it holds that $$h_m(f,\xi_1)=h_m(f,\xi_2) :=  h^u_m(f).$$
\end{lemma}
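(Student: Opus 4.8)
The plan is to mimic the classical Ledrappier--Young argument (\cite[Lemma 3.1.2]{LEDRAPPIER_YOUNG_A}) for proving that the entropy $h_m(f,\xi)=H(\xi\mid f\xi)$ is independent of the choice of measurable partition $\xi$ subordinate to $W^u$. The core fact to establish is that for any two such partitions $\xi_1,\xi_2$, one has $h_m(f,\xi_1)=h_m(f,\xi_2)$, and this will follow from an intermediate comparison with the common refinement $\xi_1\vee\xi_2$ (which, being finer, is still increasing, has atoms inside unstable leaves containing open neighborhoods, and satisfies $\bigvee_{n\ge0}f^{-n}(\xi_1\vee\xi_2)=$ points, hence is also subordinate to $W^u$). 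So it suffices to prove that if $\eta$ is subordinate to $W^u$ and $\eta\prec\zeta$ with $\zeta$ also subordinate to $W^u$, then $h_m(f,\eta)=h_m(f,\zeta)$.

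The key identity I would use is the general Rokhlin-type formula $H(\zeta\mid f\zeta)=H(\eta\mid f\eta)+H(\zeta\mid\eta)-H(f\zeta\mid f\eta)$, valid whenever all the conditional entropies involved are finite and the relevant partitions are comparable; since $f$ preserves $m$, $H(\zeta\mid\eta)=H(f\zeta\mid f\eta)$ would immediately give the result, \emph{provided} $H(\zeta\mid\eta)<\infty$. Thus the real content is a finiteness estimate: one must show $H(\zeta\mid\eta)<\infty$ for two partitions subordinate to $W^u$ with $\eta\prec\zeta$. This is exactly where the geometry enters. The standard route: fix a saturated set $S$ (of the type $S(a_0,r)$ constructed above, with properties (a)--(d)) adapted to one of the partitions, disintegrate $m$ along $\eta$, and bound the number of $\zeta$-atoms inside a typical $\eta$-atom. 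Because $\eta(x)\subseteq W^u(x)$ contains an open neighborhood of $x$ and the atoms have a definite ``size'' governed by $A(x)$ (via the fake foliation charts of Definition~\ref{fake foliation charts def} and Lemma~\ref{lemma 8 the size}), a covering/volume argument on the unstable leaf bounds the conditional entropy in terms of $\int \log^+(\text{number of atoms})\,dm$, which is controlled by the local Jacobian of $f$ along $W^u$ and the integrable function $\log A(x)$.

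More concretely, the steps in order are: (1) reduce to the comparable case $\eta\prec\zeta$ by passing to $\xi_1\vee\xi_2$ and checking it is subordinate to $W^u$; (2) write the Rokhlin addition formula for $H(\zeta\mid f\zeta)$ in terms of $H(\eta\mid f\eta)$, $H(\zeta\mid\eta)$, and $H(f\zeta\mid f\eta)$; (3) use $f$-invariance of $m$ to cancel the latter two terms, reducing everything to the claim $H(\zeta\mid\eta)<\infty$; (4) establish this finiteness by the covering argument on unstable leaves, using the uniform size $\delta A(x)^{-1}$ of the fake foliation charts (Lemma~\ref{adapted partition exist} gives a finite-entropy adapted partition refining the picture, and the local argument / Lemma~\ref{equation 26} controls distortion along $W^u$); (5) conclude equality. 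I expect the main obstacle to be step (4): proving $H(\zeta\mid\eta)<\infty$ requires a uniform-in-$x$ (up to the integrable weight $\log A(x)$) bound on how many $\zeta$-atoms fit inside an $\eta$-atom, and this demands that the local unstable manifolds have a quantitatively controlled size and that the induced holonomy/volume distortion along $W^u$ is integrable — facts that in the $C^1$ setting we get only through the fake foliation charts and the local argument of Remark~\ref{local argument}, rather than from Lyapunov charts. Once the finiteness is in hand, the algebraic cancellation is routine and identical to \cite[Lemma 3.1.2]{LEDRAPPIER_YOUNG_A}.
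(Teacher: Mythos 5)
Your steps (1)--(3) — reducing to comparable partitions via $\xi_1\vee\xi_2$ and then invoking the chain rule and $f$-invariance — are correct and match the standard Ledrappier--Young route that the paper is citing. The gap is in step (4). First, the finiteness $H(\zeta\mid\eta)<\infty$ is \emph{not} automatic for arbitrary subordinate partitions $\eta\prec\zeta$: the definition of ``subordinate'' in the paper imposes no lower bound on the size of $\zeta$-atoms inside an $\eta$-atom, and one can arrange the conditional-measure masses of the (countably many) $\zeta$-pieces inside an $\eta$-atom to decay slowly enough that the conditional entropy diverges. Second, the covering/size argument you invoke (atoms ``of a definite size governed by $A(x)$'', via the adapted partition $\tilde{\mathcal P}$ and the fake foliation charts) is available only for the \emph{specific} subordinate partitions the paper constructs from $S$-sets and $\tilde{\mathcal P}$; the lemma, however, must hold for arbitrary subordinate $\xi_1,\xi_2$, for which no such size control is given.

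The classical argument (which is what \cite[Lemma 3.1.2]{LEDRAPPIER_YOUNG_A} really does, and what the paper is implicitly relying on) avoids ever proving $H(\zeta\mid\eta)<\infty$. Work with information functions rather than entropies: for $\eta\prec\zeta$ increasing one has the pointwise, $m$-a.e.\ coboundary identity
$$\phi(f^{-1}x)-\phi(x)=I(\eta\mid f\eta)(x)-I(\zeta\mid f\zeta)(x),\qquad \phi(x):=I(\zeta\mid\eta)(x)=-\log m_x^\eta(\zeta(x))\ge0,$$
which follows from the tower property of conditional measures together with $I(f\zeta\mid f\eta)(x)=I(\zeta\mid\eta)(f^{-1}x)$ (using $f$-invariance of $m$). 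Since $\phi\ge0$ and both $I(\eta\mid f\eta)$ and $I(\zeta\mid f\zeta)$ are nonnegative, one applies the truncations $\phi_n:=\min(\phi,n)$: each $\phi_n\in L^1$ gives $\int(\phi_n\circ f^{-1}-\phi_n)\,dm=0$, a case-check shows $\phi_n\circ f^{-1}-\phi_n\le I(\eta\mid f\eta)$ (and symmetrically $\phi_n-\phi_n\circ f^{-1}\le I(\zeta\mid f\zeta)$), and reverse Fatou then yields $H(\eta\mid f\eta)\ge H(\zeta\mid f\zeta)$ whenever $H(\eta\mid f\eta)<\infty$, together with the reverse inequality whenever $H(\zeta\mid f\zeta)<\infty$. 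Equality (in $[0,\infty]$) follows in all cases, with no integrability assumption on $\phi$. So the idea you expected to be the ``main obstacle'' is indeed where the proposal breaks, and the fix is to replace the covering argument by this truncation/coboundary argument.
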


\subsection{Special partitions and the quotient structure}\label{section 3.2}
In this section we construct two special partitions following \cite[Section 3.2]{LEDRAPPIER_YOUNG_A}; one will be used to calculate the $u$-entropy and the other is to calculate the metric entropy. We also discuss the quotient structure between them.

Now assume $\delta\leq\min\{\delta_0,\frac{r_1}{100C^N_fC_0^{10}}\} := \delta_1$. We choose a finite entropy partition $\tP$ adapted to $(A(x),\delta)$ as in \Cref{adapted partition exist} and we further require that $\tP$ refines $\{S,M-S\}$.
Define $\eta_1=\xi\vee\tP^+$ and $\eta_2=\tP^+$. With the same notations as before, we have:
\begin{lemma}\label{basic property of eta1 eta2}
The following properties hold:\\
    1) $\eta_1$ and $\eta_2$ are increasing, which means that $f\eta_1\prec\eta_1$ and $f\eta_2\prec\eta_2$.\\
    2) $\eta_2\prec\eta_1$.\\
    3) $\eta_2(x)\subseteq\exp_xS^{cu}_\delta(x)$, $\eta_1(x)\subseteq\exp_xW^u_{x,2\delta}(x)$ for $m$-a.e.$\ x$.\\
    4) $h_m(f,\eta_2)=h_m(f,\tP)$ and $h_m(f,\eta_1)=h^u_m(f)$.
\begin{proof}
    1), 2) and the first part of 3) are obvious.
    The second part of 3) follows from \Cref{lemma 10 the manifold property} B.2).
    See \cite[Section 3.2]{LEDRAPPIER_YOUNG_A} for details of the proof of 4).
\end{proof}
\end{lemma}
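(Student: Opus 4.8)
The plan is to verify the four properties essentially by unwinding the definitions $\eta_1 = \xi \vee \tP^+$ and $\eta_2 = \tP^+$, invoking the geometric lemmas from Section 2 where genuine content is needed. For 1), increasingness: $\tP^+ = \bigvee_{i\ge 0} f^i\tP$ is manifestly increasing since $f\tP^+ = \bigvee_{i\ge 1} f^i\tP \prec \tP^+$; and $\xi = \hat\xi^+$ is increasing by the same reasoning, so the join $\eta_1$ is increasing as well (a join of increasing partitions is increasing). For 2), $\eta_2 \prec \eta_1$ is immediate from $\eta_1 = \xi \vee \eta_2$. The first inclusion in 3), $\eta_2(x) = \tP^+(x) \subseteq \exp_x S^{cu}_\delta(x)$, is exactly the defining property of a partition adapted to $\{A(x),\delta\}$, which $\tP$ was chosen to satisfy via \Cref{adapted partition exist}.

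The only point requiring an argument is the second half of 3), namely $\eta_1(x) \subseteq \exp_x W^u_{x,2\delta}(x)$ for $m$-a.e.\ $x$. Here I would argue as follows. Since $\eta_1 = \xi \vee \tP^+$, we have $\eta_1(x) \subseteq \xi(x) \cap \tP^+(x)$. By the previous paragraph $\tP^+(x) \subseteq \exp_x S^{cu}_\delta(x) \subseteq \exp_x S^{cu}_{2\delta}(x)$. On the other hand, $\xi(x) = \hat\xi^+(x) \subseteq \hat\xi(x)$, and by the construction of $\hat\xi$ in \eqref{e.30}, whenever $x \in S$ the atom $\hat\xi(x)$ is the disk $D_\alpha \subseteq \exp_{x_\alpha} W^u_{x_\alpha,\delta_0}(x_\alpha)$ containing $x$; in particular $\xi(x) \subseteq W^u(x)$ (the global unstable manifold), because $\xi$ is subordinate to $W^u$. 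Combining, for a.e.\ $x$ we get
\begin{equation*}
\eta_1(x) \subseteq \big(\exp_x S^{cu}_{2\delta}(x)\big) \cap W^u(x) = \exp_x\!\big(S^{cu}_{2\delta}(x) \cap \exp_x^{-1} W^u(x)\big) \subseteq \exp_x W^u_{x,2\delta}(x),
\end{equation*}
where the last inclusion is precisely \Cref{lemma 10 the manifold property} B.2) applied with $y = x$ (valid for $m$-a.e.\ $x$, as that lemma requires). One should double-check that the atom $\eta_1(x)$, being contained in a single $\tP^+$-atom, genuinely lands in the local unstable manifold of its own base point rather than that of some $x_\alpha$; this is handled by the extra reduction of $r_{A_0}$ made just before the construction of $S$, ensuring $V_{loc}(y)\cap B(a_0,r) \subseteq \exp_x W^u_{x,2\delta_0}(y)$ for points sharing a chart, together with the choice $\delta \le \delta_1 \le \delta_0$.

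For 4), the claim $h_m(f,\eta_2) = h_m(f,\tP)$ follows because $\eta_2 = \tP^+$ and, for an increasing partition, $H(\tP^+ \mid f\tP^+) = H(\tP \mid \bigvee_{i\ge 1} f^i\tP) = h_m(f,\tP)$ by the standard identity $h_m(f,\tP) = H(\tP \mid \tP^-_f)$ with the roles of past and future interchanged (using $f^{-1}$), together with finiteness of $H(\tP)$. The claim $h_m(f,\eta_1) = h^u_m(f)$ is the substantive statement: since $\eta_1$ is subordinate to $W^u$ by part 3) (it refines the subordinate partition $\xi$ while still containing an open neighborhood of $x$ in $W^u(x)$ — this last point needs that $\tP^+(x)$ contains such a neighborhood, which holds because $\tP$ is adapted and $\eta_1(x) = \xi(x)\cap\tP^+(x)$ still contains a relative neighborhood of $x$), the conclusion follows from the preceding lemma asserting that $h_m(f,\cdot)$ is the same for all partitions subordinate to $W^u$. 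As the excerpt indicates, the detailed bookkeeping for 4) is in \cite[Section 3.2]{LEDRAPPIER_YOUNG_A} and I would simply cite it.

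I expect the main obstacle to be the second part of 3): one must be careful that the subordinate partition $\xi$ and the adapted partition $\tP^+$ are "compatible" at the chart level so that $\eta_1(x)$ really sits inside one fake $cu$-chart centered at $x$ and hence inside $W^u_{x,2\delta}(x)$ rather than only inside some translate. This is exactly why $\tP$ was required to refine $\{S, M-S\}$ and why the radii $r_{A_0}$, $\delta$, $\delta_1$ were tuned as they were; assembling these choices correctly is the only real content, and everything else is formal.
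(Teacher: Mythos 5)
Your proof is correct and takes essentially the same route as the paper: parts 1), 2), and the first half of 3) are definitional, the second half of 3) is \Cref{lemma 10 the manifold property}~B.2) applied with $y=x$, and part 4) is deferred to \cite[Section 3.2]{LEDRAPPIER_YOUNG_A}. The closing worry about ``chart compatibility'' between $\xi$ and $\tP^+$ is unnecessary: once you have $\eta_1(x)\subseteq \exp_x S^{cu}_{2\delta}(x)\cap W^u(x)$, the inclusion $\exp_x^{-1}\eta_1(x)\subseteq S^{cu}_{2\delta}(x)\cap\exp_x^{-1}W^u(x)\subseteq W^u_{x,2\delta}(x)$ from B.2) already lands you in the $x$-chart with no further tuning, so the only geometric input is B.2) itself (whose hypothesis $x\in\Gamma'\cap\exp_xS^{cu}_\delta(x)$, $fx\in\exp_{fx}S^{cu}_\delta(fx)$ holds trivially since $0\in S^{cu}_\delta$).
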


Below we present the quotient structure of $\eta_2(x) /\eta_1 $. The following lemmas are similar to \cite[Lemma 3.3.1]{LEDRAPPIER_YOUNG_A} and \cite[Lemma 3.3.2]{LEDRAPPIER_YOUNG_A}.
\begin{lemma}\label{lemma 28}
    For $m$-a.e.$\ x$ and any $y\in\Gamma'\cap\eta_2(x)$, $\exp_xW^u_{x,2\delta}(y)\cap \eta_2(x)=\eta_1(y)$.
\begin{proof}\label{first quotient structure}
    We first prove that $\exp_xW^u_{x,2\delta}(y)\cap \eta_2(x)\subseteq\eta_1(y)$.
    If $z\in \exp_xW^u_{x,2\delta}(y)\cap \eta_2(x)$, we only need to show that $z\in\xi(y)$, which holds if $d^u(f^{-n}y,f^{-n}z)\leq r_1$ whenever $f^{-n}y\in S$ (see Item d) above Equation \eqref{e.30}). Since $y,z\in\eta_2(x)$, by the local argument, we have
    \begin{equation*}
    \begin{aligned}
        d^u(f^{-n}y,f^{-n}z)
        &\leq 4|\exp_{f^{-n}x}^{-1}f^{-n}y-\exp_{f^{-n}x}^{-1}f^{-n}z|\\
        &\leq 4C^N_f\mathrm{e}^{(-\lambda^u+3\epsilon)}A(x)|\exp_x^{-1}y-\exp_x^{-1}z|\\
        &\leq 4C_0C^N_f\mathrm{e}^{(-\lambda^u+3\epsilon)}A(x)\cdot 2\delta A(x)^{-1}\\
        &\leq 8C_0C^N_f\mathrm{e}^{(-\lambda^u+3\epsilon)}\frac{r_1}{100C^N_fC_0^{10}}\\
        &\leq r_1.
    \end{aligned}
    \end{equation*}
    Hence it follows that $\exp_xW^u_{x,2\delta}(y)\cap \eta_2(x)\subseteq\eta_1(y)$.

    Then we prove that $\eta_1(y)\subseteq \exp_xW^u_{x,2\delta}(y)\cap \eta_2(x)$. $\eta_1(y)\subseteq \eta_2(x)$ is obvious, and $\eta_1(y)=\xi(y)\cap \eta_2(y)\subseteq \exp_xW^u_{x,2\delta}(y)$ follows from the construction of $\xi$, Lemma \ref{basic property of eta1 eta2} 3) and \Cref{lemma 10 the manifold property} B.2).
\end{proof}
\end{lemma}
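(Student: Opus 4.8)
The lemma identifies, for a typical $x$ and any $y\in\Gamma'\cap\eta_2(x)$, the atom $\eta_1(y)$ with the slice of $\eta_2(x)$ cut out by the local unstable leaf through $y$. The plan is to first reduce the equality to a statement taking place entirely inside the fixed atom $\eta_2(x)$, and then establish the two inclusions. For the reduction: since $y\in\eta_2(x)$ and $\eta_2$ is a partition, $\eta_2(y)=\eta_2(x)$, so from $\eta_1=\xi\vee\tP^+$ and $\eta_2=\tP^+$ we get $\eta_1(y)=\xi(y)\cap\eta_2(x)$; thus it suffices to show that for $z\in\eta_2(x)$ one has $z\in\exp_xW^u_{x,2\delta}(y)$ if and only if $z\in\xi(y)$. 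By \Cref{lemma 8 the size}, $\exp_xW^u_{x,2\delta}(y)$ is a piece of the true unstable manifold $W^u(y)$, a fact I will use on both sides. Throughout, $x$ is restricted to the full-measure set on which \Cref{basic property of eta1 eta2}, \Cref{lemma 10 the manifold property}, adaptedness of $\tP$, and Poincar\'e recurrence all hold simultaneously.

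\emph{The inclusion $\exp_xW^u_{x,2\delta}(y)\cap\eta_2(x)\subseteq\eta_1(y)$.} Fix such a $z$; I must show $z\in\xi(y)=\hat\xi^+(y)$, i.e.\ that $f^{-n}z$ and $f^{-n}y$ lie in a common atom of $\hat\xi$ for all $n\ge0$. Since $\tP$ refines $\{S,M-S\}$ and $y,z$ share an atom of $\tP^+$, the points $f^{-n}y,f^{-n}z$ are always on the same side of $\{S,M-S\}$, so the only case is $f^{-n}y\in S$, where one needs $f^{-n}z$ in the same disk $D_\alpha$. As distinct local unstable leaves meeting $S$ are $d^u$-separated by more than $r_1$ (property d) above \eqref{e.30}), and $z\in W^u(y)$ keeps $f^{-n}z,f^{-n}y$ on a common global unstable leaf, this comes down to $d^u(f^{-n}y,f^{-n}z)\le r_1$ for every $n\ge0$. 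That bound is precisely an instance of the local argument (\Cref{local argument}): backward contraction along $E^u$ from \Cref{ABC ergodic lemma} together with the slope control of \Cref{fake foliations} gives $d^u(f^{-n}y,f^{-n}z)\le (\mathrm{const})\cdot C_f^N\mathrm{e}^{(-\lambda^u+3\epsilon)n}A(x)\,|\exp_x^{-1}y-\exp_x^{-1}z|'$, and since $y,z\in\exp_xS^{cu}_\delta(x)$ (by \Cref{basic property of eta1 eta2}) forces $|\exp_x^{-1}y-\exp_x^{-1}z|'\le2\delta A(x)^{-1}$, the estimate collapses to a constant multiple of $\delta$, which is $\le r_1$ by the choice $\delta\le\delta_1=\min\{\delta_0,\,r_1/(100C_f^NC_0^{10})\}$.

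\emph{The inclusion $\eta_1(y)\subseteq\exp_xW^u_{x,2\delta}(y)\cap\eta_2(x)$.} That $\eta_1(y)\subseteq\eta_2(x)$ is immediate from $\eta_1(y)=\xi(y)\cap\eta_2(x)$. Given $z\in\eta_1(y)$, we have $z\in\xi(y)\subseteq W^u(y)$ since $\xi$ is subordinate to $W^u$, and $z\in\eta_2(x)\subseteq\exp_xS^{cu}_\delta(x)\subseteq\exp_xS^{cu}_{2\delta}(x)$ by \Cref{basic property of eta1 eta2}; hence $\exp_x^{-1}z\in S^{cu}_{2\delta}(x)\cap\exp_x^{-1}W^u(y)$, which \Cref{lemma 10 the manifold property}~B.2) places inside $W^u_{x,2\delta}(y)$, giving $z\in\exp_xW^u_{x,2\delta}(y)$. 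Invoking B.2) here requires its hypotheses at $x$ (and, in its proof, along the backward orbit of $x$); these are supplied by $y\in\Gamma'\cap\eta_2(x)\subseteq\exp_xS^{cu}_\delta(x)$ together with the backward invariance of the $S^{cu}_\delta$-condition and the Poincar\'e-recurrence set, once $x$ lies in the full-measure set fixed above.

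I expect the genuine content to be the backward length estimate in the first inclusion, but this is a direct instance of the local argument already in hand, with $\delta_1$ chosen precisely so that no $d^u$-distance along the negative orbit exceeds $r_1$; everything else — the reduction via $\eta_2(y)=\eta_2(x)$, the use of the refinement $\tP\succ\{S,M-S\}$ to dispose of the $M-S$ case, and the invocation of \Cref{lemma 10 the manifold property}~B.2) — is formal. The only real friction is the customary bookkeeping: pinning down one full-measure set of base points $x$ on which all the cited lemmas and the recurrence hypothesis apply at once.
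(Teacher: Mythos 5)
Your proof is correct and follows the same route as the paper: the first inclusion by verifying $d^u(f^{-n}y,f^{-n}z)\le r_1$ via the local argument together with the separation property d) of $S$, and the second inclusion by combining the subordinate property of $\xi$, Lemma~\ref{basic property of eta1 eta2}~3), and Lemma~\ref{lemma 10 the manifold property}~B.2). You spell out a couple of steps the paper leaves implicit (that $\tP\succ\{S,M-S\}$ handles the $M-S$ case, and that the hypotheses of B.1)/B.2) propagate along the backward orbit because $S^{cu}_\delta$ is backward-invariant), but these are exactly the bookkeeping the paper is compressing, not a different argument.
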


\begin{lemma}\label{quotient structure}
For $m$-a.e.$\ x$ and any $y\in\Gamma'\cap \eta_2(x)$, $f^{-1}(\eta_1(y))=\eta_1(f^{-1}y)\cap f^{-1}(\eta_2(x))$.
\begin{proof}
    By \Cref{basic property of eta1 eta2}, we have $f^{-1}\eta_1(y)\subseteq f^{-1}\eta_2(y)$ and $f^{-1}\eta_1(y)\subseteq \eta_1(f^{-1}y)$. Hence we have  $f^{-1}(\eta_1(y))\subseteq\eta_1(f^{-1}y)\cap f^{-1}(\eta_2(x))$.

    To prove $\eta_1(f^{-1}y)\cap f^{-1}(\eta_2(x))\subseteq f^{-1}(\eta_1(y))$, it suffices to show
    \begin{equation*}
      f(\eta_1(f^{-1}y))\cap \eta_2(x)\subseteq \eta_1(y)=\exp_xW^u_{x,2\delta}(y)\cap \eta_2(x)  
    \end{equation*}
    then apply the previous lemma. Since $\eta_1(f^{-1}y)\subseteq \exp_{f^{-1}x}W^u_{f^{-1}x,2\delta}(y)$, we only need 
    \begin{equation*}
        f(\exp_{f^{-1}x}W^u_{f^{-1}x,2\delta}(y))\cap \eta_2(x)\subseteq \exp_xW^u_{x,2\delta}(y)\cap \eta_2(x);
    \end{equation*}
    this follows directly from \Cref{lemma 10 the manifold property} B.1).
\end{proof}
\end{lemma}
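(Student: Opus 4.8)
The plan is to establish the two inclusions separately, handling $\subseteq$ by soft partition arguments and $\supseteq$ by the forward invariance of local unstable leaves inside the fake charts.

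For $f^{-1}(\eta_1(y))\subseteq\eta_1(f^{-1}y)\cap f^{-1}(\eta_2(x))$ I would use only \Cref{basic property of eta1 eta2}. Since $\eta_1$ is increasing, $f\eta_1\prec\eta_1$, which is equivalent to $f^{-1}\eta_1(y)\subseteq\eta_1(f^{-1}y)$. Since $\eta_2\prec\eta_1$ and $y\in\eta_2(x)$ gives $\eta_2(y)=\eta_2(x)$, we also get $f^{-1}\eta_1(y)\subseteq f^{-1}\eta_2(y)=f^{-1}\eta_2(x)$. Intersecting yields the inclusion; no geometry is needed here.

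The content is in the reverse inclusion $\eta_1(f^{-1}y)\cap f^{-1}(\eta_2(x))\subseteq f^{-1}(\eta_1(y))$. Applying the bijection $f$, it is equivalent to $f(\eta_1(f^{-1}y))\cap\eta_2(x)\subseteq\eta_1(y)$, and \Cref{lemma 28} at $x$ identifies the right-hand side with $\exp_xW^u_{x,2\delta}(y)\cap\eta_2(x)$. First I would note that, because $\eta_2$ is increasing, $y\in\eta_2(x)$ forces $f^{-1}y\in\eta_2(f^{-1}x)$; hence \Cref{basic property of eta1 eta2}, item 3, together with \Cref{lemma 28} applied at $f^{-1}x$, gives $\eta_1(f^{-1}y)\subseteq\exp_{f^{-1}x}W^u_{f^{-1}x,2\delta}(f^{-1}y)$. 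So it suffices to prove
\[
f\bigl(\exp_{f^{-1}x}W^u_{f^{-1}x,2\delta}(f^{-1}y)\bigr)\cap\eta_2(x)\subseteq\exp_xW^u_{x,2\delta}(y),
\]
and this is exactly \Cref{lemma 10 the manifold property} B.1) applied at the base point $f^{-1}x$: its hypotheses $f^{-1}y\in\exp_{f^{-1}x}S^{cu}_\delta(f^{-1}x)$ and $y\in\exp_xS^{cu}_\delta(x)$ hold because each atom of $\eta_2$ sits inside the corresponding $\exp_\cdot S^{cu}_\delta(\cdot)$ by \Cref{basic property of eta1 eta2}, item 3, and the ambient cutoff $\exp_xR_x(2\delta A(x)^{-1})$ appearing in that lemma contains $\eta_2(x)$, so intersecting with $\eta_2(x)$ loses nothing.

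I expect the only real obstacle to be bookkeeping: one must intersect the finitely many full-measure sets appearing in \Cref{basic property of eta1 eta2}, \Cref{lemma 28} and \Cref{lemma 10 the manifold property} together with their $f^{-1}$-images, and keep track of the base-point shift $x\mapsto f^{-1}x$ while respecting the constraint $y\in\eta_2(x)$. Since $f$ is a diffeomorphism and $m$ is $f$-invariant, this is routine, and there is no genuine analytic difficulty beyond invoking the forward-invariance statement \Cref{lemma 10 the manifold property} B.1) and the quotient identification \Cref{lemma 28}.
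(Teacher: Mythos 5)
Your proposal is correct and follows the paper's own proof essentially verbatim: the easy inclusion via the increasing property of $\eta_1$ together with $\eta_2\prec\eta_1$, and the hard inclusion reduced to the forward-invariance statement \Cref{lemma 10 the manifold property} B.1) after identifying $\eta_1(y)$ via \Cref{lemma 28}. Your version is in fact slightly more careful than the paper's, correcting what looks like a typo ($W^u_{f^{-1}x,2\delta}(f^{-1}y)$ rather than $W^u_{f^{-1}x,2\delta}(y)$) and explicitly verifying the hypotheses of B.1) at the shifted base point $f^{-1}x$.
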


\subsection{Transverse metrics}\label{section 3.3}
Next, we describe a way to define transverse metrics through recurrence. As before, we use $u,c,s$ to denote the dimension of their corresponding subbundles.

Choose $E\subseteq S= \bigsqcup_\alpha D_\alpha$ such that $m(E)>0$ and let $\tau:D^{c+s}\rightarrow M$ be a $C^1$ embedding such that  $T :=  \mathrm{Im}\tau$ is transverse to each $D_\alpha\in S$, and $T\cap D_\alpha$ has exactly one point whenever $D_\alpha\cap E\neq\emptyset$. Here $D^{c+s}$ denotes a $(c+s)$-dimensional open unit disk. We further require that $\tP$, the finite entropy partition constructed at the beginning of this section, refines $\{E,M-E\}$.

Now we define $\pi^u:\bigcup_{n\geq 0}f^nE\rightarrow T$ as follows. 
When $x\in E\cap D_\alpha$, $\{\pi^u(x)\}=T\cap D_\alpha$. If $x\in (\bigcup_{n\geq 0}f^nE)- E$, let $\pi^u(x)=\pi^u(f^{-n_0(x)}x)$, where $n_0(x)>0$ is the smallest number such that $f^{-n_0(x)}x\in E$. Since $m$ is ergodic, $m(\bigcup_{n\geq 0}f^nE)=1$.

For $x\in\bigcup_{n\geq 0}f^nE$ and $y_1,y_2\in\eta_2(x)$, we define the transverse metric $d^T_x(\cdot,\cdot)$ inside $\eta_2(x)$ as $d^T_x(y_1,y_2)=|\tau^{-1}\pi^u y_1-\tau^{-1}\pi^u y_2|$. Since $\tP$ refines $\{E,M-E\}$ and $\eta_2 = \tP^+$, we see that $n_0(x)$, as defined above, is constant on each atom of $\eta_2$. In particular, the transverse metric on each atom of $\eta_2$ does not depend on the choice of $x$.

\section{Uncentered maximal functions}\label{section 4}


Recall that for general Radon measures, the Lebesgue density theorem is proven using centered maximal functions as in \cite[Section 4]{LEDRAPPIER_YOUNG_A}. In this paper, however, we consider the {\em uncentered maximal operator} as in \cite[Chapter 3]{Stein_book} to bypass the lack of Lipschitz regularity for the holonomy maps. In dimension one, it is well-known (see, for instance, \cite{PS83}) that the uncentered maximal operator is of weak type $(1,1)$, and therefore Lebesgue density theorem holds for uncentered balls. This is precisely where the one-dimensional hypothesis (or the multiplicity-one property of the Lyapunov exponents) is used.

Below, we only consider open coverings on $\mathbb R$, that is, $n=1$. In this case,  an uncentered ball w.r.t.\ $y$ is just a connected open interval that contains $y$. The next lemma has been used to prove \cite[Proposition 3.2]{Ledrappier_Strelcyn_1982}; the proof is omitted.

\begin{lemma}[One-dimensional covering lemma]\label{OCL} Assume that $E\subseteq \mathbb R$ or $E\subseteq S^1$ and $\tA$ is a finite  covering of $E$ by open intervals. Then there exists a subcovering $ \tA'\subseteq\tA$ of $E$ such that every $x\in E$ is contained in at most two intervals.
\end{lemma}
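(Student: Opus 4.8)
The plan is to pass to an \emph{irredundant} subcovering and then show that irredundancy forces every point to lie in at most two of its members. Since $\tA$ is finite, I would repeatedly discard any $I\in\tA$ whose removal still leaves a covering of $E$; after finitely many steps this yields $\tA'\subseteq\tA$ which covers $E$ and for which no single member can be deleted without uncovering a point of $E$. It then suffices to prove: if three \emph{distinct} members $I_1,I_2,I_3$ of $\tA'$ share a common point, then one of them lies inside the union of the other two — for then that member could have been deleted, contradicting irredundancy, and hence no point of $E$ lies in three or more members of $\tA'$.

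The core is an elementary fact about open intervals on $\mathbb R$: if $I_1,I_2,I_3$ are open intervals all containing a point $x$, then $I_k\subseteq I_i\cup I_j$ for some labeling with $\{i,j,k\}=\{1,2,3\}$. Writing $I_j=(a_j,b_j)$, so $a_j<x<b_j$, I would pick $p$ with $a_p=\min_j a_j$ and $q$ with $b_q=\max_j b_j$. If $p=q$, then $I_j\subseteq I_p$ for all $j$, and any $I_k$ with $k\neq p$ works. If $p\neq q$, let $r$ be the remaining index: from $a_p\le a_r<x<b_p$ one gets $(a_r,b_p)\subseteq I_p$, from $a_q<x<b_r\le b_q$ one gets $(a_q,b_r)\subseteq I_q$, and since $a_q<x<b_p$ these two sub-intervals overlap and their union contains $(a_r,b_r)=I_r$; hence $I_r\subseteq I_p\cup I_q$. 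This settles the real-line case.

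For $E\subseteq S^1$ I would argue identically after a harmless reduction: if some member of $\tA$ equals $S^1$, the irredundant subcover is a single arc and the claim is trivial; otherwise every member of $\tA'$ is a proper open arc. Given three distinct proper open arcs through a common point $x$, write $I_j$ as the arc running from $x-\ell_j$ to $x+\rho_j$ in arclength coordinates based at $x$, with $\ell_j,\rho_j>0$ and $\ell_j+\rho_j<$ the total length of $S^1$ (so no wraparound). Choosing $p$ maximizing $\ell_j$ and $q$ maximizing $\rho_j$, the same dichotomy applies: either $I_p$ contains the other two, or the third arc $I_r$ has its portion to the left of $x$ inside $I_p$ and its portion to the right of $x$ inside $I_q$, so again $I_r\subseteq I_p\cup I_q$; apply irredundancy.

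I expect the only point needing care to be the circle case, specifically keeping track of the fact that a proper open arc has a well-defined left and right extent from an interior point and that, since each arc is shorter than $S^1$, the relevant sub-arcs do not wrap around, so the union estimate behaves exactly as on $\mathbb R$. This bookkeeping is routine rather than deep, and once it is in place the irredundancy argument closes the proof in both settings.
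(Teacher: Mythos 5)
Your proof is correct. The paper does not actually supply a proof here — it cites Ledrappier–Strelcyn and states that "the proof is omitted" — so there is nothing to compare against, but your argument (pass to an irredundant subcover; then observe that among any three open intervals through a common point, one lies in the union of the other two, contradicting irredundancy) is the standard one, and your treatment of the circle case via arclength coordinates based at the shared point correctly handles the potential wraparound issue.
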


Now let $m$ be a probability measure on $\mathbb R$, $g\in L^1(m),g\geq 0$. We define the {\em uncentered} average, maximal, and minimal function as:
\begin{equation*}
    g^B(x)=\frac{1}{m(B)}\int_{B} g(z)\,\td m(z)\, \mbox{ where $B$ is an uncentered ball w.r.t.\ $x$,}
\end{equation*}
and
\begin{equation*}
    g^*(x)=\sup_{B\ni x}g^B(x)
\end{equation*}
\begin{equation*}
      g_*(x)=\inf_{B\ni x}g^B(x)
\end{equation*}
It is easy to see that $g^*$ (resp.\ $g_*$) is lower (resp.\ upper) semi-continuous and hence measurable.  

\begin{lemma}[Maximal-minimal functions estimates] For any $\lambda>0$, the following hold: \\
a) $\displaystyle m(g^*>\lambda)\leq \frac{2}{\lambda}\int g\,\td m$.\\
b) $\displaystyle\int_{\{g_*<\lambda\}}g\,\td m \leq 2\lambda$.
\begin{proof}
    a) is essentially \cite[Theorem (a)]{PS83} (with the constant $2$ instead of $5$ on the right-hand side due to Lemma \ref{OCL}) and is omitted. Below we prove b).


    Let $\mathfrak B=\{g_*<\lambda\}$. For every $x\in \mathfrak B$, we take $ B_x\ni x$ such that $g^{B_x}(x)<\lambda$, that is, $\int_{B_x}g\,\td m<\lambda m(B_x)$. Then $\{B_x:x\in\mathfrak B\}$ is an open covering of $\mathfrak B$, and there exists a countable subcovering $\{B_i:i\in \mathbb N\}$.

    Now $\displaystyle\int_\mathfrak{B}g\,\td m\leq\int_{\cup^\infty_{i=1}B_i}g\,\td m$ and we have $\displaystyle\int_{\cup^N_{i=1}B_i}g\,\td m \to \int_{\cup^\infty_{i=1}B_i}g\,\td m$ as $N\to\infty$. For each $N\in\mathbb N$, we treat $\{B_1,B_2\dots B_N\}$ as a finite subcovering of $\bigcup_{i=1}^N B_i$, and apply Lemma \ref{OCL} to obtain a finite subcovering $\{B_{i_1}, \cdots, B_{i_{M(N)}}\}$ of $\bigcup_{i=1}^N B_i$ that covers each point at most twice. We have
    \begin{align*}
		\int_{\cup^N_{i=1}B_i}g\,\td m \leq \sum_{j=1}^{M(N)}\int_{B_{i_j}}g\,\td m\leq \sum_{j=1}^{M(N)}\lambda m(B_{i_j})\leq 2\lambda
    \end{align*} 
	for every $N\in\mathbb N$. Sending $N\to\infty$, we have $\int_{\{g_*<\lambda\}}g\,\td m \leq 2\lambda$ as required.
\end{proof}
\end{lemma}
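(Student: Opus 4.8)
The plan is to mirror the Vitali-type argument used for part a), but replacing the weak-$(1,1)$ maximal inequality by a direct covering estimate for the minimal function. The role of part a)'s maximal inequality is played here by Lemma \ref{OCL}, the one-dimensional covering lemma: every point of the superlevel/sublevel set can be covered by a controlled ball, and the overlap can be reduced to multiplicity at most two.

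First I would set $\mathfrak B = \{g_* < \lambda\}$, which is open (upper semicontinuity of $g_*$), hence measurable. For each $x\in\mathfrak B$ the definition of $g_*$ as an infimum furnishes an uncentered ball $B_x\ni x$ with $g^{B_x}(x)<\lambda$, i.e. $\int_{B_x} g\,\td m < \lambda\, m(B_x)$. The family $\{B_x\}_{x\in\mathfrak B}$ is an open cover of $\mathfrak B$; since $\mathbb R$ (or $S^1$) is second countable, I extract a countable subcover $\{B_i\}_{i\in\mathbb N}$. Then $\int_{\mathfrak B} g\,\td m \le \int_{\bigcup_i B_i} g\,\td m$, and by monotone convergence $\int_{\bigcup_{i\le N} B_i} g\,\td m \to \int_{\bigcup_i B_i} g\,\td m$ as $N\to\infty$, so it suffices to bound $\int_{\bigcup_{i\le N}B_i} g\,\td m \le 2\lambda$ uniformly in $N$.

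The key step is, for fixed $N$, to regard $\{B_1,\dots,B_N\}$ as a finite cover of the bounded open set $\bigcup_{i\le N} B_i$ and apply Lemma \ref{OCL} to thin it to a subcover $\{B_{i_1},\dots,B_{i_{M(N)}}\}$ that covers every point at most twice. The at-most-two overlap gives $\sum_j \mathbf 1_{B_{i_j}} \le 2$ pointwise, hence
\begin{equation*}
\int_{\bigcup_{i\le N}B_i} g\,\td m \;\le\; \sum_{j=1}^{M(N)}\int_{B_{i_j}} g\,\td m \;\le\; \lambda\sum_{j=1}^{M(N)} m(B_{i_j}) \;\le\; 2\lambda\, m\Bigl(\bigcup_j B_{i_j}\Bigr)\;\le\; 2\lambda,
\end{equation*}
using the defining inequality $\int_{B_{i_j}} g\,\td m < \lambda\,m(B_{i_j})$ on each selected ball and $m$ being a probability measure. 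Letting $N\to\infty$ yields $\int_{\{g_*<\lambda\}} g\,\td m \le 2\lambda$.

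The only genuine subtlety — the ``main obstacle'' — is the passage from a countable cover to the finitely-covered, bounded-overlap subfamily: Lemma \ref{OCL} is stated for finite covers of a set, so one must first truncate to $\{B_1,\dots,B_N\}$, apply the lemma to the finite union $\bigcup_{i\le N}B_i$, and only afterwards let $N\to\infty$; the monotone convergence step is what licenses this truncation. (On $S^1$ one should also note the family may cover the whole circle, in which case the bound $2\lambda\,m(S^1)=2\lambda$ is still exactly what is claimed.) Everything else is bookkeeping, and I would omit the routine verifications of semicontinuity and measurability, as the paper already does.
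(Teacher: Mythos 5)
Your proof is correct and follows essentially the same argument as the paper: extract a countable cover from the defining balls, truncate to finite subfamilies $\{B_1,\dots,B_N\}$, apply Lemma \ref{OCL} to thin to bounded multiplicity two, sum the defining inequality, and pass to the limit in $N$. The only differences are cosmetic elaborations (recording that $\mathfrak B$ is open, inserting the intermediate bound $2\lambda\, m(\bigcup_j B_{i_j})$, and remarking on the $S^1$ case), none of which change the structure.
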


With this lemma, we can prove the following lemma as in \cite[Section 4.1]{LEDRAPPIER_YOUNG_A}.
\begin{lemma}\label{1 dim convergence lemma}
    Let $(X,m)$ be a Lebesgue space and $\pi:X\rightarrow\mathbb R$ a measurable map. Denote by $\{m_t\}_{t\in \mathbb R}$ the conditional measure of $m$ along the fibers $\pi^{-1}(t), t\in\mathbb R$. Let $\beta$ be a finite entropy partition of $X$. Given $t\in\mathbb R, A\in\beta$, we define $g^A(t)=m_t(A)$. $g^A:\mathbb R\rightarrow\mathbb R$.

    We define $g,g^B$ and $g_*:X\rightarrow\mathbb R$ as follows:
    \begin{align*}           &g(x)=\sum_{A\in\beta} \chi_A(x)g^A(\pi x),\\
    &g^B(x)=\sum_{A\in\beta} \chi_A(x)(g^A)^B(\pi x),\\ 
    &g_*(x)=\sum_{A\in\beta} \chi_A(x)(g^A)_*(\pi x),
    \end{align*}
    where $B$ is an uncentered ball w.r.t.\ $\pi x$.
    Then:\\
    a) $g^B(x)\to g(x)\ m$-a.e.\ as $\diam B\to 0.$\\
    b) $\int -\log  g_*\,\td m\leq H_m(\beta)+\log 2+1$.
    \begin{proof}
        See \cite[Lemma 4.1.3]{LEDRAPPIER_YOUNG_A}.
    \end{proof}
\end{lemma}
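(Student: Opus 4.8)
The plan is to adapt the argument from \cite[Lemma 4.1.3]{LEDRAPPIER_YOUNG_A} directly, substituting the uncentered maximal and minimal operators for the centered ones and invoking the one-dimensional covering lemma (\Cref{OCL}) and the Maximal-minimal functions estimates wherever the $C^2$ argument would use the Besicovitch or Vitali covering lemma in $\mathbb{R}^k$. First I would reduce part a) to a statement about each fixed atom $A\in\beta$: it suffices to show that $(g^A)^B(\pi x)\to g^A(\pi x)$ for $m$-a.e.\ $x$ as $\diam B\to 0$, since $\beta$ is countable (up to null sets) and $g,g^B$ are finite sums weighted by the indicators $\chi_A$. For this, note that $g^A=\mathbb{E}(\chi_A\mid \pi^{-1}\mathcal{B}(\mathbb{R}))$ pulled back via $\pi$, so letting $\nu=\pi_*m$ on $\mathbb{R}$, the function $g^A$ is (a version of) the density-type conditional expectation, and the statement $(g^A)^B(t)\to g^A(t)$ for $\nu$-a.e.\ $t$ as $\diam B\to 0$ is exactly the Lebesgue differentiation theorem for the Radon measure $\nu$ on $\mathbb{R}$ along uncentered intervals. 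That differentiation theorem is in turn a standard consequence of the weak type $(1,1)$ bound in part a) of the Maximal-minimal functions estimates together with density of continuous functions in $L^1(\nu)$: approximate $g^A$ in $L^1(\nu)$ by a continuous $\varphi$, for which the uncentered averages converge everywhere by uniform continuity, and control the error $g^A-\varphi$ using $m((g^A-\varphi)^*>\lambda)\le \tfrac 2\lambda\|g^A-\varphi\|_{L^1(\nu)}$.

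For part b), I would follow the scheme of \cite{LEDRAPPIER_YOUNG_A}: the estimate
\begin{equation*}
    \int -\log g_*\,\td m = \sum_{A\in\beta}\int_A -\log (g^A)_*(\pi x)\,\td m(x) = \sum_{A\in\beta}\int_{\mathbb{R}} -\log (g^A)_*(t)\,g^A(t)\,\td\nu(t)
\end{equation*}
reduces to bounding, for each $A$, the integral $\int_{\mathbb{R}} -\log (g^A)_* \cdot g^A\,\td\nu$. Since $0\le g^A\le 1$ and $0\le (g^A)_*\le g^A\le 1$, the integrand is nonnegative, and I would split the integral over the level sets $\{2^{-(j+1)}\le (g^A)_* < 2^{-j}\}$ for $j\ge 0$. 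On each such set $-\log (g^A)_*\le (j+1)\log 2$, and part b) of the Maximal-minimal functions estimates with $\lambda=2^{-j}$ gives $\int_{\{(g^A)_*<2^{-j}\}} g^A\,\td\nu \le 2\cdot 2^{-j}$; summing the resulting geometric-times-linear series yields $\int_{\mathbb{R}} -\log (g^A)_*\cdot g^A\,\td\nu \le c_1 H_\nu(\{A,\mathbb{R}\setminus A\}\text{-type term}) + c_2$, and then summing over $A\in\beta$ and using the standard inequality $\sum_A \int g^A\log(1/g^A)\,\td\nu \le H_m(\beta\mid \pi^{-1}\mathcal{B}) \le H_m(\beta)$ together with bookkeeping of the constants produces the bound $H_m(\beta)+\log 2 + 1$. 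This is exactly the computation in \cite[Lemma 4.1.3]{LEDRAPPIER_YOUNG_A}, with the only change being that the weak-type constant is $2$ rather than the dimensional Besicovitch constant, which is harmless.

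The main obstacle, and the only genuinely new point compared to \cite{LEDRAPPIER_YOUNG_A}, is checking that the uncentered differentiation theorem on $\mathbb{R}$ really does hold for an arbitrary Radon (here, finite Borel) measure $\nu=\pi_*m$, not merely for Lebesgue measure — that is, that no doubling hypothesis on $\nu$ is needed. This is true in dimension one: the Besicovitch covering theorem holds on $\mathbb{R}$ with a dimensional constant independent of the measure, hence the Hardy–Littlewood maximal function is of weak type $(1,1)$ with respect to any Radon measure, and this is precisely what \cite{PS83} and \Cref{OCL} encode via the ``at most two overlaps'' subcovering. So the whole argument goes through verbatim once one observes that every step of \cite[Lemma 4.1.3]{LEDRAPPIER_YOUNG_A} that used a covering lemma can be replaced by \Cref{OCL} and every step that used the centered maximal/minimal estimate can be replaced by the Maximal-minimal functions estimates above; the proof is then a routine transcription, which is why it is reasonable to cite \cite[Lemma 4.1.3]{LEDRAPPIER_YOUNG_A} for the remaining details as the paper does.
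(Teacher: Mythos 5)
Your proposal is correct and follows essentially the same route as the paper, which simply cites \cite[Lemma~4.1.3]{LEDRAPPIER_YOUNG_A}: you have unpacked that citation, correctly substituting the one-dimensional uncentered covering lemma (\Cref{OCL}) and the weak-type and minimal-function estimates for the Besicovitch-based centered arguments, and your identification of the only genuinely new point (uncentered differentiation holds for arbitrary Radon measures on $\mathbb{R}$, unlike in higher dimensions) is exactly right. A small check confirms the constant: writing $\nu=\pi_*m$ and using $\int_{\{(g^A)_*<\lambda\}}g^A\,\mathrm{d}\nu\le\min(2\lambda,m(A))$ inside $\int_0^\infty\nu\text{-mass of }\{(g^A)_*<e^{-s}\}\,\mathrm{d}s$ gives $\int-\log(g^A)_*\,g^A\,\mathrm{d}\nu\le m(A)\log 2-m(A)\log m(A)+m(A)$, and summing over $A\in\beta$ yields precisely $H_m(\beta)+\log 2+1$.
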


Finally, we recall a classical result in dimension theory. 
\begin{lemma}\label{dimension lemma}
    Let $m$ be a finite Borel measure on $\mathbb R^n$. Then for $m$-a.e.\ $x$, one has  \begin{equation*}
        \limsup_{\epsilon\rightarrow0} \frac{\log m(B(x,\epsilon))}{\log \epsilon}\leq n.
    \end{equation*}
\end{lemma}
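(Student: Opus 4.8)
The plan is to show that the exceptional set
\[
E=\Bigl\{x\in\mathbb R^n:\ \limsup_{\epsilon\to 0}\frac{\log m(B(x,\epsilon))}{\log\epsilon}>n\Bigr\}
\]
has $m$-measure zero. Writing $E=\bigcup_{j\ge 1}E_{1/j}$, where $E_\delta$ denotes the set of points at which the $\limsup$ exceeds $n+\delta$, it suffices to fix $\delta>0$ and prove $m(E_\delta)=0$; and since $m$ is finite it is enough to prove $m(E_\delta\cap Q_0)=0$ for each cube $Q_0$ in a dyadic exhaustion of $\mathbb R^n$, so we may assume all the action takes place inside a fixed large dyadic cube $Q_0$ of side $2^{-k_0}$.

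The key step is to re-express the local condition defining $E_\delta$ in terms of dyadic cubes, so that a crude counting argument applies. If $x\in E_\delta$ then there are arbitrarily small $\epsilon$ with $m(B(x,\epsilon))<\epsilon^{\,n+\delta}$. Given such a (small) $\epsilon$, pick $k\ge 1$ with $\sqrt n\,2^{-k}<\epsilon\le \sqrt n\,2^{-k+1}$ and let $Q_k(x)$ be the half-open dyadic cube of side $2^{-k}$ containing $x$. Since $\diam Q_k(x)=\sqrt n\,2^{-k}<\epsilon$ we have $Q_k(x)\subseteq B(x,\epsilon)$, hence
\[
m(Q_k(x))\le m(B(x,\epsilon))<\epsilon^{\,n+\delta}\le (2\sqrt n)^{\,n+\delta}\,2^{-k(n+\delta)}=:C_\delta\,2^{-k(n+\delta)}.
\]
As $\epsilon\to 0$ the associated $k$ tends to $\infty$, so $x$ lies in infinitely many of the sets
\[
G_k:=\bigcup\bigl\{Q:\ Q\ \text{dyadic cube of side }2^{-k},\ m(Q)<C_\delta\,2^{-k(n+\delta)}\bigr\}.
\]
Therefore $E_\delta\cap Q_0\subseteq\limsup_k(G_k\cap Q_0)$.

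The conclusion now follows from Borel--Cantelli. For $k\ge k_0$ the dyadic cubes of side $2^{-k}$ contained in $Q_0$ number exactly $2^{(k-k_0)n}$, so
\[
m(G_k\cap Q_0)\le 2^{(k-k_0)n}\cdot C_\delta\,2^{-k(n+\delta)}=2^{-k_0 n}C_\delta\,2^{-k\delta},
\]
which is summable in $k$. Hence $m\bigl(\limsup_k(G_k\cap Q_0)\bigr)=0$, so $m(E_\delta\cap Q_0)=0$; letting $Q_0$ exhaust $\mathbb R^n$ and then $\delta\to 0$ along $1/j$ gives $m(E)=0$, which is the assertion. There is no genuine obstacle here, the statement being classical; the only point that requires a little care is the comparison between Euclidean balls and dyadic cubes used to pass from $E_\delta$ to the $G_k$. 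One could instead run the argument directly with balls, replacing the cube count by the Besicovitch covering theorem applied to the fine cover $\{\,\overline B(x,\epsilon):x\in E_\delta,\ m(B(x,\epsilon))<\epsilon^{n+\delta}\,\}$, at the cost of invoking that theorem.
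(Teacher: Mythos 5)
The paper gives no proof of this lemma; it is invoked as a recalled classical fact from dimension theory, with neither an argument nor a citation supplied. Your proof is a correct, self-contained verification in the standard style. The dyadic-cube reformulation is sound: the inclusion $Q_k(x)\subseteq B(x,\epsilon)$ holds because $\diam Q_k(x)=\sqrt n\,2^{-k}<\epsilon$; the choice $\epsilon\le\sqrt n\,2^{-k+1}$ gives the constant $C_\delta=(2\sqrt n)^{n+\delta}$; the count of $2^{(k-k_0)n}$ dyadic subcubes of $Q_0$ at level $k\ge k_0$ together with the per-cube bound $C_\delta\,2^{-k(n+\delta)}$ yields $m(G_k\cap Q_0)\le 2^{-k_0 n}C_\delta\,2^{-k\delta}$, which is summable, so Borel--Cantelli applies; and the reduction to the countable union of the $E_{1/j}$ and to a dyadic exhaustion of $\mathbb R^n$ is standard. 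Your closing remark about Besicovitch is apt: the dyadic version avoids the covering theorem entirely at the mild cost of a bounded-ratio comparison between balls and cubes, and is the more elementary of the two routes. Since the authors omit the argument altogether, your proof simply supplies the step they leave to the reader.
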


\section{Proof of Main Theorem}\label{section 5}
In this section, we finish the proof of the Main Theorem. In \Cref{section 5.1} we give requirements for the size of a set $E$ to ensure some geometry properties. In \Cref{section 5.3} we prove the convergence arguments (\Cref{convergence arguments}) for a special family of uncentered balls. In \Cref{section 5.4} we finish the proof of the Main Theorem.

Before starting the proof, we briefly compare our proof in \Cref{section 5} with \cite[Section 5]{LEDRAPPIER_YOUNG_A} and explain how we circumvent the lack of Lipschitz regularity for the holonomy maps.

In \cite{LEDRAPPIER_YOUNG_A}, the authors use the Besicovitch covering lemma for centered balls; consequently, when they use the transverse metrics to define balls and iterate along the orbit of $x$, they need to use holonomy maps whenever $f^ix$ returns to $E$ (defined in \Cref{section 3.3} as in \cite[Section 3]{LEDRAPPIER_YOUNG_A}) to cover the image by a larger ball. This is to ensure that the images are always centered balls so the covering lemma is applicable. By the Birkhoff ergodic theorem, along a typical orbit of length $n$, the holonomy need to be used for approximately $m(E)n$ many times.

In our proof, the holonomy is only used twice (at $k=0$ and $k=n$). For intermediate returns to $E$, we use one-dimensional, uncentered balls when considering the maximal function, and the Besicovitch covering lemma is replaced by a one-dimensional covering lemma by uncentered intervals, namely Lemma \ref{OCL}. This is why the assumption $\dim E^c = 1$ is needed.

\subsection{Preparations}\label{section 5.1}
Let $E\subseteq S= \bigsqcup_\alpha D_\alpha$ be the positive measure set taken in Section \ref{section 3.3}, for which the measurable partitions $\eta_1,\eta_2$ and the projection $\pi^u$ are defined. In this section we specify certain requirements on how small $E$ must be so that some local geometry arguments hold.

With the same notations as in the previous sections, we set  $S'=S\cap\Lambda_{A_0}$. Choose a density point $\omega_0\in B(a_0,\epsilon(l)r)\cap \Lambda_{A_0}$. Following the definition of $\Lambda_{A_0}$, we can choose $E$ containing $\omega_0$ and of sufficiently small diameter, such that for some $\delta_{A_0}>0$ depending only on $A_0$, we have 
$$
E\subseteq \exp_xR_x(\delta_{A_0}A_0^{-1})
$$
for every $x\in E$. Such an $E$ exists once $\delta_{A_0}$ is fixed; the precise choice of $\delta_{A_0}$ will be provided later in this section; see \Cref{remark 6}.

We let $\tau :=  \exp_{\omega_0}|_{\{0\}\times R_{\omega_0}^{cs}(\delta_0A^{-1}_0)}$ and $T :=  \mathrm{Im}\tau$ as in \Cref{section 3.3}. For this $T$, there exists a constant $C_5>0$ such that 
\begin{equation*}
    C_5^{-1}d^T_{R^{c+s}}(\cdot,\cdot)\leq d^T_M(\cdot,\cdot)\leq C_5d^T_{R^{c+s}}(\cdot,\cdot)
\end{equation*}
where $ d^T_M(\cdot,\cdot)$ is the submanifold metric on $T$ and $d^T_{R^{c+s}}(\cdot,\cdot)$ is the $(c+s)$-dimensional Euclidean metric on $R^{cs}_{\omega_0}(\delta_0A_0^{-1})$ (instead of the box norm). Recall that $\delta_1\ll r_1\ll\delta_0$ where $\delta_1$ was defined in Section \ref{section 3.2}.

We now require $\delta_{A_0}<\frac{\delta_1}{10^6}$ sufficiently small so that for any $x\in E$, $\exp^{-1}_xT\cap R_x(\frac{1}{2}\delta_0A_0^{-1})$ is given by the graph of a function $\varphi:R_x^{cs}(\frac{1}{2}\delta_0A_0^{-1})\rightarrow R_x^u(\frac{1}{2}\delta_0A_0^{-1})$ with slope $\leq \frac{\gamma_0}{C_1}$. For sufficiently small $\delta_{A_0}$, 
$T$ is transverse to each $D_\alpha\subset S$ and $T\cap D_\alpha$ consists of exactly one point whenever the intersection is not empty.


Note that $\tW^{cu}_x(x)\cap R_x(\tfrac{1}{2}\delta_0A_0)$ and consequently 
	$\exp_x^{-1}T\cap\tW^{cu}_x(x)\cap R_x(\tfrac{1}{2}\delta_0A_0^{-1})$
	are both graphs of functions with slope $\leq \tfrac{\gamma_0}{C_1}$, the first from 
	$R_x^{cu}$ to $R_x^s$ and the second from $R_x^c$ to $R_x^{su}$ (all of scale $\frac{1}{2}\delta_0A_0^{-1}$). 
	We denote the $\exp_x$-image of the latter by $\overline L^c_x\subset T$, a one-dimensional $C^1$ manifold given by the intersection of $T$ with the $\exp_x$-image of $\tW^{cu}_x(x)$. 
	For sufficiently small $\delta_{A_0}$, 
	$\tau^{-1}\overline L^c_x\cap R_{\omega_0}(\tfrac{1}{4}\delta_0A_0^{-1})$ 
	is also the graph of a function $\varphi:R_{\omega_0}^c(\frac{1}{4}\delta_0A_0^{-1})\rightarrow R_{\omega_0}^{s}(\frac{1}{4}\delta_0A_0^{-1})$ with slope $\leq \frac{2\gamma_0}{C_1}$; we set 
	$L^c_x:=\overline L^c_x\cap \tau(R_{\omega_0}(\tfrac{1}{4}\delta_0A_0^{-1}))$.

We then consider the $u$-holonomy inside $\exp_xS^{cu}_\delta(x)\cap E\subseteq \exp_x\tW^{cu}_x(x)$ with $0<\delta\leq\frac{\delta_1}{100}$. For  $y\in \exp_xS^{cu}_\delta(x)\cap E$, $W^u_{x,2\delta_0}(y)$ is the graph of a function $\varphi:R^u_x(2\delta_0A_0^{-1})\rightarrow R_x^{cs}(2\delta_0A_0^{-1})$ with slope $\leq\frac{\gamma_0}{C_1}$. Since $\exp_x^{-1}\omega_0\in R_x(\delta_{A_0}A_0^{-1})\cap\exp_x^{-1}T$ and $\exp_x^{-1}y\in R_x(\frac{\delta_1}{100}A_0^{-1})$, the intersection 
$$
W^u_{x,2\delta_0}(y)\cap \exp_x^{-1}T\cap R_x\left(\frac{1}{4}\delta_0A_0^{-1}\right)    
$$
consists of a unique point, namely $\exp_x^{-1}\pi^u(y)$ as constructed in Section \ref{section 3.3}. 
For small enough $\delta_{A_0}$ we have 
$\tau^{-1}\pi^u(y)\in R_{\omega_0}(\tfrac{1}{4}\delta_0A_0^{-1})$, hence 
$\pi^u(\exp_xS^{cu}_\delta(x)\cap E)\subseteq L^c_x$.

Similarly, the same argument shows that if we denote the local fake leaves by $\tW^{*}_{x,\delta}(x) :=  \tW^{*}_x(x)\cap R_x(\delta A_0^{-1})$, $*\in\{s,c,u,cu,cs\}$, and the fake $u$-holonomy by $\tpi^u_x:\tW^{cu}_{x,\delta}(x)\rightarrow T$, then we have
\begin{equation*}
    \tpi^u_x(\tW^{cu}_{x,\delta}(x))\subseteq L^c_x
\end{equation*}
\Cref{lemma 8 the size} shows that $\tpi^u_x=\pi^u\circ\exp_x := \pi^u_x$ when restricted to $S^{cu}_\delta(x)\cap\exp_x^{-1}E$. By \Cref{uniform Hölder}, we have that $\exp^{-1}_x\tpi^u_x:\tW^{cu}_{x,\delta}(x)\cap\tW^{cs}_{x,\delta}(x)=\tW^c_{x,\delta}(x)\rightarrow\exp_x^{-1}L^c_x $ is uniformly $\alpha$-Hölder with $\alpha=1-\frac{7\epsilon}{\lambda^u}$ and Hölder constant $C_2=C_2(A_0)$.

\begin{remark}\label{remark 6}
To summarize, we need $\delta_{A_0}$ small enough to ensure that the following properties hold at every $x\in E$ (fixing a density point $\omega_0\in B(a_0,\epsilon(l)r)\cap \Lambda_{A_0}$):
\begin{enumerate}
\item[1.] $T = \mathrm{Im} \tau =  \exp_{\omega_0}\left({\{0\}\times R_{\omega_0}^{cs}(\delta_0A^{-1}_0)}\right)$ is transverse to each $D_\alpha\subseteq S$, and $T\cap D_\alpha$ consists of at most one point.
\item[2.]  $\tau^{-1}L^c_x$ is the graph of a smooth function $\varphi:R^c_{\omega_0}(\frac{1}{4}\delta_0A_0^{-1})\rightarrow R^{s}_{\omega_0}(\frac{1}{4}\delta_0A_0^{-1})$ with slope $\leq \frac{2\gamma_0}{C_1}$.

\item[3.] $\pi^u(\exp_xS^{cu}_\delta(x)\cap E)\subseteq L^c_x$, and $\tpi^u_x(\tW^{cu}_{x,\delta}(x))\subseteq L^c_x.
$
\end{enumerate}
\end{remark}

Finally, we choose $E$ of sufficiently small diameter and $0<\delta\leq\frac{\delta_1}{100}$ satisfying all the above conditions. By \Cref{adapted partition exist}, there exists a finite entropy measurable partition $\tP$ that refines both $\{S,M-S\}$ and $\{E,M-E\}$, is adapted to $\{A(x),\delta\}$, and satisfies $h_m(f,\tP)\geq h_m(f)-\epsilon$. We then define $\eta_1 :=  \xi\vee\tP^+$ and $\eta_2 :=  \tP^+$ as in \Cref{section 3.2}. By \Cref{section 3.2}, $\eta_2(x)/\eta_1$ has a quotient structure, and by \Cref{section 3.3}, the transverse metric $d^T(\cdot,\cdot)$ is defined for $m$-a.e.\ $x\in M$. 

Next we consider the uncentered balls for $y\in \bigcup_{n\ge 0}f^nE$. For this purpose, let $y\in \bigcup_{n>0}f^nE$ and $B\subset \tau^{-1}T$ be any ball that contains $\tau^{-1}\pi^uy$. We define
\begin{equation*}
    B^T:= \left\{z\in\bigcup_{n\geq 0}f^nE:n_0(z) = n_0(y),\mbox{ and }\tau^{-1}\pi^uz\in B \right\},
\end{equation*}
where, as in Section \ref{section 3.3}, $n_0(z)>0$ is the smallest number such that $f^{-n_0(z)}x\in E$.

\subsection{The convergence argument}\label{section 5.3}
Given an uncentered ball $B$ containing $\tau^{-1}\pi^uy$, we define two functions: $g:M\rightarrow\mathbb R$ and $g^B:M\rightarrow\mathbb R$ as
\begin{equation*}    g(y)=m^1_y((f^{-1}\eta_1)(y))=m^1_y((f^{-1}\eta_2)(y))
\end{equation*}
\begin{equation*}\label{definition of average function}
    g^B(y)=\frac{1}{m^2_y(B^T)}\int_{B^T}m^1_z((f^{-1}\eta_2)(z))\,\td m^2_y(z)
\end{equation*}
where $\{m^1_y\}$ and $\{m^2_y\}$ are the conditional measures of $m$ associated with the measurable partitions $\eta_1$ and $\eta_2$. Here the second equality in the definition of $g$ holds because of \Cref{quotient structure}. 
Note that $g$ is a positive measurable function.

We now explain why $g^B$ is measurable for every uncentered ball $B$. 
It suffices to check measurability on $E$, since the case 
$y\in\bigcup_{n>0} f^nE\setminus E$ follows by pulling back to $E$. On $E$, note that the map $z\mapsto m_z^1(f^{-1}\eta_2(z))$ belongs to $L^1(m)$; indeed, one can choose a filtration $\{\cF_n\}_{n=1}^\infty$ of finite measurable partitions that increases to $\eta_2$, and use the martingale convergence theorem on $m_z^1(f^{-1}\cF_n(z))$. As $y\mapsto m^2_y$ is measurable, it follows that both $\displaystyle y\mapsto \int_{B^T}m^1_z((f^{-1}\eta_2)(z))\,\td m^2_y(z)$ and $y\mapsto {m_y^2(B^T)}$ are measurable,  hence so is $g^B$.
A similar argument shows that if $y\mapsto B_y = B(x(y),r(y))$ is measurable (in the sense that $y\mapsto (x(y),r(y))\in T\times \mathbb R^+$ is measurable, then 
$$
y\mapsto g^{B_y}(y)
$$ is measurable.

We now introduce the set of good (uncentered) balls. Intuitively,  an $(s+c)$-dimensional ball $B(x,r)$ that contains $y$ is called a good ball, if $\tau^{-1}y$ is close to the line $x+ E^c_{\omega_0}$ where $\omega_0$ is the density point that defines $T$, chosen in Section \ref{section 5.1}.

\begin{definition}[Good balls]\label{good ball def}
For any $y\in T$, the $(c+s)$-dimensional uncentered ball $B(x,r)\subseteq\tau^{-1}T$ is called a good ball w.r.t.\ $y$, if $\{a,b\}=\{\tau^{-1}y+ E^c_{\omega_0}\}\cap\partial B(x,r)$, then the angle $\wideparen{ab}$ between the vectors $a-x$ and $b-x$ lies in $(\pi-100\gamma_0,\pi] $.  The collection of good balls w.r.t.\ $y$ is denoted by $GB(y)$.
\end{definition}
For any $x\in E$ we have $\exp_x^{-1}\eta_2(x)\subseteq S^{cu}_\delta(x)\cap \exp_x^{-1}E\subseteq\tW^{cu}_{x,\delta}(x)$. 
We have proven in previous sections that $\pi^u(\exp_xS^{cu}_\delta(x)\cap E)\subseteq L^c_x$, $\tpi^u_x(\tW^{cu}_{x,\delta}(x))\subseteq L^c_x$, $\pi^u_x=\tpi^u_x := \pi^u\exp_x$ in $S^{cu}_\delta(x)\cap\exp_x^{-1}E$ and $\tau^{-1}L^c_x$ is a graph of a function $\varphi:R_{\omega_0}^c(\frac{1}{4}\delta_0A_0^{-1})\rightarrow R_{\omega_0}^{su}(\frac{1}{4}\delta_0A_0^{-1})$ with slope $\leq \frac{2\gamma_0}{C_1}$.
\begin{remark}
Good balls have the following property:  
for any good ball $B$ w.r.t.\ $y\in \tau^{-1}\pi^u(\eta_2(x))\subseteq\tau^{-1}L^c_x$, we have that  $\tau^{-1}L^c_x\cap B$ is a connected open interval inside the one dimensional $C^1$ manifold $\tau^{-1}L^c_x\cong \mathbb R$.
\end{remark}

The goal of Section \ref{section 5.3} is to establish the following lemma:
\begin{lemma}[Convergence arguments]\label{convergence arguments}
	The following statements hold:
	  
\noindent a) For $m$-a.e.\ $y$ one has
 $$\displaystyle\lim_{B\in GB(y),\,\diam(B)\to0} g^{B}(y)=g(y).$$\\
b) $\displaystyle \int-\log \bar{g}_*(y)\,\td m(y)<\infty$ where $\bar{g}_*(y) := \inf_{\{B\in GB(y)\}}g^B(y)$.
\end{lemma}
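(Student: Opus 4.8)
\textbf{Proof plan for Lemma \ref{convergence arguments}.} The plan is to reduce both statements to the one-dimensional convergence lemma (\Cref{1 dim convergence lemma}) applied along the transversal $T$, using the fact that good balls, when intersected with the one-dimensional manifold $\tau^{-1}L^c_x$, are honest uncentered intervals. The first step is to set up the measurable map $\pi:\bigcup_{n\ge 0}f^nE\to\mathbb R$ by composing $\pi^u$ with $\tau^{-1}$ and then projecting onto the $E^c_{\omega_0}$-coordinate (using that $\tau^{-1}L^c_x$ is a graph over $R^c_{\omega_0}$ with small slope, so this projection is bi-Lipschitz onto its image with uniform constants). Via this identification, the partition $\eta_2$ plays the role of $\beta$, the conditional measures $\{m^2_y\}$ descend to a measure on $\mathbb R$ whose fiberwise conditionals are the $\{m^1_y\}$, and the function $g(y)=m^1_y(f^{-1}\eta_2(y))$ becomes exactly the function "$g$" of \Cref{1 dim convergence lemma} with the partition $\beta=f^{-1}\eta_2$ (which has finite entropy since $\tP$ does, and $f^{-1}\eta_2=f^{-1}\tP^+$ differs from $\tP^+$ by a finite refinement).

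For part a), given a sequence of good balls $B_k\in GB(y)$ with $\diam(B_k)\to 0$, I would observe that $B_k^T$ corresponds under the above identification to a sequence of uncentered intervals around $\pi(y)$ whose diameters also shrink to zero: this uses the "good ball" condition, which forces $\tau^{-1}L^c_x\cap B_k$ to be a connected interval containing $\tau^{-1}\pi^u(y)$ of comparable diameter (the angle condition $(\pi-100\gamma_0,\pi]$ guarantees the chord of $L^c_x$ through $y$ stretches across most of $B_k$, while the small-slope graph property prevents $L^c_x$ from leaving $B_k$ prematurely). Then $g^{B_k}(y)$ is precisely the uncentered average $(g^A)^{B_k'}(\pi y)$ appearing in \Cref{1 dim convergence lemma}a), so $g^{B_k}(y)\to g(y)$ for $m$-a.e.\ $y$. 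The measurability needed to apply the lemma was already checked in the paragraph preceding \Cref{good ball def}.

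For part b), I would bound $\bar g_*(y)=\inf_{B\in GB(y)}g^B(y)$ from below by the one-dimensional uncentered minimal function $g_*$ of \Cref{1 dim convergence lemma}: every good ball $B$ gives an uncentered interval $B'\ni\pi(y)$ with $g^B(y)\ge g^{B'}(\pi y)\ge g_*(y)$ — here one must check that restricting from arbitrary uncentered intervals to those coming from good balls only \emph{increases} the infimum, or more precisely that the averages over $B^T$ and over the corresponding interval $B'$ agree up to the (bi-Lipschitz, bounded-distortion) identification, so that $-\log\bar g_*\le -\log g_* + \mathrm{const}$. Then \Cref{1 dim convergence lemma}b) gives $\int -\log g_*\,\td m\le H_m(f^{-1}\eta_2)+\log 2+1<\infty$, and $H_m(f^{-1}\eta_2)<\infty$ because $\eta_2=\tP^+$ has the property that $f^{-1}\tP^+$ is subordinate between $\tP^+$-type partitions of finite conditional entropy; concretely $H_m(f^{-1}\eta_2\mid\eta_2)=0$ and $H_m(\eta_2\mid f\tP)<\infty$, giving the bound via standard entropy inequalities, so $\int -\log\bar g_*\,\td m<\infty$.

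\textbf{Main obstacle.} The technical heart is the geometric comparison between the two-dimensional (more precisely $(c+s)$-dimensional) good balls $B$ in $\tau^{-1}T$ and the one-dimensional uncentered intervals $B\cap\tau^{-1}L^c_x$ that actually carry the measure $m^2_y$ (since $\eta_2(x)$ projects into $L^c_x$, which is one-dimensional). I expect the delicate points to be: (i) showing the angle condition in \Cref{good ball def} indeed forces $\diam(B\cap\tau^{-1}L^c_x)\asymp\diam(B)$ with constants depending only on $\gamma_0$ and the uniform slope bound $2\gamma_0/C_1$, so that shrinking good balls correspond to shrinking intervals and conversely \emph{enough} intervals arise this way to control the infimum in part b); and (ii) verifying that the averaging in $g^B(y)$ over $B^T$ — which is defined via the full transversal $T$ and the level set $n_0(\cdot)=n_0(y)$ — genuinely reduces to averaging over an interval in $L^c_x$ with respect to the pushforward of $m^2_y$, using that $\eta_2$ is constant on $n_0$-levels (noted at the end of \Cref{section 3.3}) and that $\pi^u$ is injective on $S^{cu}_\delta(x)\cap\exp_x^{-1}E$ into $L^c_x$. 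Once these identifications are pinned down with uniform constants, both parts are immediate consequences of \Cref{1 dim convergence lemma}.
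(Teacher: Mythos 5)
Your overall strategy matches the paper's: apply the one-dimensional convergence lemma (\Cref{1 dim convergence lemma}) via the projection $\tau^{-1}\pi^u$ onto the one-dimensional transversal $\tau^{-1}L^c_x$, using the good-ball condition to ensure that $\tau^{-1}L^c_x\cap B$ is an honest uncentered interval, and your identification of the geometric comparison as the technical heart is exactly right. The paper is slightly more careful about where the lemma is applied — it states an intermediate result (\Cref{sub lemma of convergence}) for each fixed $x$, working on the Lebesgue space $(\eta_2(x), m^2_x)$ with the map $\tau^{-1}\pi^u:\eta_2(x)\to\tau^{-1}L^c_x$ and the finite-entropy partition $\beta = f^{-1}\eta_2|_{\eta_2(x)}$, and then integrates over $x$ as in LY Section 5.B; your write-up blurs this atom-by-atom structure a bit (and initially identifies $\eta_2$ rather than $f^{-1}\eta_2$ as $\beta$), but the underlying idea is the same.

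There is, however, a concrete error in your part~b). You claim ``$H_m(f^{-1}\eta_2\mid\eta_2)=0$,'' but this is false; since $\eta_2$ is increasing, $\eta_2\prec f^{-1}\eta_2$, so what vanishes is $H_m(\eta_2\mid f^{-1}\eta_2)$, not $H_m(f^{-1}\eta_2\mid\eta_2)$. The quantity $H_m(f^{-1}\eta_2\mid\eta_2) = H_m(\eta_2\mid f\eta_2) = h_m(f,\eta_2) = h_m(f,\tP)$ is in general strictly positive. It is finite because $\eta_2 = \tP\vee f\eta_2$, giving $H_m(\eta_2\mid f\eta_2) = H_m(\tP\mid f\eta_2)\le H_m(\tP)<\infty$. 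This matters because the bound you actually need, after integrating \Cref{sub lemma of convergence}~b) over $x$, is
$$\int -\log\bar g_*\,\td m\ \le\ H_m(f^{-1}\eta_2\mid\eta_2) + \log 2 + 1,$$
and the unconditional quantity $H_m(f^{-1}\eta_2)$ you write (which would be $H_m(\eta_2)$ plus the conditional term) is typically infinite, since $\eta_2=\tP^+=\bigvee_{i\ge 0}f^i\tP$. So replace the claim $H_m(f^{-1}\eta_2)<\infty$ with the conditional bound $H_m(f^{-1}\eta_2\mid\eta_2)\le H_m(\tP)<\infty$, and the argument closes.

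One small further remark: your inequality $g^B(y)\ge g^{B'}(\pi y)$ should in fact be an (approximate) equality under the bi-Lipschitz identification — the point, as you note, is just that restricting the infimum to the subfamily of intervals arising from good balls can only increase the infimum, giving $\bar g_*\ge g_*$ (up to a multiplicative constant from the identification), which is what you need.
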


As a starting point, we have the following lemma.
\begin{lemma}\label{sub lemma of convergence}
    For $m$-a.e.\ $x\in M$ and $m^2_x$-a.e.\ $y$ we have \\
    a) $$\lim_{B\in GB(y),\,\diam(B)\to0}g^{B}(y)=g(y).$$\\ 
    b) $\displaystyle \int-\log \bar{g}_*(y)\,\td m^2_x(y)\leq H_{m_x^2}(f^{-1}\eta_2)+\log 2+1$ where $\bar{g}_*(y) := \inf_{\{B\in GB(y)\}}g^B(y)$.
\end{lemma}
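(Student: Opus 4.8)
The statement of Lemma \ref{sub lemma of convergence} is a fiberwise (over a fixed atom of $\eta_2$) version of the convergence argument, and the strategy is to reduce it to the one-dimensional abstract statement, \Cref{1 dim convergence lemma}, applied on the atom $\eta_2(x)$ equipped with its transverse parametrization. First I would fix a typical $x$ and work inside the Lebesgue space $(X,m_x^2)$ with $X=\eta_2(x)$. The key observation is that, by the quotient structure established in \Cref{lemma 28} and \Cref{quotient structure}, the partition $f^{-1}\eta_1$ restricted to $\eta_2(x)$ coincides with the partition of $\eta_2(x)$ into pieces of the form $\exp_x W^u_{x,2\delta}(y)\cap\eta_2(x)$, intersected further with the $f^{-1}\eta_2$-atoms; the quotient $\eta_2(x)/\eta_1$ is therefore parametrized by the transverse coordinate $\tau^{-1}\pi^u$. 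Composing with $\tau^{-1}\pi^u_x$ and using that this map sends $\eta_2(x)$ into the one-dimensional $C^1$ curve $L^c_x$ (Section \ref{section 5.1}, Remark \ref{remark 6}), I obtain a measurable map $\pi:\eta_2(x)\to L^c_x\cong\mathbb R$ whose fibers are exactly the $\eta_1$-atoms. So the setup of \Cref{1 dim convergence lemma} is in place with $\beta = (f^{-1}\eta_2)|_{\eta_2(x)}$, $m = m_x^2$, and $g^A(t) = m_t(A)$ where $m_t$ is the conditional measure of $m_x^2$ along $\pi^{-1}(t)$, i.e.\ along $\eta_1$-atoms — and the latter conditional measures are precisely the $m_y^1$.

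\textbf{From abstract convergence to good balls.} \Cref{1 dim convergence lemma} a) gives convergence $g^B(x)\to g(x)$ as $\diam B\to 0$ for \emph{arbitrary} uncentered intervals $B$ in the one-dimensional parameter space. The next step is to match these abstract uncentered intervals with the images under $\tau^{-1}\pi^u$ of the good balls $B\in GB(y)$. Here I would use the Remark following \Cref{good ball def}: for a good ball $B$ with respect to $y\in\tau^{-1}\pi^u(\eta_2(x))\subseteq\tau^{-1}L^c_x$, the intersection $\tau^{-1}L^c_x\cap B$ is a connected open interval containing $\tau^{-1}\pi^u y$ — precisely an uncentered interval (one-dimensional uncentered ball) in $L^c_x\cong\mathbb R$. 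Moreover, by construction of $B^T$ (end of Section \ref{section 5.1}), the quantity $g^B(y)$ computed using the $(c+s)$-dimensional ball $B^T$ agrees with the one-dimensional average $(g^A)^{B'}(\pi x)$ where $B' = \tau^{-1}L^c_x\cap B$: this is because $\pi^u$ collapses $B^T\cap\eta_2(x)$ onto $B'$, the fibers of $\pi^u$ inside $\eta_2(x)$ are the $\eta_1$-atoms, and the measure disintegration is compatible. So $g^B(y)$ for good balls is literally a special case of the one-dimensional uncentered averages, and $\diam B\to 0$ forces $\diam B'\to 0$. Therefore part a) of the sub-lemma follows from \Cref{1 dim convergence lemma} a), and part b) follows from \Cref{1 dim convergence lemma} b): the infimum over $B\in GB(y)$ of $g^B(y)$ is bounded below by the one-dimensional uncentered minimal function $(g^A)_*(\pi y)$ of \Cref{1 dim convergence lemma}, so $\int -\log\bar g_*\,\td m_x^2 \le \int -\log g_*\,\td m_x^2 \le H_{m_x^2}((f^{-1}\eta_2)|_{\eta_2(x)}) + \log 2 + 1$, and the conditional entropy $H_{m_x^2}(f^{-1}\eta_2)$ is exactly $H_{m_x^2}((f^{-1}\eta_2)|_{\eta_2(x)})$ by definition of the conditional partition on an atom.

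\textbf{Main obstacle.} I expect the technical heart of the argument to be verifying carefully that the $(c+s)$-dimensional averages $g^B(y)$ genuinely reduce to one-dimensional averages along $L^c_x$, rather than picking up a spurious contribution from the $s$-directions transverse to $L^c_x$ inside $T$. This is where the quotient structure (\Cref{lemma 28}, \Cref{quotient structure}) and the fact that $\pi^u$ maps $\eta_2(x)\cap E$ into $L^c_x$ (Remark \ref{remark 6}, item 3) must be used in concert: one needs that the conditional measure $m_x^2$ disintegrated over the transverse coordinate is supported, fiberwise, on $\eta_1$-atoms, and that $B^T$ really only ``sees'' the slice of $B$ along $L^c_x$. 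One must also take care that $x\mapsto \pi^u x$ and the various conditional measures depend measurably on the parameters so that \Cref{1 dim convergence lemma}, which is stated for a single Lebesgue space with a measurable map to $\mathbb R$, can be applied on $m$-a.e.\ atom; this is a routine measurable-selection point but worth a sentence. Once the reduction is clean, both a) and b) are immediate consequences of \Cref{1 dim convergence lemma}, and the global statement \Cref{convergence arguments} will then follow (in the next portion of the paper) by integrating the fiberwise estimate over $x$ using $\int H_{m_x^2}(f^{-1}\eta_2)\,\td m(x) = H_m(f^{-1}\eta_2\mid\eta_2) = h_m(f,\eta_2) < \infty$.
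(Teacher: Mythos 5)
Your proposal is correct and follows essentially the same route as the paper's own (very terse) proof: both apply the one-dimensional abstract result (\Cref{1 dim convergence lemma}) to the Lebesgue space $(\eta_2(x),m_x^2)$ with the projection $\tau^{-1}\pi^u$ onto the curve $\tau^{-1}L^c_x\cong\mathbb R$, using the quotient structure of \Cref{lemma 28} to identify the fibers of $\pi^u|_{\eta_2(x)}$ with the $\eta_1$-atoms and observing that a good $(c+s)$-dimensional ball $B$ meets $\tau^{-1}L^c_x$ in a one-dimensional uncentered interval whose diameter tends to $0$ with $\diam B$, so that $g^B(y)$ equals a one-dimensional uncentered average and $\bar g_*\ge g_*$. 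The only point the paper flags that you leave implicit is the reduction for $x\in\bigcup_{n>0}f^nE\setminus E$, where one pulls the whole picture (transverse metric, $L^c$, good balls) back to the last return to $E$; this is worth one sentence but is exactly the observation the paper makes.
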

We omit the full proof, giving only the main idea.
Note that as $\diam B\to 0$, the connected open intervals $\tau^{-1}L^c_x\cap B$, which can be regarded as one-dimensional uncentered balls of $y$, have their diameters going to zero. Then we can apply \Cref{1 dim convergence lemma} to the system $(\eta_2(x),m_x^2,\tau^{-1}L^c_x)$ with the projection map $\tau^{-1}\pi^u:\eta_2(x)\rightarrow \tau^{-1}L^c_x$ to obtain the lemma for $m$-a.e.\ $x\in E$.
When $x\in \bigcup_{n\geq 0}f^nE-E$, just note that the transverse metric and good balls are both defined using the previous return to $E$. 

Next, we show that the minimal function  $\bar{g}_*:M\rightarrow \mathbb R$ is measurable; as before, we only consider $y\in E$. We claim that
\begin{equation}\label{equation 54}
    \bar{g}_*(y)=\inf_{\{B = B(x,r)\in GB(y)\}}g^B(y)=\inf_{\{B = B(x,r)\in GB(y),\ r\in\mathbb Q,\ x\in\mathbb Q^{c+s}\}}g^B(y).
\end{equation}
Indeed, any ball $B\subseteq \tau^{-1}T$ can be approximated by an increasing sequence of rational balls 
$B(x_k,r_k)$ with $x_k\in\mathbb Q^{c+s}$ and $r_k\in\mathbb Q$, satisfying 
$B(x_1,r_1)\subseteq B(x_2,r_2)\subseteq\cdots$ and $\bigcup_k B(x_k,r_k)=B$. By \Cref{definition of average function} and the outer regularity of $m_y^2$, we have  $g^{B(x_k,r_k)}(y)\rightarrow g^B(y)$ for $m$-a.e.\ $y\in E$ for which $B\in GB(y)$. Thus the two infima in \eqref{equation 54} agree, and since the right-hand side is an infimum of countably many measurable functions, $\bar g_*$ is measurable.
%
%
%
%

Having established the measurability of $g^B$ and $\bar{g}_*$, integrating both parts of \Cref{sub lemma of convergence} over $y\in M$ as in \cite[Section 5.B]{LEDRAPPIER_YOUNG_A} gives \Cref{convergence arguments}.

\subsection{The ergodic argument}\label{section 5.4}
In this section we complete the proof of the Main Theorem. 
Recall that $\exp_x^{-1}\eta_2(x)\subseteq S^{cu}_\delta(x)\subseteq\tW^{cu}_{x,\delta}(x)=\tW^{cu}_x(x)\cap R_x(\delta l(x)^{-1})$. $\tW^{cu}_{x,\delta}(x)$ is sub-foliated by $\tW^u_{x,\delta}(y)$, $y\in \tW^u_{x,\delta}(x)$, and $\exp^{-1}_x(\eta_2(x)/\eta_1)$ in the $x$-chart coincides with the $\tW^u_x(y)$ partitioning on $\exp_x^{-1}\eta_2(x)$ as shown in \Cref{lemma 28}. 
Recall also that for any $x\in E$, $\tW^c_x(x)$ is a one-dimensional graph of a function $\varphi: E^c_x\rightarrow E^{us}_x$.

\begin{figure}[h!]
	\centering
	\def\svgwidth{\columnwidth}
	\includegraphics{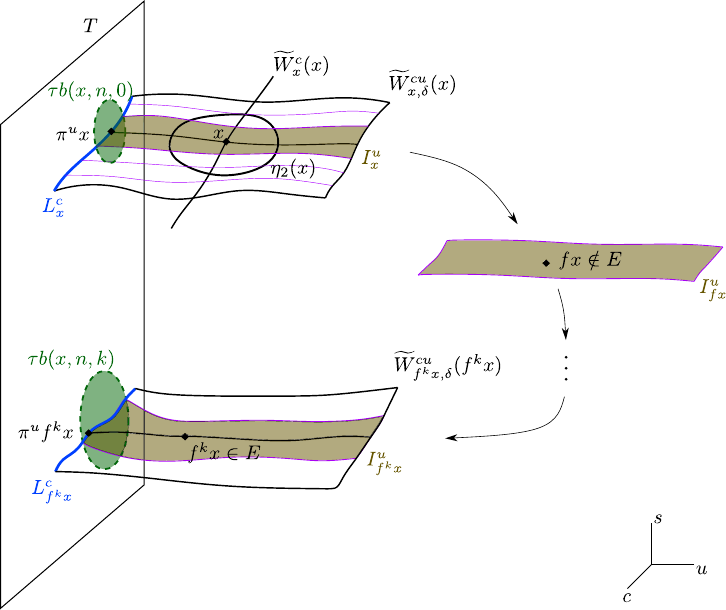}
	\caption{The construction of good balls $b(x,n,k)$.}
	\label{f.1}
\end{figure}

For $n\in \mathbb N$, we construct good balls $b(x,n,k)$ for $0\leq k\leq  [(1-\epsilon)n] :=  p$. When $k=0$,  the fake $u$-holonomy $\exp_x^{-1}\tpi^u_x: \tW^c_{x,\delta}(x)\rightarrow \exp_x^{-1}L^c_x$ is uniformly bi-Hölder as in \Cref{uniform Hölder}. We define 
\begin{equation}\label{e.rn}
r^n = 100^{-1}C_2^{-1}C_5^{-1}(\delta A_0^{-1}\mathrm{e}^{-3\epsilon n})^{\alpha^{-1}},
\end{equation}
\begin{equation*}
	b(x,n,0) :=  B(\tau^{-1}\pi^ux,r^n)\, \cap\,  R_{\omega_0}^{cs}(\delta_0A^{-1}_0),
\end{equation*}
and
\begin{equation*}
	a(x,n,0) := \Bigg( \bigcup_{z\in \tau b(x,n,0)\cap \tilde S}\hat\xi(z)\Bigg)\cap \eta_2(x),
\end{equation*}
where
\begin{equation*}
	\tS :=  \bigcup_{\hat\xi(y)\cap E\neq\emptyset}\hat\xi(y).
\end{equation*}
By definition, $\tau b(x,n,0)\subseteq T$ is an open neighborhood of $\pi^ux$ in $T$, and the diameter of $\exp_x^{-1}\tau b(x,n,0)$ in the $x$-chart is at most $C_5^{-1}(\delta A_0^{-1}\mathrm{e}^{-3\epsilon n})^{\alpha^{-1}}$.

Note that $\exp^{-1}_x\tau b(x,n,0)\cap\tW^{cu}_x(x)$ is an open interval in $\exp_x^{-1}L^c_x$ containing $x$, and when $n$ is large enough, we have $\exp^{-1}_x\tau b(x,n,0)\cap\tW^{cu}_x(x)\subseteq\exp_x^{-1}\tpi_x^u\tW^c_{x,\delta}(x)$.  We define
\begin{equation}\label{choice of Ix}
	I_x = (\exp_x^{-1}\tpi^u_x)^{-1}(\exp^{-1}_x\tau b(x,n,0))\cap\tW^{cu}_x(x) 
\end{equation}
Then we have $I_x\subseteq \tW^c_{x,\delta}(x)$ and by \Cref{uniform Hölder},
\begin{equation*}
	\mathrm{diam}(I_x)\leq \delta A_0^{-1}\mathrm{e}^{-3\epsilon n}.
\end{equation*}
We let $I^u_x :=  \bigcup_{z\in I_x}\{\tW^u_x(z)\}$, then $I^u_x\cap \tW^c_x(x)=I_x$. Since $\pi^u_x=\tpi^u_x$ in $\eta_2(x)$, we have $\exp_x^{-1}a(x,n,0)\subseteq I^u_x$.

Next we define $b(x,n,k)$ for $k>0$ using iterations of $I_x$. For this purpose, let $I_{f^nx} :=  \tf_x^n I_x$ and $I_{f^nx}^u :=  \tf_x^n I_x^u$, $n>0$. By the invariance of fake foliations, we have $I_{f^nx}=I^u_{f^nx}\cap \tW^c_{f^nx}(f^nx)$. The local argument (on center fake leaves) yields $|I_{f^kx}|\leq r_0\mathrm{e}^{-3\epsilon(n-k)}$ for $0\leq k\leq p$. Later we will send $n$ to infinity, along which $|I_{f^kx}|\to0$ uniformly in $x$.

For $0<k\leq p$ such that $f^kx\in E$, consider 
$$
\tilde I_{f^kx} :=I^u_{f^kx}\cap\exp_{f^kx}^{-1}T\cap R_{f^kx}\left(\frac{1}{4}\delta_0A_0^{-1}\right)\subseteq\exp_{f^kx}^{-1}L^c_{f^kx}.
$$ 
The inclusion holds when $n$ is large enough because of \Cref{uniform Hölder}.  which is a one-dimension open interval in $\exp_{f^kx}^{-1}L^c_{f^kx}$ containing $\exp_{f^kx}^{-1}\pi^u(f^kx)$. By \Cref{uniform Hölder} we have
\begin{equation*}
    \mathrm{diam}\left(\tilde I_{f^kx}\right)\leq C_2r_0^\alpha \mathrm{e}^{-3\epsilon\alpha(n-k)}
\end{equation*}
in the $f^kx$-chart. Hence
\begin{equation*}
      \mathrm{diam}\left(\tau^{-1}\exp_{f^kx}\tilde I_{f^kx}\right)\leq 2C_5C_2r_0^\alpha \mathrm{e}^{-3\epsilon\alpha(n-k)}
\end{equation*}
inside $\tau^{-1}T\subseteq \mathbb R^{c+s}$ with the Euclidean metric.

Note that $\tau^{-1}\exp_{f^kx}\tilde I_{f^kx}$ is an open arc in $\tau^{-1}L^c_{f^kx}$, and its endpoints determine a line segment that we use as a diameter to construct a ball $B\subseteq \tau^{-1}T$ with radius $r^n_k\leq C_5C_2r_0^\alpha \mathrm{e}^{-3\epsilon\alpha(n-k)}$. Now we define
\begin{equation*}
    b(x,n,k) :=B,
\end{equation*}
and
\begin{equation*}
    a(x,n,k) := \Bigg( \bigcup_{z\in \tau b(x,n,k)\cap \tilde S}\hat\xi(z)\Bigg)\cap \eta_2(f^kx).
\end{equation*}
See Figure \ref{f.1}.
We claim that $b(x,n,k)$ is a good ball w.r.t.\ $\tau^{-1}\pi^u f^kx$. 
This is because by \Cref{section 5.1}, $\tau^{-1}L^c_{f^{k}x}$ is a graph of a function $\varphi:R^c_{\omega_0}(\frac{1}{4}\delta_0A_0^{-1})\rightarrow R_{\omega_0}^{s}(\frac{1}{4}\delta_0A_0^{-1})$ whose slope is at most $\frac{2\gamma_0}{C_1}$.

Finally, if $f^kx\notin E$, we choose the maximal $k_1<k$ such that $f^{k_1}x\in E$. Then we let
\begin{equation*}
    a(x,n,k) :=  f^{k-k_1}a(x,n,k_1)\cap \eta_2(f^kx),
\end{equation*}
\begin{equation*}
    b(x,n,k) :=  b(x,n,k_1).
\end{equation*}
Hence $r^n_k:=\diam b(x,n,k) \leq C_5C_2r_0^\alpha \mathrm{e}^{-3\epsilon\alpha(n-k)}$ still holds, and $b(x,n,k)$ is a good ball for every $0\leq k\leq p$.

Our construction ensures the following property.
\begin{lemma} For every $0\le k < p$ one has
    $f(a(x,n,k))\cap\eta_2(f^{k+1}x)\subseteq a(x,n,k+1).$
\begin{proof}
Recall that $\eta_2(f^{k+1}x)\subseteq S_\delta^{cu}(f^{k+1}x)\subseteq \tW^{cu}_{f^{k+1}x,\delta}(f^{k+1}x)$. By \Cref{lemma 8 the size}, we have $W^u_{x,2\delta}(y)=\tW^u_{x,2\delta}(y)$ and $\pi^u_{f^{k+1}x}=\tpi^u_{f^{k+1}x}$ inside $\exp_{f^{k+1}x}^{-1}\eta_2(f^{k+1}x)$. We also have $\tf_xI^u_{f^kx}=I^u_{f^{k+1}x}$.

Then by \Cref{first quotient structure} and \Cref{quotient structure}, and the definition of $a(x,n,k)$, we have that $a(x,n,k)=\{y\in\eta_2(f^kx):\exp^{-1}_{f^kx}\tpi^u_{f^kx}\exp_{f^kx}^{-1}y\in I_{f^kx}\}$. Thus we obtain 
\begin{equation*}
a(x,n,k)=\exp_{f^kx}(I^u_{f^kx}\cap\exp^{-1}_{f^kx} \eta_2(f^kx)).    
\end{equation*}
The lemma follows by noting that $\tf_xI^u_{f^kx}=I^u_{f^{k+1}x}$ and $f(\eta_2(f^kx))\supseteq\eta_2(f^{k+1}x)$.
\end{proof}
\end{lemma}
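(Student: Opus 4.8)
The plan is first to extract an intrinsic description of the atoms $a(x,n,k)$ — one that no longer mentions $\hat\xi$ or $\tS$ — and then to push that description forward by one iterate. Concretely, I claim that for every $0\le k\le p$,
$$a(x,n,k)=\exp_{f^kx}\bigl(I^u_{f^kx}\cap\exp^{-1}_{f^kx}\eta_2(f^kx)\bigr).$$
At a time $k$ with $f^kx\in E$ this is obtained by unwinding the definitions: by the quotient structure of $\eta_2(f^kx)/\eta_1$ (\Cref{lemma 28}), an atom $\hat\xi(z)$ meeting $\eta_2(f^kx)$ intersects it exactly in a single local unstable plaque $\exp_{f^kx}W^u_{f^kx,2\delta}(y)\cap\eta_2(f^kx)=\eta_1(y)$; and since $\exp^{-1}_{f^kx}\eta_2(f^kx)\subseteq S^{cu}_\delta(f^kx)$, \Cref{lemma 8 the size} lets us replace the true unstable plaques and true holonomy $\pi^u_{f^kx}$ throughout by the fake ones $\tW^u_{f^kx}$, $\tpi^u_{f^kx}$. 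Hence $a(x,n,k)$ is precisely the set of $y\in\eta_2(f^kx)$ whose fake $u$-holonomy image lies in $I_{f^kx}$, which is the displayed set by the definition $I^u_{f^kx}=\bigcup_{z\in I_{f^kx}}\tW^u_{f^kx}(z)$.

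Next I would extend the identity to times $k$ with $f^kx\notin E$, using the one-step quotient structure (\Cref{quotient structure}) together with the recursive definition $a(x,n,k)=f^{k-k_1}a(x,n,k_1)\cap\eta_2(f^kx)$, where $k_1<k$ is the last return to $E$ before time $k$: since $\eta_2$ is increasing one has $\eta_2(f^kx)\subseteq f^{k-k_1}\eta_2(f^{k_1}x)$, and $k-k_1$ steps of the lifted dynamics carry $I^u_{f^{k_1}x}$ to $I^u_{f^kx}$ (invariance of the fake foliation under the \emph{true} lift, which is legitimate because the lengths $|I_{f^jx}|\le r_0\mathrm e^{-3\epsilon(n-j)}$ coming from the local argument keep everything inside the uniform scale $r_0$, where $\tf=\tg$ by \Cref{r.foliation.size}). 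So the displayed identity holds for all $0\le k\le p$.

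Granted the identity, the lemma follows immediately. Applying $f$ at level $k$ and using that one step of the lifted dynamics sends $I^u_{f^kx}$ to $I^u_{f^{k+1}x}$, one gets
$$f(a(x,n,k))\subseteq\exp_{f^{k+1}x}\bigl(I^u_{f^{k+1}x}\cap\exp^{-1}_{f^{k+1}x}f(\eta_2(f^kx))\bigr);$$
intersecting with $\eta_2(f^{k+1}x)$, which is contained in $f(\eta_2(f^kx))$ since $\eta_2$ is increasing, and comparing with the level-$(k+1)$ case of the identity gives $f(a(x,n,k))\cap\eta_2(f^{k+1}x)\subseteq a(x,n,k+1)$, as desired.

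\textbf{Expected main obstacle.} The delicate part is the bookkeeping across charts: the objects $I^u_{f^jx}$ and $\eta_2(f^jx)$ live in different tangent spaces, and the fake-foliation invariance that drives the argument holds only \emph{locally}, within radius $r_0$. The real work is to confirm that the chosen radii $r^n,r^n_k$ and the exponentially decaying lengths $|I_{f^jx}|$ keep every relevant set within that scale for all $0\le k\le p=[(1-\epsilon)n]$ once $n$ is large — i.e.\ that the asymptotic length control of the fake foliation charts genuinely substitutes here for the one-step Lyapunov-chart estimates of \cite{LEDRAPPIER_YOUNG_A}.
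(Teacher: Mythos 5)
Your proposal is correct and follows essentially the same route as the paper's proof: both hinge on the intrinsic identity $a(x,n,k)=\exp_{f^kx}(I^u_{f^kx}\cap\exp^{-1}_{f^kx}\eta_2(f^kx))$, established by combining \Cref{lemma 8 the size} and the quotient structure lemmas to replace the true plaques and holonomy with the fake ones, and then pushed forward by one iterate using $\tf_\cdot I^u_{f^kx}=I^u_{f^{k+1}x}$ and the fact that $\eta_2$ is increasing. Your explicit split of the identity's verification into the cases $f^kx\in E$ and $f^kx\notin E$, and your remark about staying within the uniform scale $r_0$ so that $\tf=\tg$, are exactly the bookkeeping the paper leaves implicit.
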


Below we carry out a calculation  similar to \cite[Section 5.D]{LEDRAPPIER_YOUNG_A}. First we have
\begin{equation*}
\begin{aligned}
    m^2_xa(x,n,0)
                 &\leq \prod_{k=0}^{p-1}\frac{m^2_{f^kx}a(x,n,k)}{m^2_{f^{k+1}x}a(x,n,k+1)}\\
                 &= \prod_{k=0}^{p-1}m^2_{f^kx}a(x,n,k)\cdot\frac{m^2_{f^kx}f^{-1}(\eta_2(f^{k+1}x))}{m^2_{f^{k}x}f^{-1}(a(x,n,k+1))}\\
                 &\leq \prod_{k=0}^{p-1}\frac{m^2_{f^kx}a(x,n,k)}{m^2_{f^kx}(f^{-1}\eta_2(f^kx)\cap a(x,n,k))}\cdot m^2_{f^kx}(f^{-1}\eta_2(f^{k}x))\\
                 &=\prod_{k=0}^{p-1}g^{b(x,n,k)}(f^kx)^{-1}\cdot \mathrm{e}^{-I_2(f^kx)}\
\end{aligned}
\end{equation*}
where $I_2(x) :=  -\log m^2_x(f^{-1}\eta_2(x))$ is the information function of $(f^{-1}\eta_2|\eta_2)$.

Taking logarithms, dividing by $n$, applying \Cref{dimension lemma} and keeping in mind Equation \eqref{e.rn}, we obtain
\begin{equation}\label{equation dimension limit}
\begin{aligned}
    \frac{3\epsilon}{\alpha}(s+1)&\geq \liminf_{n\to\infty} \frac{|\log m^2_xa(x,n,0)|}{n}\\
                                 &\geq \liminf_{n\to\infty}\frac{1}{n}\left(\sum_{k=0}^{p-1}\log g^{b(x,n,k)}(f^kx)+\sum_{k=0}^{p-1}I_2(f^kx)\right).   
\end{aligned}
\end{equation}

Since $b(x,n,k)$ are good balls and $k\le p = (1-\epsilon)n$, we have
\begin{equation*}
    r^n_k\leq  C_5C_2r_0^\alpha \mathrm{e}^{-3\epsilon\alpha(n-k)}\leq  C_5C_2r_0^\alpha \mathrm{e}^{-3\epsilon^2\alpha n}
\end{equation*} which tends to 0 as $n\to\infty$, uniformly in $x$ and $k$. We now apply the convergence arguments to complete the proof of the Main Theorem as in \cite[Section 5.E]{LEDRAPPIER_YOUNG_A}.

By \Cref{convergence arguments}, there exists a measurable function $r(x)>0$ such that for $m$-a.e.\ $x$, if $B\in GB(x)$ and $\diam(B)<r(x)$, then $-\log g^B(x)\leq-\log g(x)+\epsilon$. Moreover, since $\int-\log \bar{g}_*\,\td m<\infty$, there exists $r_*>0$ such that $A := \{x:r(x)\geq r_*\}$ satisfies $\int_{M-A}-\log \bar{g}_*\,\td m\leq\epsilon$.

Take $n$ large enough so that $r^n_k<r_*$ for $0\leq k\leq p$. Then
\begin{equation*}
\begin{aligned}
    \sum_{k=0}^{p-1}-\log g^{b(x,n,k)}(f^kx)&=\sum_{f^kx\in A}-\log g^{b(x,n,k)}(f^kx)+\sum_{f^kx\notin A}-\log g^{b(x,n,k)}(f^kx)\\
    &\leq \sum_{f^kx\in A}(-\log g(f^kx)+\epsilon)+\sum_{f^kx\notin A}-\log \bar{g}_*(f^kx).
\end{aligned}
\end{equation*}
By the Birkhoff ergodic theorem, for $m$-a.e.\ $x\in E$ it holds that
\begin{equation*}
\begin{aligned}
    \limsup_{n\rightarrow\infty}\frac{1}{n}\sum_{k=0}^{p-1}-\log g^{b(x,n,k)}(f^kx) &\leq 
    (1-\epsilon)\left(\int-\log g(z)\,\td m(z)+\epsilon+\epsilon\right).\\
    &=(1-\epsilon)(h_m(f,\eta_1)+2\epsilon).
\end{aligned}
\end{equation*}
Consequently,
\begin{equation*}
\begin{aligned}
\liminf_{n\rightarrow\infty}&\frac{1}{n}\left(\sum_{k=0}^{p-1}\log g^{b(x,n,k)}(f^kx)+\sum_{k=0}^{p-1}I_2(f^kx)\right)\\
\geq&-(1-\epsilon)(h_m(f,\eta_1)+2\epsilon)+(1-\epsilon)h_m(f,\eta_2)\\
\geq&-(1-\epsilon)(h^u_m(f)+2\epsilon)+(1-\epsilon)(h_m(f)-\epsilon).
\end{aligned}
\end{equation*}
Hence Equation \ref{equation dimension limit} becomes 
\begin{equation}\label{final property}
     \frac{3\epsilon}{\alpha}(s+1)\geq-(1-\epsilon)(h^u_m(f)+2\epsilon)+(1-\epsilon)(h_m(f)-\epsilon).
\end{equation}
Letting $\epsilon\rightarrow0$, we conclude that $h^u_m(f)\geq h_m(f)$. Since $h^u_m(f)\leq h_m(f)$ holds trivially, it follows that $h^u_m(f)=h_m(f)$, completing the proof of the Main Theorem.
\begin{remark}
    We make the following observations:\\
    a) The H\"older index $\alpha=1-\frac{7\epsilon}{\lambda^u}\rightarrow 1$ as $\epsilon\rightarrow0$.\\
    b) The term $s+c=s+1$ on the left-hand side of \Cref{equation dimension limit} can be replaced by $1$ because the quotient measure of $m^2_x$ is supported on the one-dimensional manifold $L^c_x$.\\
    These refinements are not needed for Part 1, as the left-hand side of \Cref{final property} tends to zero regardless; however, they will be important for the second part of this project.
\end{remark}

\section{Invariance principle and applications}\label{section6}
In this section, we present several applications of the Main Theorem by generalizing the invariance principle to $C^1$ diffeomorphisms with dominated splitting and one-dimensional center. 
\subsection{Invariance principle} Following  \cite{UVY}, a diffeomorphism $f$ is called partially hyperbolic  if the tangent bundle has a dominated splitting $ TM= E^u \oplus  E^c \oplus  E^s $ such that $ E^s $ is uniformly contracting and $ E^u $ is uniformly expanding. A partially hyperbolic diffeomorphism $f$ is dynamically coherent, if for $i=cs,cu$ there are invariant foliations $\cF^i$ tangent to $E^i$, where $E^{cs}=E^c\oplus E^s$ and $E^{cu}=E^c\oplus E^u$. $f$ is accessible if any two points $x,y\in M$ can be joined by a curve formed by finitely many arcs that are contained in leaves of $ \cF^u $ or $ \cF^s$.

\begin{definition}[Special partially hyperbolic diffeomorphisms]
	Denote by $\mathrm{SPH}(M)$ the set of partially hyperbolic, accessible, dynamically coherent $C^1$ diffeomorphisms with one-dimensional center direction, such that the center foliation $\cF^c$ forms a circle bundle and the quotient space $M_c:=M/\cF^c$ is a torus.
\end{definition}

Below we always assume $f\in \mathrm{SPH}(M)$. Let $f_c$ denote the map induced by $f$ on $M_c$, then $f_c$ is a topological Anosov homeomorphism in the sense of \cite[Section 1.3]{Viana08} or  \cite[Section 2.2]{VY13}.
By a result of Hiraide in \cite{hiraide90}, $f_c$ is conjugate to an Anosov automorphism on the torus. 
In particular $f_c$ is transitive and has a unique probability measure of maximal entropy, which we denote by $\nu$.

\begin{center}
	\begin{tikzcd}
		{( M, m)} \arrow[rrr, "f"] \arrow[dd, "\pi_c"] &  &  & {( M,m)} \arrow[dd, "\pi_c"] \\
		&  &  &                                  \\
		{(M/\cF^c,\hat m:=(\pi_c)_* m)} \arrow[rrr, "f_c"]                    &  &  & {(M/\cF^c,\hat m:=(\pi_c)_* m)}                    
	\end{tikzcd}
\end{center}

An invariant measure $m$ is called $u$-invariant if its conditional measures on the center fibers $\{m^c\}$ are invariant under the unstable holonomies for $m$-a.e.\ $x\in M$. 
With our Main Theorem, we can now obtain a $C^1$ version of the invariance principle in \cite[Corollary 2.1]{TY} directly from \cite[Theorem A]{TY}

\begin{theorem}\label{thm invprin}
	Let $f\in \mathrm{SPH}(M)$ and $m$ be an $f$ invariant probability measure. If the central Lyapunov exponent of $m$ is non-positive almost everywhere, then $m$ is $u$-invariant.
\end{theorem}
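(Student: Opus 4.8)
The plan is to derive the theorem from the Main Theorem combined with the entropy characterization of $u$-invariance of Tahzibi--Yang. Recall from \cite[Corollary 2.1]{TY} that, for $f\in\mathrm{SPH}(M)$, an invariant measure $m$ is $u$-invariant if and only if $h^u_m(f)=h^u_{\hat m}(f_c)$, where $h^u_{\hat m}(f_c)$ is the entropy of $\hat m=(\pi_c)_*m$ along the unstable foliation of $f_c$ on $M_c$. Thus it suffices to establish the identity $h^u_m(f)=h^u_{\hat m}(f_c)$. By ergodic decomposition---the hypothesis $\lambda^c\le 0$, the property of being $u$-invariant, and both unstable entropies all behave well under passage to ergodic components---it is enough to treat the case when $m$ is ergodic, which I assume from now on.

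First I would check that the Main Theorem applies and yields $h^u_m(f)=h_m(f)$. Since $f$ is partially hyperbolic, $E^u$ is uniformly expanding and $E^s$ uniformly contracting, so the Oseledets splitting of $m$ into positive/zero/negative exponent bundles has $E^+=E^u$, $E^s\subseteq E^-$, and the one-dimensional bundle $E^c$ is the only one carrying the exponent $\lambda^c$. If $\lambda^c=0$, then the positive/zero/negative splitting is exactly the dominated splitting $E^u\oplus E^c\oplus E^s$ with $\dim E^c=1$, and \Cref{main thm} gives $h^u_m(f)=h_m(f)$; if $\lambda^c<0$, then $m$ is hyperbolic with unstable bundle $E^u$ and the same equality holds by \cite[Proposition A.4]{UVYY2024} (cf.\ the Remark after \Cref{main thm}). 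In either case $W^u(x)$ is the strong unstable manifold tangent to $E^u$, which projects under $\pi_c$ onto the unstable foliation of $f_c$.

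Next I would dispose of the quotient side. Since $f_c$ is conjugate to a linear Anosov automorphism of the torus (Hiraide), and since for Anosov systems the unstable entropy coincides with the metric entropy---both being conjugacy invariants---we get $h^u_{\hat m}(f_c)=h_{\hat m}(f_c)$. The remaining, and crucial, point is the identity $h_m(f)=h_{\hat m}(f_c)$. The inequality $h_m(f)\ge h_{\hat m}(f_c)$ is automatic because $\hat m$ is a factor of $m$. For the reverse inequality I would invoke the Abramov--Rokhlin formula $h_m(f)=h_{\hat m}(f_c)+h_m\bigl(f\mid\pi_c^{-1}\mathcal B_{M_c}\bigr)$, in which the relative term is the entropy of $f$ along the center foliation $\cF^c$, whose leaves are exactly the $\pi_c$-fibers (circles). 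Since $\dim E^c=1$ and $\lambda^c\le 0$, a relative version of the Margulis--Ruelle inequality bounds this relative entropy by $\int\max\{0,\lambda^c\}\,\td m=0$; in other words, the center, being non-expanding, contributes no relative entropy over the quotient. Combining the above, $h^u_m(f)=h_m(f)=h_{\hat m}(f_c)=h^u_{\hat m}(f_c)$, and \cite[Corollary 2.1]{TY} then gives that $m$ is $u$-invariant.

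The main obstacle is this last step: the relative Margulis--Ruelle inequality for the factor $\pi_c$ at $C^1$ regularity, i.e.\ that a non-expanding one-dimensional center foliation with uniformly compact (circle) leaves carries no relative entropy over the quotient. The classical Margulis--Ruelle inequality is already valid for $C^1$ maps, and its relative counterpart over a factor whose fibers are uniformly compact and transverse to which the dynamics is non-expanding should follow from the standard covering-and-distortion argument adapted to the fibers; alternatively, one can quote the corresponding statement from the partial/relative entropy literature used in \cite{TY}. The remaining points---the affine dependence of all the entropies under ergodic decomposition, and the conjugacy invariance of $h^u$ for the topologically Anosov homeomorphism $f_c$---are routine and I would only remark on them.
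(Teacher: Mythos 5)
Your proposal is correct and follows the same skeleton as the paper's proof: combine the Main Theorem ($h^u_m(f)=h_m(f)$ when $\lambda^c\le 0$), the fact that $f_c$ is conjugate to a linear Anosov torus map (so $h^u_{\hat m}(f_c)=h_{\hat m}(f_c)$), the identity $h_m(f)=h_{\hat m}(f_c)$, and Tahzibi--Yang's entropy characterization of $u$-invariance. The one genuine deviation is how you establish $h_m(f)=h_{\hat m}(f_c)$, and it is worth pointing out why your route is over-engineered.

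You use the Abramov--Rokhlin decomposition $h_m(f)=h_{\hat m}(f_c)+h_m(f\mid\pi_c^{-1}\mathcal B_{M_c})$ and then try to kill the relative term with a ``relative Margulis--Ruelle inequality'' exploiting $\lambda^c\le 0$, flagging this as the main obstacle and worrying about its validity at $C^1$ regularity. In fact that relative term vanishes for a purely topological reason that has nothing to do with Lyapunov exponents or smoothness: the $\pi_c$-fibers are circles, so the fiberwise (Bowen) topological entropy $h(f,\pi_c^{-1}(y))$ is zero, and the Ledrappier--Walters relativized variational principle gives
\[
\sup_{(\pi_c)_*m=\hat m} h_m(f)=h_{\hat m}(f_c)+\int_{M_c}h\bigl(f,\pi_c^{-1}(y)\bigr)\,\td\hat m(y)=h_{\hat m}(f_c),
\]
while $h_m(f)\ge h_{\hat m}(f_c)$ is automatic for a factor. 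This is exactly what the paper does. So the step you were unsure about is not actually needed in the form you proposed; the hypothesis $\lambda^c\le 0$ enters only through the Main Theorem (to get $h^u_m=h_m$), not in the entropy comparison between $f$ and $f_c$. If you keep the Abramov--Rokhlin formulation, the correct justification for $h_m(f\mid\pi_c^{-1}\mathcal B_{M_c})=0$ is again the zero topological entropy of circle fibers (via the relative variational principle), not any Ruelle-type bound.
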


\begin{proof}
	We sketch the proof using the following diagram.

	\begin{center}
		\begin{tikzcd}
			{(M, h_m(f))} \arrow[rrr, "{4):\ \lambda^c\leq 0,\, h^u_m(f)=h_m(f)}"] \arrow[dd, "1):\ h_m(f)=h_{\hat m}(f_c)"] &  &  & {(M,h^u_m(f))} \arrow[dd, "3): \ h^u_m(f)\leq h^u_{\hat m}(f_c)"] \\
			&  &  &                                                                                                          \\
			{(M/\cF^c,h_{\hat m}(f_c))} \arrow[rrr, "2):\ h_{\hat m}(f_c)=h^u_{\hat m}(f_c)"]                                    &  &  & {(M/\cF^c,h_{\hat m}^u(f_c))}                                                                                 
		\end{tikzcd}
	\end{center}
	
	1) The Ledrappier-Walters variational principle (\cite{LedrappierWalters}) gives
	$$\sup_{(\pi_c)_*m=\hat m} h_m(f)=h_{\hat m}(f_c)+\int_{M/\thF^c}h(f,\pi_c^{-1}(y))\,\td\hat m(y).$$
	Since $M$ is a circle bundle, we have $h(f,\pi^{-1}_c(y))=0$ and hence  $h_m(f)=h_{\hat m}(f_c)$. 
	
	2) Since $f_c$ conjugates to an Anosov automorphism on torus, whose unstable entropy is equal to the metric entropy, we have $h_{\hat m}(f_c)=h^u_{\hat m}(f_c)$.
	
	3) \cite[Theorem A]{TY} shows that for $f\in \mathrm{SPH}(M)$, $h^u_m(f)\leq h^u_{\hat m}(f_c)$ with equality if and only if $m$ is $u$-invariant. This result was proven using Jensen's inequality and does not rely on regularity higher than $C^1$. Note that the inequality direction is opposite to that in Step 1.
	
	4) For $f\in\mathrm{SPH}(M)$, if the center exponent $\lambda^c\leq 0$, then the main theorem gives $h^u_m(f)=h_m(f)$. Combining with 1), 2) and 3) we see that the equality in 3) must hold and therefore $m$ is $u$-invariant due to \cite[Theorem A]{TY}.  
\end{proof}

Crovisier and Poletti \cite{crovisier2023invarianceprinciplenoncompactcenter} recently proved an invariance principle for some partially hyperbolic diffeomorphisms with non-compact center leaves and obtained  numerous applications. Combining their Main Theorem (which is stated for $C^1$ diffeomorphisms) and our Main Theorem, one can directly obtain a $C^1$ version of \cite[Corollary D]{crovisier2023invarianceprinciplenoncompactcenter}.
\begin{theorem}
	Let $f$ be a partially hyperbolic diffeomorphism with one-dimensional quasi-isometric center (see \cite{crovisier2023invarianceprinciplenoncompactcenter} for precise definition) and let $m$ be an ergodic measure. If the center Lyapunov exponent of $m$ is non-positive, then the center disintegration $\{m^c_x\}$ is $u$-invariant.
\end{theorem}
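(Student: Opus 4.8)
The plan is to deduce this statement from our Main Theorem in exactly the same way that \Cref{thm invprin} was obtained, with the $C^1$ invariance principle of Crovisier and Poletti \cite{crovisier2023invarianceprinciplenoncompactcenter} playing the role that \cite[Theorem A]{TY} played there. The single nontrivial input is the equality $h^u_m(f)=h_m(f)$ for measures with nonpositive center exponent; everything else is formal.

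\emph{Step 1: \Cref{main thm} applies.} Since $f$ is partially hyperbolic, $TM=E^u\oplus E^c\oplus E^s$ is a globally defined continuous dominated splitting with $E^s$ uniformly contracted, $E^u$ uniformly expanded and $\dim E^c=1$. Consequently every Lyapunov exponent of $m$ along $E^s$ is negative, every one along $E^u$ is positive, and the unique exponent along $E^c$ is $\lambda^c\le 0$. Hence the Oseledets splitting of $m$ into positive, zero and negative exponent bundles is obtained from the partially hyperbolic splitting by possibly merging $E^c$ into $E^s$: if $\lambda^c=0$ it equals $E^u\oplus E^c\oplus E^s$, while if $\lambda^c<0$ it is $E^u\oplus\{0\}\oplus(E^c\oplus E^s)$. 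Being a coarsening of a dominated splitting, it is again dominated on $\supp(m)$, and its center (zero-exponent) bundle has dimension at most one, so \Cref{main thm} — together with \cite[Proposition A.4]{UVYY2024} in the purely hyperbolic case $\lambda^c<0$ — yields $h^u_m(f)=h_m(f)$.

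\emph{Step 2: feed this into the invariance principle.} The Main Theorem of \cite{crovisier2023invarianceprinciplenoncompactcenter}, which is stated at $C^1$ regularity for partially hyperbolic diffeomorphisms with one-dimensional quasi-isometric center, characterizes $u$-invariance of the center disintegration $\{m^c_x\}$ through an entropy identity — the non-compact-center analogue of \cite[Corollary 2.1]{TY} — namely that $h^u_m(f)$ never exceeds the entropy dictated by the transverse (factor) dynamics, with $\{m^c_x\}$ being $u$-invariant precisely when equality holds; this dichotomy rests on a Jensen-type argument that requires no regularity beyond $C^1$. As in the proof of \Cref{thm invprin}, the only ingredient that was classically available solely in the $C^{1+\mathrm{H\"older}}$ (or $C^2$) setting, via Ledrappier--Young theory, is exactly the assertion that a nonpositive center exponent contributes no unstable entropy, i.e. $h^u_m(f)=h_m(f)$. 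Step 1 supplies this in the $C^1$ category, so the entropy bound is saturated and the characterization forces $\{m^c_x\}$ to be $u$-invariant. This is the promised $C^1$ version of \cite[Corollary D]{crovisier2023invarianceprinciplenoncompactcenter}.

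\emph{The main obstacle} is not in this deduction, which is purely formal, but in the Main Theorem itself. Within the present argument the only points needing care are bookkeeping ones: confirming that the hypotheses of \Cref{main thm} really do hold when $\lambda^c<0$ (where the Oseledets center bundle degenerates and one lands in the hyperbolic case), and checking that every step of \cite{crovisier2023invarianceprinciplenoncompactcenter} that we invoke — the entropy characterization and the accompanying inequality — is genuinely $C^1$ and does not implicitly use Lipschitz unstable holonomies inside center--unstable leaves; by the results quoted above, the sole $C^{1+}$-type input is precisely the Ledrappier--Young equality that we have now replaced.
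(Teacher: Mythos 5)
Your proposal is correct and follows essentially the same route as the paper: the paper itself states that this theorem is obtained "directly" by combining the Main Theorem of Crovisier--Poletti (stated at $C^1$ regularity) with the present paper's Main Theorem, which is exactly your Step 1 plus Step 2. Your Step 1, verifying that the Oseledets splitting of $m$ is a coarsening of the partially hyperbolic splitting (and hence dominated, with zero-exponent bundle of dimension $\le 1$ whether $\lambda^c=0$ or $\lambda^c<0$), is the only point requiring any care, and you handle it correctly.
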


\subsection{Applications in $\mathrm{SPH}(M)$}
In this section, we present several results on measures of maximal entropy (and on measures with sufficiently large entropy) 
for diffeomorphisms in $\mathrm{SPH}(M)$. Such results have been obtained in \cite{HHTU}, \cite{TY}, \cite{UVY}, \cite{UVYY2024} and \cite{UVYY2025} for $C^2$ systems.

We first give the definitions of rotation type and hyperbolic type:
\begin{definition}[See \cite{TY}]
	$f\in\mathrm{SPH}(M)$ is of rotation type if, after continuously changing the metrics on the center fibers, the restriction of $f$ to each center fiber is a circle rotation. If $f$ is not of rotation type, $f$ is said to be of hyperbolic type.
\end{definition}

The following proposition, first proven in \cite[Theorem 1]{HHTU} for $C^2$ diffeomorphisms (see also \cite{UVY} and \cite{UVYY2024}), classifies the measures of maximal entropy for $\mathrm{SPH}(M)$.
See \cite[Section 1]{UVYY2024} for the definition of $c$-mostly contracting center, which is defined for $C^1$ diffeomorphisms.
\begin{proposition}\label{proposition 6}
	Let $f\in\mathrm{SPH}(M)$. If $f$ is of rotation type, then $f$ admits a unique measure of maximal entropy with vanishing center exponent. Moreover, $f$ has no hyperbolic periodic orbit.

	If $f$ is of hyperbolic type, then $f$ has only finitely many ergodic measures of maximal entropy, all of which are hyperbolic; some of them have positive center exponent, and the others have negative center exponent. Furthermore, $f$ has $c$-mostly contracting center and admits hyperbolic periodic orbits.
\end{proposition}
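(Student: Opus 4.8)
The plan is to transcribe the $C^2$ proofs of \cite{HHTU}, \cite{UVY} and \cite{UVYY2024}: their only inputs requiring regularity above $C^1$ are the Ledrappier--Young identity $h^u_m(f)=h_m(f)$ and the Avila--Viana invariance principle, and these are now available at $C^1$ through \Cref{main thm} and \Cref{thm invprin}. The first step is the reduction to the quotient. Since $\cF^c$ is a circle bundle we have $h(f,\pi_c^{-1}(y))=0$ (as in the proof of \Cref{thm invprin}), so the Ledrappier--Walters principle \cite{LedrappierWalters} gives $h_m(f)=h_{(\pi_c)_* m}(f_c)$ for every $f$-invariant $m$. Hence $m$ is a measure of maximal entropy for $f$ if and only if $(\pi_c)_* m=\nu$, and since $\nu$ is ergodic every ergodic component of such an $m$ again projects to $\nu$. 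It therefore suffices to understand the ergodic lifts of $\nu$, which I organize by the sign of their center exponent $\lambda^c$.

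\emph{Rotation type.} After the continuous change of metric on the center leaves, $f$ acts as an isometry on each leaf, so $\|Df|_{E^c}\|\equiv 1$ in this metric; as the new metric is uniformly equivalent to the Riemannian one, $\lambda^c(\mu)=0$ for every invariant $\mu$. In particular the center multiplier of every periodic orbit has modulus $1$, so $f$ has no hyperbolic periodic orbit. Given an ergodic lift $m$ of $\nu$, \Cref{thm invprin} applied to $f$ (using $\lambda^c(m)=0$) makes $m$ $u$-invariant, and applied to $f^{-1}$ --- which also lies in $\mathrm{SPH}(M)$, with $E^u$ and $E^s$ interchanged and center exponent $-\lambda^c(m)=0$ --- makes $m$ $s$-invariant. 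A probability measure whose center disintegration is invariant under both the stable and the unstable holonomies and which projects to the ergodic measure $\nu$ is unique; this is the standard $su$-state argument under accessibility, cf. \cite{AV} and \cite{HHTU}, and is a genuinely $C^1$ statement. Thus $f$ has a unique measure of maximal entropy, with $\lambda^c=0$.

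\emph{Hyperbolic type.} Here $f$ is not of rotation type, and then by \cite{HHTU} (see also \cite{UVYY2024}) $f$ --- and, by the $f\mapsto f^{-1}$ symmetry of the definition of $\mathrm{SPH}(M)$, also $f^{-1}$ --- has $c$-mostly contracting center; this uses no regularity beyond $C^1$, the only possible higher-regularity input being the invariance principle, which is covered by \Cref{thm invprin} (indeed, if $f$ did not have $c$-mostly contracting center it would carry a $u$-invariant measure with vanishing center exponent, and the $su$-state argument above together with the topological structure of $f_c$ would force rotation type). For a $C^1$ diffeomorphism with $c$-mostly contracting center, \cite{UVYY2024} shows --- with its $C^{1+}$ inputs replaced by \Cref{main thm} and \Cref{thm invprin} --- that there are only finitely many ergodic measures of maximal entropy, each with strictly negative center exponent. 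Applying this to $f$ and to $f^{-1}$ yields finitely many ergodic measures of maximal entropy for $f$, those arising from $f$ having $\lambda^c<0$ and those from $f^{-1}$ having $\lambda^c>0$; each is hyperbolic since $\lambda^c\ne 0$ while $E^u,E^s$ are uniformly hyperbolic. No ergodic measure of maximal entropy has $\lambda^c=0$: as in the rotation case such a measure would be $su$-invariant, forcing rotation type, a contradiction. Finally, $c$-mostly contracting center produces a hyperbolic periodic orbit by the standard $C^1$ arguments of \cite{ABC} and \cite{UVYY2024}, which gives the last assertion of the proposition.

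I expect the principal difficulty to be bookkeeping rather than a new idea: one must verify that in \cite{HHTU}, \cite{UVY} and \cite{UVYY2024} every use of regularity above $C^1$ passes through the entropy formula or the invariance principle, and in particular that the invariance principle is only ever invoked for measures with non-positive center exponent (for $f$) or non-negative center exponent (for $f^{-1}$), so that the one-sided statement \Cref{thm invprin} suffices --- this is not automatic, since the $C^2$ theory of \cite{AV} is two-sided. One must likewise check that the $c$-mostly-contracting finiteness theory of \cite{UVYY2024}, the implication ``hyperbolic type $\Rightarrow$ $c$-mostly contracting center'' for accessible systems, and the $su$-state uniqueness under accessibility all hold in the $C^1$ category once our substitutions are made. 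Granting these verifications, no further dynamical mechanism is required.
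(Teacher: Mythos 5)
Your proposal is correct in outline and relies on exactly the same new inputs as the paper (\Cref{main thm} and \Cref{thm invprin}), but it organizes the dichotomy differently and leaves implicit one step that the paper treats with care. The paper does not argue directly under the hypothesis ``$f$ is of rotation type''; instead it conditions on whether $f$ admits an ergodic non-hyperbolic measure of maximal entropy. When such a measure $m$ exists, the paper shows $(\pi_c)_*m=\nu$, applies \Cref{thm invprin} to $f$ and $f^{-1}$, proves that the $su$-invariant conditional family $\{m^c_x\}$ is non-atomic and fully supported via accessibility, and then \emph{constructs} the rotation metric $d^c(y,z)=\min\{m^c_x([y,z]),m^c_x([z,y])\}$ from these conditional measures, thereby concluding rotation type. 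Your route instead takes rotation type as the hypothesis, reads off $\lambda^c\equiv 0$, and then needs a separate uniqueness statement. Both directions of the dichotomy come out the same, and the hyperbolic case is delegated to \cite{HHTU,UVYY2024} in both treatments.

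The one place you should be more careful is the sentence ``A probability measure whose center disintegration is invariant under both the stable and the unstable holonomies and which projects to the ergodic measure $\nu$ is unique; this is the standard $su$-state argument under accessibility, cf.\ \cite{AV} and \cite{HHTU}.'' In \cite{HHTU} the uniqueness of the $su$-state rests on absolute continuity of the unstable/stable holonomies along center, which is precisely the $C^{1+\text{H\"older}}$ input this paper is trying to avoid. The paper instead proves this uniqueness from scratch: after showing $m^c_x,\omega^c_x$ are fully supported and continuous, it uses mutual singularity of distinct ergodic lifts of $\nu$ together with accessibility and the one-dimensional covering lemma (\Cref{OCL}, invoked through \Cref{1 dim convergence lemma}) to produce a contradiction. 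That covering argument is where the one-dimensionality of $E^c$ does the work, and it is the ``genuinely $C^1$'' replacement for the absolute-continuity mechanism you gesture at. Your parenthetical justification of ``hyperbolic type $\Rightarrow$ $c$-mostly contracting center'' has the same soft spot: a Gibbs $u$-state with vanishing center exponent becomes an $su$-state only after applying \Cref{thm invprin} to $f^{-1}$ as well, and promoting it to ``forcing rotation type'' again requires the metric construction plus the covering-lemma uniqueness rather than the generic $su$-state theory. With those clarifications supplied, the proof goes through, and you have correctly identified that every higher-regularity input in \cite{HHTU,UVY,UVYY2024} factors through the entropy formula or the invariance principle.
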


We postpone the proof of Proposition \ref{proposition 6} to the next subsection.

\begin{remark}
	By \Cref{proposition 6} we have that $f\in\mathrm{SPH}(M)$ is of rotation type if and only if it has no hyperbolic periodic orbit, if and only if it has only one measure of maximal entropy, if and only if it admits a single non-hyperbolic ergodic measure of maximal entropy. 
\end{remark}

\begin{remark}
	When the system is of rotation type and $C^2$, it was shown in \cite{HHTU} that the condition measures $\{m^c_x\}$ of the non-hyperbolic ergodic measure of maximal entropy $m$ are equivalent to the Lebesgue measures on the center circle fibers. This is the key step in the proof and heavily relies on the $u/s$ holonomy maps inside $cu/cs$ leaves being Lipschitz, which is not true in general for $C^1$ diffeomorphisms. We do not know whether the conditional measures $m^c_x$ are equivalent to the Lebesgue measure on the center fibers when the system is only $C^1$. 
\end{remark}

We then introduce the following result concerning the rigidity of high entropy measures.

\begin{proposition}\label{proposition 7}
	Suppose $f\in\mathrm{SPH}(M)$ is of hyperbolic type. Then there exist $\vep>0$ and
	$\lambda_0>0$ such that for every ergodic invariant probability measure of $f$ whose entropy exceeds $h_{top}(f)-\vep$, its center exponent satisfies $|\lambda^c(m)|>\lambda_0$.
\end{proposition}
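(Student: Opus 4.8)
This is the $C^1$ analogue of the $C^2$ argument of \cite{UVYY2025,UVYY2024}: one runs the same contradiction-with-a-weak-$*$-limit scheme, using the Main Theorem (\Cref{main thm}) and the $C^1$ invariance principle (\Cref{thm invprin}) where the $C^2$ version used Ledrappier--Young. Set
\[
2\lambda_0:=\min\bigl\{|\lambda^c(\mu)|:\mu\text{ an ergodic measure of maximal entropy of }f\bigr\}.
\]
By \Cref{proposition 6} there are only finitely many such $\mu$ and all are hyperbolic, so $\lambda_0>0$. If no $\vep$ worked for this $\lambda_0$, there would be ergodic measures $m_n$ with $h_{m_n}(f)\to h_{top}(f)$ and $|\lambda^c(m_n)|\le\lambda_0$. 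Passing to a subsequence, $m_n\to m_\infty$ weakly-$*$; after possibly replacing $f$ by $f^{-1}$ (which preserves $\mathrm{SPH}(M)$, the hyperbolic type, $h_{top}$ and the value $\lambda_0$, while negating $\lambda^c$) we may also assume $\lambda^c(m_n)\le 0$ for every $n$.

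Two soft inputs. First, $m\mapsto h_m(f)$ is upper semicontinuous: by step~1) of the proof of \Cref{thm invprin}, $h_m(f)=h_{(\pi_c)_*m}(f_c)$ for every invariant $m$ (the center fibers being circles), $m\mapsto(\pi_c)_*m$ is weakly-$*$ continuous, and $f_c$ is expansive. Hence $h_{m_\infty}(f)\ge\limsup_n h_{m_n}(f)=h_{top}(f)$, so $m_\infty$ is a measure of maximal entropy; by affinity of the entropy its ergodic components are a.e.\ ergodic maximal measures, so by \Cref{proposition 6} we may write $m_\infty=\sum_i p_i\mu_i$ with $p_i>0$ and the $\mu_i$ distinct ergodic maximal measures. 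Second, $x\mapsto\log\|Df|_{E^c(x)}\|$ is continuous ($E^c$ being a continuous line bundle), hence $\sum_i p_i\lambda^c(\mu_i)=\int\log\|Df|_{E^c}\|\,\td m_\infty=\lim_n\lambda^c(m_n)\in[-\lambda_0,0]$. Since $|\lambda^c(\mu_i)|\ge 2\lambda_0$ for every $i$, the $\mu_i$ cannot all have center exponent of the same sign; in particular some $\mu_{i_0}$ with $p_{i_0}>0$ has $\lambda^c(\mu_{i_0})>0$.

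The invariance principle now produces the contradiction. Since $\lambda^c(m_n)\le 0$, the measure $m_n$ satisfies the hypotheses of the Main Theorem (the Oseledets zero bundle is $E^c$, of dimension $1$, when $\lambda^c(m_n)=0$; it is trivial and $E^u\oplus E^{cs}$ is dominated when $\lambda^c(m_n)<0$, i.e.\ the $\dim E^c=0$ case). Thus $h^u_{m_n}(f)=h_{m_n}(f)$, where $h^u$ denotes entropy along the strong unstable foliation $\cF^u$. As in steps 1)--3) of the proof of \Cref{thm invprin}, for every invariant measure one has $h^u_{m}(f)\le h^u_{(\pi_c)_*m}(f_c)=h_{(\pi_c)_*m}(f_c)=h_m(f)$, so the equality $h^u_{m_n}(f)=h_{m_n}(f)$ forces equality in \cite[Theorem A]{TY}, i.e.\ $m_n$ is $u$-invariant. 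The set of $u$-invariant measures is weakly-$*$ closed (the $cu$-holonomies being continuous), so $m_\infty$ is $u$-invariant, and therefore so is each ergodic component $\mu_i$ (the $\mu_i$ are mutually singular and the $u$-holonomies form a continuous family containing the identity). But $\mu_{i_0}$ is ergodic with $\lambda^c(\mu_{i_0})>0$: applying the Main Theorem to $f^{-1}$ gives $h^u_{\mu_{i_0}}(f^{-1})=h_{\mu_{i_0}}(f^{-1})$, which via the chain of \Cref{thm invprin} applied to $f^{-1}\in\mathrm{SPH}(M)$ makes $\mu_{i_0}$ also $s$-invariant, hence $su$-invariant; accessibility of $f$ then forces $\lambda^c(\mu_{i_0})=0$ (invariance principle, \cite{AV}), a contradiction. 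Unwinding this gives the asserted $\vep>0$ for the above $\lambda_0$.

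I expect the last step to be the main obstacle, on two counts: the weak-$*$ closedness of the class of $u$-invariant measures (and the fact that the ergodic components of a $u$-invariant measure are $u$-invariant), and the rigidity statement that an $su$-invariant measure over an accessible circle-bundle system has vanishing center exponent. Both are available in the literature (\cite{AV}; see also \cite{UVY}, \cite{TY}) and require nothing beyond $C^1$ regularity and continuity of the holonomies, but they are the delicate points; everything else -- upper semicontinuity of entropy, continuity of $\lambda^c$, and the finiteness and hyperbolicity of the ergodic maximal measures -- is routine or already supplied by \Cref{proposition 6}.
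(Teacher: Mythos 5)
Your argument is correct and amounts to a reconstruction of the argument of \cite[Theorem~B]{TY}, which is exactly what the paper appeals to: the paper's own ``proof'' of \Cref{proposition 7} is a citation to \cite[Theorem~B]{TY} together with the observation that the only two $C^2$-dependent inputs there (the invariance principle and the rotation/hyperbolic dichotomy) are now supplied in the $C^1$ setting by \Cref{thm invprin} and \Cref{proposition 6}. So you take the same route, simply spelled out rather than cited. One small precision worth recording: in your final step, ``accessibility of $f$ then forces $\lambda^c(\mu_{i_0})=0$'' is not literally the statement of the invariance principle; rather, $su$-invariance of $\mu_{i_0}$ together with accessibility, as in the proof of \Cref{proposition 6}, yields via the Hopf argument a continuous, non-atomic, fully supported family of fiber measures, hence a continuous change of metric making $f$ an isometry on each center fiber --- so $f$ would be of rotation type, directly contradicting the standing hypothesis (with $\lambda^c(\mu_{i_0})=0$ a byproduct). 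The delicate points you flag --- weak-$*$ closedness of the $u$-invariant measures, and that ergodic components of a $u$-invariant measure are $u$-invariant --- are indeed precisely what \cite{TY} relies on, and they are $C^0$-level facts about holonomies, so citing \cite{AV}, \cite{TY} for them is appropriate here.
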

This result was proven in \cite[Theorem B]{TY} assuming $C^2$ regularity. However, the $C^2$ assumption is only used to ensure the invariance principle and the classification of rotation type in \cite[Theorem 1]{HHTU}. We have proven in \Cref{main thm} and \Cref{proposition 6} that they both hold for $C^1$ diffeomorphisms under this setting, and thus \Cref{proposition 7} remains true.

The next two results follow directly from \Cref{proposition 6} and the results in \cite[Theorem C]{UVYY2024} and \cite[Theorem C]{UVYY2025}, all of which only require $C^1$ smoothness. See \cite{UVYY2024} for the precise definition of skeletons.

\begin{proposition}\label{prop8}
	Suppose $f$ is of hyperbolic type, and denote $m_1,\cdots, m_k$ to be all the ergodic measures of maximal entropy with negative center exponent. Then the supports of $m_i$ are $u$-saturated, pairwise disjoint, and each support has finitely many connected components. Furthermore, the unstable foliation inside each connected component is minimal. Moreover, these measures can be topologically classified by skeletons.
\end{proposition}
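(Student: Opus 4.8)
The plan is to obtain \Cref{prop8} as an essentially black-box consequence of the $C^1$ structural theory for partially hyperbolic diffeomorphisms with $c$-mostly contracting center, using \Cref{proposition 6} to supply the one nontrivial hypothesis. First I would record what \Cref{proposition 6} gives in the hyperbolic-type case: $f$ has $c$-mostly contracting center (a condition meaningful at $C^1$ regularity, cf.\ the definition in \cite[Section 1]{UVYY2024}) and $f$ carries hyperbolic periodic orbits of stable index $\dim E^s+1$. I would also recall that in $\mathrm{SPH}(M)$ the center circle fibers contribute no entropy, so $h_{top}(f)=h_{top}(f_c)$ and, by the Main Theorem, every ergodic measure of maximal entropy $m$ satisfies $h^u_m(f)=h_m(f)=h_{top}(f)$; hence ``ergodic measure of maximal entropy'' and ``ergodic measure of maximal $u$-entropy'' coincide, and $m_1,\dots,m_k$ are exactly the ergodic ones among these with negative center exponent.

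With mostly contracting center in hand, the first group of assertions is exactly \cite[Theorem C]{UVYY2024} (see also \cite[Theorem C]{UVYY2025}), which is stated for $C^1$ diffeomorphisms: for a $C^1$ partially hyperbolic, accessible diffeomorphism with one-dimensional center and $c$-mostly contracting center, the ergodic measures of maximal entropy with negative center exponent are finite in number (this finiteness is also part of \Cref{proposition 6}), their supports are $u$-saturated, pairwise disjoint, each with finitely many connected components which $f$ permutes cyclically, and the unstable foliation $\cF^u$ restricted to each connected component is minimal. Applying this to $f\in\mathrm{SPH}(M)$ of hyperbolic type — whose hypotheses hold since $f\in\mathrm{SPH}(M)$ and, by the previous paragraph, $f$ has $c$-mostly contracting center — yields these statements verbatim for $m_1,\dots,m_k$.

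For the final assertion I would invoke the skeleton machinery of \cite{UVYY2024} (and \cite{UVYY2025}), again developed at $C^1$ regularity under mostly contracting center. One produces a \emph{skeleton}: a finite set $\{p_1,\dots,p_k\}$ of hyperbolic periodic points of stable index $\dim E^s+1$, pairwise unrelated in the sense that no $\overline{W^u(p_i)}$ contains another, such that every unstable leaf of $f$ intersects $W^s(p_i)$ for some $i$; the assignment sending $p_i$ to the unique ergodic measure of maximal entropy with negative center exponent supported on $\overline{W^u(p_i)}$ is then a bijection onto $\{m_1,\dots,m_k\}$, with $\supp(m_i)=\overline{W^u(p_i)}$. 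The combinatorics of the skeleton (the number of periodic points, their periods, the heteroclinic relations among their orbits, the permutation of the connected components of $\supp(m_i)$) determine the topological dynamics of each $f|_{\supp(m_i)}$ up to the relevant equivalence, which is the sense in which the $m_i$ are ``topologically classified by skeletons'' in \cite[Theorem C]{UVYY2024}.

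The upshot is that \Cref{prop8} contains no genuinely new argument beyond \Cref{proposition 6}: all the structural and classification content is imported from \cite{UVYY2024, UVYY2025}, whose hypotheses are exactly ``$C^1$ $+$ $\mathrm{SPH}(M)$ $+$ $c$-mostly contracting center.'' The only point requiring care — and where the work of this paper, via the Main Theorem and the $C^1$ invariance principle, actually enters — is the verification in \Cref{proposition 6} that $f$ of hyperbolic type has $c$-mostly contracting center and that its ergodic measures of maximal entropy are precisely the objects classified in \cite{UVYY2024, UVYY2025}; once that is granted, the deduction above is purely formal, and this is why I expect the ``obstacle'' here to be bookkeeping (matching hypotheses and terminology) rather than a substantive estimate.
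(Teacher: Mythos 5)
Your proposal is correct and takes essentially the same route as the paper: the paper also derives \Cref{prop8} directly from \Cref{proposition 6} (which supplies $c$-mostly contracting center and the existence of hyperbolic periodic orbits) together with \cite[Theorem C]{UVYY2024} and \cite[Theorem C]{UVYY2025}, noting that those results already hold at $C^1$ regularity. Your version merely spells out the bookkeeping the paper leaves implicit.
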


\begin{proposition}
	Suppose $f\in \mathrm{SPH}(M)$ is of hyperbolic type, and $m$ is one of its ergodic measures of maximal entropy with negative center exponent. Suppose $\supp(m)=\bigcup_{i=1}^\ell \Lambda_i$ has $\ell$ connected components. Denote by $m_1$ the ergodic decomposition of $m$ with respect to $f^\ell$ supported on $\Lambda_1$. Then $(f^\ell,m_1)$  has exponential decay of correlations for Hölder continuous functions.
\end{proposition}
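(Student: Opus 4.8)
The plan is to deduce the statement from the $C^1$ result on exponential mixing of measures of maximal entropy for partially hyperbolic diffeomorphisms with mostly contracting center, namely \cite[Theorem C]{UVYY2025}, after checking that its hypotheses hold here. First I would note that, since $m$ is $f$-ergodic and $\supp(m)$ has finitely many connected components, $f$ must permute $\Lambda_1,\dots,\Lambda_\ell$ as a single $\ell$-cycle; hence $f^\ell(\Lambda_1)=\Lambda_1$ and $m=\frac{1}{\ell}\sum_{i=0}^{\ell-1}f^i_*m_1$ with $m_1$ the $f^\ell$-ergodic component supported on $\Lambda_1$. The power rule and invariance of entropy under the conjugacies $f^i$ give $h_{m_1}(f^\ell)=h_m(f^\ell)=\ell\,h_m(f)=\ell\,h_{top}(f)=h_{top}(f^\ell)$, so $m_1$ is an ergodic measure of maximal entropy for $f^\ell$. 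Its center exponent is $\ell\lambda^c(m)<0$ while $E^s$ stays uniformly contracted and $E^u$ uniformly expanded, so $m_1$ is hyperbolic with (nonuniformly contracting) stable bundle $E^{cs}$ and unstable bundle $E^u$. By \Cref{prop8} the unstable foliation $\cF^u$ is minimal inside $\Lambda_1$, which gives the topological mixing of $f^\ell|_{\Lambda_1}$ needed to rule out periodicity obstructions, and by \Cref{proposition 6} the ambient system has $c$-mostly contracting center.

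With these hypotheses in hand, \cite[Theorem C]{UVYY2025} applies to $(f^\ell,\Lambda_1,m_1)$ and yields exponential decay of correlations for Hölder observables. I would recall the mechanism only to the following extent: one builds a Young tower over a hyperbolic product set $R\subseteq\Lambda_1$ formed from a local product of unstable plaques with center--stable plaques, equipped with a hyperbolic return map whose return times have exponential tails with respect to the reference measure obtained by disintegrating $m_1$ along unstable plaques. The exponential tail estimate is powered by the mostly contracting property, which forces a positive-density set of iterates to contract uniformly along the center and so, together with the uniform contraction in $E^s$ and expansion in $E^u$, produces the quantitative hyperbolicity needed; minimality of $\cF^u$ on $\Lambda_1$ makes the return map (topologically) mixing, so Young's theorem gives the exponential rate, and one checks that the natural invariant measure of the tower projects to $m_1$ since both are the measure of maximal entropy on $\Lambda_1$.

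The step I expect to be the real obstacle is the construction of the tower and, above all, the exponential tail estimate in the $C^1$ category, where bounded distortion for unstable Jacobians is unavailable and the classical volume-based bookkeeping breaks down. This is exactly where the $C^1$ technology enters: one works with the conditional measures of the measure of maximal entropy and with entropy (rather than volume) estimates, using that these conditionals are Gibbs-like for the geometric potential along $W^u$---a manifestation of the $C^1$ Ledrappier--Young identity $h^u_m(f)=h_m(f)$ of \Cref{main thm}---and that the unstable holonomies inside center--unstable sets are uniformly Hölder with exponent arbitrarily close to $1$ (\Cref{uniform Hölder}), which suffices in place of Lipschitz regularity to control distortion along the laminations. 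The remaining points---transitivity of the cyclic action on the $\Lambda_i$, the entropy identity $h_{m_1}(f^\ell)=h_{top}(f^\ell)$, and the identification of $m_1$ with the tower measure---are routine, and the full details of \cite[Theorem C]{UVYY2025} can be invoked directly.
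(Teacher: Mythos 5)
Your proposal is correct and follows essentially the same route as the paper, which simply derives the statement from \Cref{proposition 6} (giving $c$-mostly contracting center) together with a direct citation of \cite[Theorem C]{UVYY2025}, noting that the latter already only requires $C^1$ smoothness. The extra material you supply---the cyclic permutation of the $\Lambda_i$, the entropy identity $h_{m_1}(f^\ell)=h_{top}(f^\ell)$, minimality of $\cF^u$ from \Cref{prop8}, and the sketch of the Young-tower mechanism inside \cite{UVYY2025}---is a reasonable elaboration of hypotheses the paper takes as immediate, but none of it constitutes a genuinely different argument; in particular, your concern that the tower construction might fail in the $C^1$ category is already resolved by the paper's (and \cite{UVYY2025}'s) assertion that the result holds at $C^1$ regularity, so no new work along those lines is required here.
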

Using \Cref{prop8} and the proof in \cite{UVY}, we can immediately obtain a $C^1$ version of \cite[Theorem D]{UVY}, characterizing the variation of the numbers of measures of maximal entropy under $C^1$ perturbations of $f$. Write $\tP(M)$ for the space of probability measures on $M$ endowed with the $\text{weak}^*$ topology.
\begin{proposition}
	If $f\in\mathrm{SPH}(M)$ is of hyperbolic type, then there is a $C^1$-neighborhood $\mathcal U$ of $f$ such that the number of ergodic measures of maximal entropy for any $g\in\mathcal U$ with negative (resp. positive) center exponent is smaller than or equal to the number of measures of maximal entropy for $f$ with negative (resp. positive) center exponent.

	If $f$ is of rotation type, then there is a $C^1$-neighborhood $\mathcal U$ of $f$ such that any $g\in\mathcal U$ has at most two ergodic measures of maximal entropy. Moreover, there exists two continuous functions $\Gamma^+$ and $\Gamma^-:\mathcal U\rightarrow \tP(M)$ such that $\Gamma^-(g)$ (resp. $\Gamma^+(g)$) is an ergodic measures of maximal entropy of $g$ with non-positive (resp. non-negative) center exponent.
\end{proposition}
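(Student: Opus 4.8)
The plan is to transcribe the proof of \cite[Theorem D]{UVY} essentially verbatim: it was written for $C^2$ systems only because it invokes the invariance principle, the rotation/hyperbolic dichotomy of \cite[Theorem 1]{HHTU}, and the skeleton classification of measures of maximal entropy, and by \Cref{main thm}, \Cref{proposition 6} and \Cref{prop8} all three of these ingredients are now available for $C^1$ diffeomorphisms in $\mathrm{SPH}(M)$. I would begin by recording the needed robustness: $\mathrm{SPH}(M)$ is $C^1$-open (partial hyperbolicity, dynamical coherence, accessibility, the circle-bundle structure of $\cF^c$ and the identification of $M_c=M/\cF^c$ with a torus all persist under small $C^1$ perturbations), and for $g$ $C^1$-close to $f$ the induced homeomorphism $g_c$ is conjugate, by \cite{hiraide90}, to the \emph{same} hyperbolic toral automorphism as $f_c$ (its action on first homology being unchanged), so $h_{\mathrm{top}}(g)=h_{\mathrm{top}}(g_c)$ is locally constant. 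Since partially hyperbolic diffeomorphisms with one-dimensional center are entropy-expansive with an expansivity constant uniform on a $C^1$-neighborhood of $f$, the metric entropy is jointly upper semicontinuous in $(g,m)$; hence any weak-$*$ limit of measures of maximal entropy along $g_n\to g$ is a measure of maximal entropy of $g$, and the set of ergodic measures of maximal entropy depends upper semicontinuously on $g$.

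Suppose first that $f$ is of hyperbolic type, with $k^-$ (resp.\ $k^+$) ergodic measures of maximal entropy of negative (resp.\ positive) center exponent; both are finite by \Cref{proposition 6}. The hyperbolic periodic orbit afforded by the hyperbolic type of $f$ persists, so every $g$ $C^1$-close to $f$ is again of hyperbolic type, and \Cref{proposition 6}, \Cref{prop8} apply to $g$. By \Cref{prop8} each measure of maximal entropy of $f$ with negative center exponent is encoded by a skeleton — a finite set of hyperbolic periodic orbits whose defining conditions (hyperbolicity, the $c$-mostly-contracting estimates, and minimality of $\cF^u$ on the relevant connected components) are $C^1$-open — and replacing those orbits by their hyperbolic continuations yields skeletons for $g$. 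By \cite[Theorem C]{UVYY2024}, which already requires only $C^1$ regularity, every ergodic measure of maximal entropy of $g$ with negative center exponent is the measure attached to one of these continued skeletons, with distinct measures attached to distinct skeletons; hence $g$ has at most $k^-$ of them. Running the same argument for $g^{-1}$, which swaps the signs of the center exponents, bounds the number with positive center exponent by $k^+$.

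Suppose now that $f$ is of rotation type. By \Cref{proposition 6} it has a single ergodic measure of maximal entropy $m_f$, which is non-hyperbolic and has $\supp m_f=M$ (its support is $\cF^u$-saturated and $\cF^u$ is minimal). Let $g$ be $C^1$-close to $f$ in $\mathrm{SPH}(M)$. If $g$ is of rotation type it has exactly one ergodic measure of maximal entropy. If $g$ is of hyperbolic type, \Cref{proposition 6} and \Cref{prop8} give finitely many ergodic measures of maximal entropy, all hyperbolic, those with negative center exponent having pairwise disjoint $\cF^u$-saturated supports on which $\cF^u$ is minimal; since the supports of measures of maximal entropy of $g$ must accumulate on $\supp m_f=M$ as $g\to f$, disjointness and $u$-minimality rule out more than one such support once $g$ is close enough, and likewise (applying $g^{-1}$) at most one with positive center exponent. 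So $g$ has at most two ergodic measures of maximal entropy — exactly one of each center-sign when $g$ is of hyperbolic type, and the single, zero-exponent one when $g$ is of rotation type. I would then let $\Gamma^-(g)$ (resp.\ $\Gamma^+(g)$) be the ergodic measure of maximal entropy of $g$ with non-positive (resp.\ non-negative) center exponent, and deduce continuity from the upper semicontinuity of the measure-of-maximal-entropy set together with the robust characterization of these extremal measures — by their proper, $\cF^u$-saturated (resp.\ $\cF^s$-saturated) supports in the hyperbolic-type case, and as the unique measure of maximal entropy in the rotation-type case — exactly as in \cite{UVY}.

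The main obstacle is this last point: excluding the coexistence, for $g$ arbitrarily $C^1$-close to a rotation-type $f$, of two distinct ergodic measures of maximal entropy with the same sign of center exponent, and the accompanying continuity of $\Gamma^\pm$ across the rotation/hyperbolic boundary. Both are handled precisely as in \cite{UVY}, the essential inputs being the full-support and $u$-minimality statements of \Cref{prop8}; once these are in hand at $C^1$ regularity, as they now are, no genuinely new difficulty arises and the rest is a routine continuation argument.
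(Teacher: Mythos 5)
Your proposal takes essentially the same route as the paper, which does not give a proof at all: the paper's entire argument is the single sentence that, given \Cref{prop8}, the proof of \cite[Theorem D]{UVY} transfers verbatim to $C^1$ regularity. Your sketch is a faithful and useful unpacking of what that one-liner entails — the $C^1$-openness of $\mathrm{SPH}(M)$, local constancy of topological entropy, $h$-expansivity giving upper semicontinuity of the MME set, and the skeleton/continuation mechanism from \cite[Theorem C]{UVYY2024} in the hyperbolic-type case — so there is no genuine divergence in strategy.

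One place where the sketch is looser than it should be, and worth flagging if you were to write this out in full: in the hyperbolic-type case you argue that the continuations of $f$'s skeletons are skeletons for $g$ and that every MME of $g$ with negative center exponent is attached to one of these continuations. The first claim is a $C^1$-openness statement you assert without checking, and the second claim requires knowing that $g$ has no skeleton classes beyond the continuations — i.e.\ you need the upper semicontinuity of the skeleton count, which is a separate lemma in \cite{UVY}/\cite{UVYY2024} rather than a consequence of the mere existence of the continued skeletons. Similarly, in the rotation-type case the step ``disjointness and $u$-minimality rule out more than one such support once $g$ is close enough'' is not a direct consequence of the supports accumulating on $M$: two disjoint compact $u$-saturated sets with minimal internal unstable foliation can each be Hausdorff-close to $M$. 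The actual argument in \cite{UVY} uses the skeleton/homoclinic-class structure to exclude this, and you would need to invoke that mechanism explicitly rather than appeal to the accumulation heuristic. Since the paper itself offloads exactly these points to \cite{UVY}, your sketch matches the paper's level of detail, but these are the two spots where ``routine continuation argument'' is doing real work.
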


Finally, we remark that the classification of partially hyperbolic diffeomorphisms on 3-Nilmanifolds other than $\mathbb T^3$ was provided by Hammerlindl and Potrie in \cite{HP14}. In this case, $f$ is always dynamically coherent with center leaves forming a circle bundle. Moreover, $f$ is always accessible, and supports a unique invariant $u$-saturated set which is connected. Thus, following the previous results, the following result in \cite{UVY} also holds for $C^1$ diffeomorphisms.
\begin{proposition}
	Let $f$ be a partially hyperbolic diffeomorphism on $\mathrm{Nil}^3\setminus \mathbb T^3$, and assume that $f$ has a hyperbolic periodic point. Then $f$ admits two ergodic measures of maximal entropy $m^+$ and $m^-$ with center exponent positive and negative respectively. Both measures have exponential decay of correlations for Hölder observables. Furthermore, all ergodic measures with sufficiently large entropy are hyperbolic.
\end{proposition}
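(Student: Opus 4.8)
The plan is to assemble the statement from the structural results already established for $\mathrm{SPH}(M)$, feeding in the two geometric inputs supplied by \cite{HP14}: that $f\in\mathrm{SPH}(M)$, and that $f$ carries a unique, connected invariant $u$-saturated set. First I would record that by \cite{HP14} any partially hyperbolic $f$ on $M=\mathrm{Nil}^3\setminus\mathbb T^3$ is dynamically coherent, its center foliation $\cF^c$ is a circle bundle with base a $2$-torus, and $f$ is accessible; hence $f\in\mathrm{SPH}(M)$ and the induced map $f_c$ on $M_c=M/\cF^c=\mathbb T^2$ is conjugate to a hyperbolic toral automorphism. Since $f$ has a hyperbolic periodic point, the remark following \Cref{proposition 6} shows that $f$ is of hyperbolic type.

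Next I would invoke \Cref{proposition 6}: $f$ has only finitely many ergodic measures of maximal entropy, all hyperbolic, with $c$-mostly contracting center, and at least one has negative and at least one has positive center exponent. To upgrade ``finitely many'' to ``exactly one of each sign'', I would use \Cref{prop8}: the supports of the ergodic MMEs with negative center exponent are $u$-saturated, invariant, pairwise disjoint, each with finitely many connected components, with $\cF^u$ minimal inside each component. Since \cite{HP14} provides a unique connected invariant $u$-saturated set, pairwise disjointness forces exactly one such measure $m^-$, with $\supp(m^-)$ equal to this set; in particular $\supp(m^-)$ is connected, so $\ell=1$ in the decay-of-correlations proposition stated above. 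Applying the same reasoning to $f^{-1}$---which is partially hyperbolic on the same manifold, has the same hyperbolic periodic point, lies in $\mathrm{SPH}(M)$, is of hyperbolic type, and whose ergodic MMEs are exactly those of $f$ with center exponent sign reversed---produces a unique ergodic MME of $f$ with positive center exponent, which we call $m^+$.

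The exponential decay of correlations for Hölder observables for $(f,m^-)$ is then immediate from the decay-of-correlations proposition stated above with $\ell=1$, and the same applied to $f^{-1}$ (decay of correlations being insensitive to time reversal) gives it for $m^+$. Finally, the hyperbolicity of all ergodic measures of sufficiently large entropy is exactly \Cref{proposition 7}: there are $\vep>0$ and $\lambda_0>0$ with $|\lambda^c(m)|>\lambda_0$ whenever $h_m(f)>h_{top}(f)-\vep$, and since the exponents along $E^u$ and $E^s$ are uniformly bounded away from zero by partial hyperbolicity, such an $m$ has all Lyapunov exponents nonzero, hence is hyperbolic.

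The main obstacle is the counting step: one must argue carefully that the uniqueness and connectedness of the invariant $u$-saturated set from \cite{HP14}, combined with the pairwise disjointness and the minimality of $\cF^u$ in each component from \Cref{prop8}, genuinely force a \emph{single} ergodic MME with negative center exponent; and one must verify that $f^{-1}$ inherits every hypothesis---in particular that \cite{HP14} applies to $f^{-1}$ as well, giving a unique connected invariant $s$-saturated set for $f$---so that the symmetric argument legitimately produces $m^+$. Everything else is a direct combination of \Cref{proposition 6}, \Cref{proposition 7}, \Cref{prop8}, and the decay-of-correlations proposition.
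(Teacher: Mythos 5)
Your proposal is correct and follows essentially the same approach as the paper, which simply points to \cite{HP14} for the structural facts (dynamical coherence, circle bundle with torus base, accessibility, and a unique connected invariant $u$-saturated set) and then invokes ``the previous results'' (Propositions \ref{proposition 6}, \ref{proposition 7}, \ref{prop8}, and the decay-of-correlations proposition) to conclude, exactly as the $C^2$ version in \cite{UVY} does. You have correctly filled in what the paper leaves implicit: hyperbolic type follows from the remark after \Cref{proposition 6}; uniqueness of $m^-$ comes from combining \Cref{prop8} with the unique connected $u$-saturated set; uniqueness of $m^+$ follows by applying the same chain to $f^{-1}\in\mathrm{SPH}(M)$, whose $u$-direction is $f$'s $s$-direction; connectedness forces $\ell=1$ in the decay-of-correlations statement; and \Cref{proposition 7} handles the high-entropy rigidity. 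The one place you flag as needing care --- that \cite{HP14} applies equally to $f^{-1}$, yielding a unique connected invariant $s$-saturated set for $f$ --- is indeed the only non-automatic verification, and it does hold since the \cite{HP14} classification covers all partially hyperbolic diffeomorphisms on these nilmanifolds including $f^{-1}$.
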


\subsection{Proof of \Cref{proposition 6}}\label{Section 6}
By \Cref{thm invprin}, the invariance principle holds in our situation, and the proof in the case when $f$ admits no ergodic non-hyperbolic measure of maximal entropy is the same as in \cite{HHTU} and \cite{UVYY2024}. Thus, we only focus on the proof for the alternative case. 

Suppose $f$ admits an ergodic, non-hyperbolic measure $m$ of maximal entropy.  According to the Ledrappier-Walters variational principle in \cite{LedrappierWalters},
$$\sup_{(\pi_c)_*m=\hat m} h_m(f)=h_{\hat m}(f_c)+\int_{M/\thF^c}h(f,\pi_c^{-1}(y))\,\td\hat m(y),$$
so $(\pi_c)_* m = \nu$ is the measure of maximal entropy on the quotient space. By Hiraide \cite{hiraide90}, $f_c$ is conjugate to a linear Anosov torus diffeomorphism. Thus, $\nu$ has local product structure and is fully supported. 

Since the center Lyapunov exponent of $m$ is $0$, the invariance principle (\Cref{thm invprin}) can be applied to both $f$ and $f^{-1}$ to obtain that the conditional measures $m^c_x$ along the center leaves are invariant under both $s$ and $u$ holonomy maps and are invariant under iteration by $f$. Moreover, using a classical Hopf argument as in the proof of \cite[Theorem D]{AV}, the family $\{m^c_x\}$ can be chosen to depend continuously on the base point $x$.

Next, we show that $m^c_x$ has no atoms and $\supp(m^c_x)=\cF^c(x)$. 

The proof follows directly from $s/u$ invariance.
Suppose first that $m^c_x$ contains an atom $y$ with $m^c_x(y)=a>0$. Since $f$ is accessible, for any point $z\in \cF^c(x)$, there is a map $H:\cF^c(x)\to \cF^c(x)$ which is the composition of finitely many $s/u$ holonomy maps, such that $H(y)=z$. Because the measure $m^c_x$ is invariant under $H$, we have that $m^c_x(z)=a>0$ for all $z$, which is a contradiction.
A similar proof shows that if an interval $I\subseteq\cF^c(x)$ satisfies $m^c_x(I)= 0$, then for any $z\in\cF^c(x)$ there is an interval $I_z$ such that $m^c_x(I_z)=0$. This would imply $\supp(m^c_x)=\emptyset$ which is impossible. Hence $\supp(m^c_x)=\cF^c(x)$.

Next we define a continuous change of metrics on the center fibers. Consider the following metric $d^c$ on the center leaf $\cF^c(x)$: for $y,z\in \cF^c(x)$, $d^c(y,z):=\min\{m^c_x([y,z]),m^c_x([z,y])\}$ where $[y,z]$ is the anticlockwise arc from $y$ to $z$. Because $m^c_x$ has no atoms and is fully supported, $d^c$ is a metric on the center leaf. Moreover, because the conditional measures are continuous w.r.t. the base point, the metrics change continuously with respect to the center leaves.
Finally, since the conditional measures are $f$-invariant, $f$ is an isometry on the center leaves under the new metric. This shows that $f$ is of rotation type. In particular, $f$ admits no hyperbolic periodic orbit.

It remains to show that $m$ is the unique measure of maximal entropy for $f$. Suppose on the contrary that $f$ admits another ergodic measure $\omega$. Because $f$ admits no hyperbolic periodic orbit, $\omega$ must be non-hyperbolic. This follows from an argument using Liao's shadowing lemma as in \cite{Gan2002}: if $\omega$ is hyperbolic then there exists a hyperbolic periodic orbit; see \cite[Lemma 4.5]{UVY} for example. By the invariance principle (\Cref{thm invprin}), $\omega$ admits a continuous family of conditional measures along the center leaves $\{\omega^c_x\}$ which are $s/u$-invariant and $f$-invariant.

Because $m$ and $\omega$ are distinct ergodic measures with the same quotient measure $\nu$ on the quotient space of center leaves, their conditional measures are mutually singular on typical center leaves. For such a typical leaf $\cF^c(y)$, there is a point $z\in \cF^c(y)$ and an open interval neighborhood $I_z$ of $z$ such that
$\frac{m^c_y(I_z)}{\omega^c_y(I_z)}>100$. By the accessibility of $f$ and compactness of $\cF(y)$, there are finitely many maps $H_i: \cF^c(y)\to \cF^c(y)$, all of which are finite compositions of $s/u$ holonomy maps, such that the images
$I_i=H_i(I_z)$ cover the entire leaf $\cF^c(y)$. By taking a subcover and applying \Cref{1 dim convergence lemma}, we may assume that each point of the center leaf is covered by at most two $I_i$'s. This means that $\sum_i m^c_y(I_i)\leq 2$.
By the $s/u$-invariance of $m^c_y$ and $\omega^c_y$, we know that $\frac{m^c_y(I_i)}{\omega^c_y(I_i)}=\frac{m^c_y(I_z)}{\omega^c_y(I_z)}>100$, which leads to $\omega^c_y(\cF^c(y))\leq \sum_i \omega^c_y(I_i) \leq \sum_i m^c_y(I_i)/100\leq 2/100<1$. This contradicts the fact that $\omega^c_y(\cF^c(y))=1$. Hence $f$ admits a unique measure of maximal entropy. This finishes the proof of \Cref{proposition 6}. \qed

\section*{Acknowledgements}
The authors are grateful to Fan Yang for many helpful conversations and comments on a first version of the text, and for his help to rewrite certain parts of this paper.

Shaobo Gan is partially supported by National Key R\&D Program of China 2022YFA1005801, and Yao Tong is partially supported by FRPUS. Jiagang Yang was partially supported by CAPES-Finance Code 001, CNPq-Brazil grant 312054/2023-8, CNPq-Projeto Universal No.
404943/2023-3, PRONEX, NSFC 12271538, NSFC 11871487, NSFC 12071202 and
MATH-AmSud 220029.
\bigskip

\noindent {\bf Data Availability}
No numerical or categorical data was used in this article.

\noindent {\bf Declarations}

\noindent {\bf Conflict of interest} 
The authors have no relevant financial or non-financial interests to disclose.

\bibliographystyle{abbrv}
{\footnotesize\bibliography{library}}

\end{document}